\documentclass[a4paper,11pt]{article}

\usepackage[T1]{fontenc}
\usepackage{lmodern}

\usepackage{dsfont}

\usepackage[latin1]{inputenc}
\usepackage{amsmath}
\usepackage{amsthm}
\usepackage{amssymb}
\usepackage{mathrsfs}
\usepackage{graphicx}
\usepackage[all]{xy}
\usepackage{hyperref}

\usepackage{makeidx}

\usepackage{stmaryrd}
\usepackage{caption}

\usepackage{abstract} 

\usepackage{cleveref}

\usepackage{xcolor}

\usepackage[toc,page]{appendix}

\newtheorem{thm}{Theorem}[section]
\newtheorem{cor}[thm]{Corollary}
\newtheorem{claim}[thm]{Claim}

\newtheorem{lemma}[thm]{Lemma}
\newtheorem{prop}[thm]{Proposition}

\theoremstyle{definition}
\newtheorem{definition}[thm]{Definition}
\newtheorem{ex}[thm]{Example}

\newtheorem{question}[thm]{Question}

\newcommand{\cut}{\textup{cut}^{\delta}}
\newcommand{\sh}{\textup{sh}}

\definecolor{oussamacomment}{rgb}{0.8,0.33,0}

\def\rquotient#1#2{%
	\makeatletter
	\raise.3ex\hbox{$#1$}/\lower.3ex\hbox{$#2$}%
	\makeatother
}	

\makeatletter
\newcommand{\subjclass}[2][2010]{%
	\let\@oldtitle\@title%
	\gdef\@title{\@oldtitle\footnotetext{#1 \emph{Mathematics subject classification.} #2}}%
}
\newcommand{\keywords}[1]{%
	\let\@@oldtitle\@title%
	\gdef\@title{\@@oldtitle\footnotetext{\emph{Key words and phrases.} #1.}}%
}
\makeatother

\newcommand{\Address}{{
		\bigskip
		\small
		
\noindent\textsc{UCLouvain\\ 
Institut de recherche en Math\'ematiques et physique\\
 Chemin du Cyclotron 2\\
1348 Louvain-la-Neuve (Belgium)}\par\nopagebreak
\noindent\textit{E-mail address}: \texttt{oussama.bensaid@uclouvain.be}
  
  	\bigskip
		\small
		
\noindent\textsc{University of Montpellier\\ 
Institut Math\'ematiques Alexander Grothendieck\\
Place Eug\`ene Bataillon\\
34090 Montpellier (France)}\par\nopagebreak
\noindent\textit{E-mail address}: \texttt{anthony.genevois@umontpellier.fr}
  
    \bigskip
		\small
		
\noindent\textsc{University of Paris-Cit\'e\\ 
Institut de Math\'ematiques de Jussieu-Paris Rive Gauche\\
Place Aur\'elie Nemours\\
75013 Paris (France)}\par\nopagebreak
\noindent\textit{E-mail address}: \texttt{romain.tessera@imj-prg.fr}

}}

\makeindex

\title{Coarse separation and splittings in hyperbolic groups}
\date{\today}
\author{Oussama Bensaid, Anthony Genevois, and Romain Tessera}

\subjclass{Primary 20F65. Secondary 20F69.}
\keywords{Hyperbolic groups, graph products, splitting, coarse separation}

\begin{document}

\maketitle

\begin{abstract}
We study coarse separation in one-ended hyperbolic groups from a quantitative point of view, focusing on the volume growth of separating subsets. We prove that a one-ended hyperbolic group that is not virtually a surface group is coarsely separable by a subset of subexponential growth if and only if it splits over a virtually cyclic subgroup. To do so, we show that sufficiently large thickened spheres are hard to cut, in the sense that their cut-sets have exponential size, a result of independent interest. As an application, we obtain a polynomial lower bound on the separation profile of one-ended hyperbolic groups that do not split over a two-ended subgroup. We also apply our criterion to graph products of finite groups, giving a combinatorial characterisation of when such graph products are coarsely separable by a subset of subexponential growth.
\end{abstract}

\tableofcontents

\section{Introduction}

Separation phenomena in groups have long played a central role in geometric group theory, as they often reflect the presence of algebraic splittings. The starting point is Stallings' theorem \cite{MR0415622}: a finitely generated group has more than one end if and only if it splits over a finite subgroup. Equivalently, a Cayley graph of the group can be separated into at least two deep components by a finite subset if and only if the group splits over a finite subgroup. In this sense, a coarse separation property already detects a nontrivial splitting, and hence an algebraic feature that is invariant under quasi-isometry. This philosophy has since been developed in several directions. A first generalisation of this principle was obtained by Dunwoody--Swenson \cite{MR1760752}, who extended the subgroup-theoretic side of Stallings' theorem from finite subgroups to finitely generated virtually polycyclic codimension-one subgroups. In particular, if $G$ is one-ended and not virtually a surface group, then $G$ splits over a two-ended subgroup if and only if it contains an infinite cyclic codimension-one subgroup. Recall that a subgroup $H \leq G$ is \emph{codimension-one} if some finite neighbourhood of a copy of $H$ in a Cayley graph of $G$ separates that Cayley graph into at least two deep components. Another generalisation was later proved by Papasoglu \cite{MR2153400}, who extended the geometric side of Stallings' theorem by replacing finite separating sets with quasi-lines: he showed that a one-ended finitely presented group that is not commensurable to a surface group splits over a two-ended subgroup if and only if its Cayley graph is separated by a quasi-line. In particular, admitting such a splitting is a quasi-isometry invariant. In the same spirit, and following \cite{bensaid2024coarse}, we study coarse separation by families rather than individual subsets. We do so from a quantitative point of view, focusing on the volume growth of the separators rather than on their quasi-isometry type. We prove

\begin{thm}\label{thm:IntroHyp}
Let $G$ be a hyperbolic group. Assume that $G$ is one-ended and not virtually a surface group. Then $G$ is coarsely separable by a family of subexponential growth if and only if it splits over a virtually cyclic subgroup. 
\end{thm}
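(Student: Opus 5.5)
The easy direction is the ``if'' implication. Suppose $G$ splits over a virtually cyclic subgroup $C$, as an amalgam or an HNN extension. Then $C$ is quasi-isometrically embedded (it is virtually $\mathbb{Z}$, hence quasiconvex in the hyperbolic group $G$), and it has linear growth. From the Bass--Serre tree of the splitting we get that a bounded neighbourhood of $C$, or of a coset $gC$, separates the Cayley graph into at least two deep components. Since $G$ is one-ended, $C$ must be infinite, so the halfspaces on each side of an edge of the Bass--Serre tree are unbounded. To turn this into a \emph{family} separating coarsely in the sense of \cite{bensaid2024coarse}, we use the cosets $\{gC\}$, or uniformly thickened copies of them. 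These have uniformly linear, hence subexponential, growth, and they separate arbitrarily large balls into pieces each containing points far from the separator. Checking the quantitative ``deep components'' condition is routine from the tree.

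For the ``only if'' direction we argue by contraposition. Assume $G$ is one-ended, hyperbolic, not virtually a surface group, and does not split over a virtually cyclic subgroup. We must show that every family coarsely separating $G$ has exponential growth. The strategy announced in the abstract is to show that large thickened spheres are hard to cut. We first use the Bowditch JSJ theory: since $G$ has no such splitting, the boundary $\partial G$ has no local cut points. By Bowditch/Swarup, $\partial G$ is also connected and locally connected with no cut points. If $\partial G$ were a circle, $G$ would be virtually Fuchsian, which is excluded. So $\partial G$ is a Peano continuum with no local cut points, not homeomorphic to $S^1$.

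The key intermediate statement is then the following. There are constants $\lambda>1$ and $c>0$ such that for every large $R$, any subset $Z$ of the thickened sphere $S_R$ (an annulus of fixed width at radius $R$) that cuts $S_R$ into two pieces, each of size at least a fixed fraction of $|S_R|$, satisfies $|Z|\ge c\lambda^{R}$. To prove it, we transfer the topological property of $\partial G$ to the spheres. Points of $S_R$ approximate $\partial G$ at scale $e^{-\epsilon R}$ in a visual metric. The absence of local cut points, combined with compactness, gives a uniform statement: any two disjoint ``large'' open sets in $\partial G$ are joined by many disjoint paths. More precisely, separating two macroscopic regions requires a separator that meets a definite proportion of all small visual balls along some path family. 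Scaling this up via the group action, we obtain a fattened ``transversal'' structure that forces $Z$ to contain exponentially many points.

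The plan here is a Menger-type argument. Using no local cut points, we build a family of paths in $S_R$ between the two large pieces with bounded overlap, of cardinality exponential in $R$. We do this self-similarly: at each scale, we replace each path by several disjoint subpaths, with the branching supplied by the no-cut-point property. Then $Z$ must hit each path. This step is the main obstacle, in particular obtaining uniform constants across scales and handling the non-$S^1$ assumption, which is exactly what provides the branching.

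Finally, we deduce the theorem. Suppose a family of subsets $Z_n$ coarsely separates $G$. Choose a ball of radius $R_n$ in which $Z_n$ separates two deep components. Intersecting with a thickened sphere around the center at radius about $R_n/2$ shows that $Z_n$ cuts that sphere into two large pieces, because the deep components reach past the sphere. The hard-cut estimate then gives $|Z_n\cap B(R_n)|\ge c\lambda^{R_n/2}$. This contradicts subexponential growth of the family.
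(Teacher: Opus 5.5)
Your ``if'' direction and your overall architecture (thickened spheres are hard to cut, so every coarsely separating family has exponential growth) agree with the paper. However, the two steps of the ``only if'' direction that carry the content are not supplied.

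First, the key estimate is never proved. You give no argument that absence of local cut points plus compactness produces a self-similar branching of paths with bounded overlap and constants uniform across scales, and you flag this yourself as the main obstacle. Your sketch also presupposes that the two pieces of $S_R^{+t}\setminus Z$ correspond to ``macroscopic'' open sets of $\partial G$. In fact they are unions of components that depend on $Z$, and although each has proportion at least $\delta'$, they may contain no large ball at all. The paper handles this by a volume count. If a piece lies in $Z^{+R/2}$, then $\delta'|S_R^{+t}|\le |Z|\,|B(R/2)|$, so $|Z|\gtrsim e^{\alpha R/2}$. Otherwise each piece contains a point at distance $>R/2$ from $Z$. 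Pulling boundary paths back to $d_0$-paths on $S_R$ then shows that $\sh(Z)$ separates the shadows of these two points, and these shadows lie at visual distance $r_0$ from $\sh(Z)$ with $r_0a^{R}$ exponentially large in $R$. The hypothesis ``no splitting over a two-ended subgroup'' enters quantitatively through the Lazarovich--Margolis--Mj theorem \cite{lazarovich2024commensurated}: a minimal closed separator $Y\subseteq\sh(Z)$ meets every annulus $\bar B_\rho(\zeta,r)\setminus B_\rho(\zeta,\lambda r)$ with $\zeta\in Y$ and $r<r_0$. Iterating gives $2^p$ points of $Y$ that are $\tfrac12\lambda^{2p}r_0$-separated. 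These pull back to distinct points of $Z$ as long as $\lambda^{2p}r_0\gtrsim a^{-R}$, that is, for $p$ of order $R$. This counts points of the separator directly. Your Menger-dual version would need an equally quantitative, scale-uniform statement about disjoint paths in $\partial G$ between the two pieces, and you do not supply one.

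Second, your final deduction fails as written. Knowing that two deep components of $G\setminus Z^{+L}$ ``reach past'' a sphere only shows that each meets it, not that each occupies a definite proportion of it. One of them can meet the sphere in a negligible set, in which case $Z^{+L}$ does not induce a $\delta$-cut and the hard-cut estimate gives nothing. The paper finds a balanced sphere by a continuity argument based on persistence:
\begin{itemize}
\item Pick $x,y$ far from $Z$ in different components, so that thickened spheres of suitably small radius centred at $x$ and at $y$ lie in different components.
\item Move the centre along a path from $x$ to $y$.
\item Thickened spheres with neighbouring centres share a proportion $\alpha=1/|B(t)|$ of their points. So if $Z^{+L}$ were never a $(1-\alpha/2)$-cut along the way, the component containing the dominant piece could never change, which is a contradiction.
\end{itemize}
This is \cite[Theorem~3.5]{bensaid2024coarse} applied to the persistent family $\{S(g,n)^{+t}\}$. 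It is an essential step, not a formality.
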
 

We refer to \Cref{subsec:coarse_sep_cut} for the definitions of coarse separation and of the volume growth of a family of subsets. The exclusion of virtually surface groups is necessary. Indeed, virtually surface groups are quasi-isometric to $\mathbb H^2$, so they are coarsely separable by subsets of linear growth, for instance by thickened geodesics. However, there exist hyperbolic triangle groups that are virtually surface groups but do not split over two-ended subgroups \cite{BowditchCut}. We also note that, in the hyperbolic setting, the previous discussion has a boundary counterpart: Bowditch \cite{BowditchCut} showed that splittings over two-ended subgroups are detected by local cut points in the boundary. This point of view will be essential in the proof.

\paragraph{Coarse separation and splittings.} Theorem~\ref{thm:IntroHyp} exhibits a form of rigidity, proving that, for one-ended hyperbolic groups that are not virtually surface groups, coarse separation by a family of subexponential growth can occur only when there is a strong form of algebraic separation, namely a splitting over a virtually cyclic subgroup. In a forthcoming article \cite{SepRAAGs}, we will highlight a similar phenomenon for right-angled Artin groups by proving that a right-angled Artin group is coarsely separable by a family of subexponential growth if and only if it splits over an abelian subgroup. Then, it is natural to wonder to which extent this observation is common among finitely generated groups.

\begin{question}\label{question:IntroVague}
Under which reasonable assumptions does a finitely generated group coarsely separable by a family of subexponential growth (virtually) split over a subgroup of subexponential growth?
\end{question}

\noindent
In view of Theorem~\ref{thm:IntroHyp} and \cite{SepRAAGs}, it is reasonable to expect positive answers to the following questions:

\begin{question}
If a relatively hyperbolic group is coarsely separable by a family of subexponential growth, does it virtually split over a virtually cyclic subgroup? 
\end{question}

\begin{question}
If a right-angled Coxeter group, or more generally a virtually cocompact special group, is coarsely separable by a family of subexponential growth, does it virtually split over a virtually abelian subgroup?
\end{question}

\noindent
Question~\ref{question:IntroVague} can be decomposed into two subquestions. First, one can ask whether the geometric separation can occur through some subgroup. A priori, there is no reason for a coarsely separating family to be close in any sense to a subgroup. In fact, one can always enrich a coarsely separating family in order to make it as different as we want from a subgroup. Nevertheless, we can ask: if a given finitely generated $G$ is coarsely separable by a family of subexponential growth, can we find a codimension-one subgroup $H \leq G$ of subexponential growth? Note that one may also define a codimension-one subgroup $H \leq G$ as a subgroup such that the Schreier graphs $\mathrm{Sch}(G,H)$, constructed from finite generating sets of $G$, are multi-ended. One also says that $G$ \emph{semi-splits over $H$}.

\medskip \noindent
A degenerate case of our question deals with finitely generated groups that have subexponential growth themselves. Can we always find codimension-one subgroups in such groups? Recall from \cite{MR1347406,MR1459140} that a finitely generated group contains a codimension-one subgroup if and only if it admits an action on a median graph (or equivalently, a CAT(0) cube complex) with unbounded orbits. When such an action does not exist, one says that the group satisfies \emph{Property (FW)}. 

\begin{question}
Does there exist a group of intermediate growth satisfying (FW)?
\end{question}

\noindent
According to \cite{MR1459140}, groups satisfying Kazhdan's Property (T) provide a source of examples of groups satisfying (FW), and in fact the almost exclusive source of such examples. But they are of no help here, since groups of subexponential growth are amenable and that the only amenable groups satisfying (T) are the finite groups. 

\medskip \noindent
Our second subquestion deals with a problem that had attracted a lot of attention a couple of dedaces ago: assuming that a finitely generated group $G$ semi-splits over a subgroup $H$, can we find a (virtual) splitting of $G$ over a subgroup related to $H$? We refer the reader to the survey \cite{MR2031877} for more information on this general problem. Here, we are interested in the case where $H$ has subexponential growth. Notice that, even under this restriction, a positive answer to the question is not reasonable. For instance, it is known that the Grigorchuk group admits codimension-one subgroups, necessarily of subexponential growth, but, since the Grigorchuk group is torsion, there is no (virtual) splitting what so ever. On the other hand, it is worth mentioning that, for polynomial growth (i.e.\ virtually nilpotent groups), the problem is rather well-understood thanks to the algebraic torus theorem \cite{MR1760752} (which, more generally, deals with semi-splittings over polycyclic subgroups). Since the situation seems to be more tame for polynomial growth, it is then natural to ask:

\begin{question}
Does a finitely generated group coarsely separable by a family of polynomial growth virtually split over a subgroup of polynomial growth?
\end{question}

\noindent
In fact, in view of Stalling's theorem about the number of ends and of Papasoglu's theorem \cite{MR2153400} about coarse separation by quasi-lines, it is even more natural to ask:

\begin{question}
Does a finitely generated group coarsely separable by an $n$-dimensional quasiflat virtually split over a virtually abelian subgroup of rank $n$?
\end{question}

\paragraph{Strategy.} In fact, we show that if $G$ is a one-ended hyperbolic group that is not virtually a surface group and does not split over a virtually cyclic subgroup, then there exists a constant $D \geq 0$ such that the $D$-neighbourhoods of all sufficiently large spheres are ``hard to cut'', in the sense that their cut sets have uniformly exponential growth, see \Cref{subsec:coarse_sep_cut} for a definition of the cut of a subgraph. Our main result, from which \Cref{thm:IntroHyp} follows, is the following theorem, which is of independent interest.
\begin{thm}\label{thm:cut_spheres_exp}
    Let $G$ be a one-ended hyperbolic group that is not virtually a surface group and does not split over any two-ended subgroup. Then there exists $t_0 \ge 1$ such that, for any $t \ge t_0$ and any $\varepsilon \in (0,1)$, there exist constants $\lambda > 0$ and $C > 0$ such that the following holds: for every $g \in G$ and every sufficiently large integer $n$,
    $$
        \textup{cut}^\varepsilon\left(S_G(g,n)^{+t}\right) \ge C e^{\lambda n}.
    $$
\end{thm}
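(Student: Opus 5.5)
We argue by contradiction and pass to the boundary, using Bowditch's characterisation of splittings over two-ended subgroups. Since $G$ is one-ended, hyperbolic, not virtually a surface group and has no splitting over a two-ended subgroup, \cite{BowditchCut} shows that $\partial G$ is connected, locally connected, has no cut points, and has no local cut points. Concretely, removing a point never disconnects a connected neighbourhood ``locally''.

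The first step is to quantify this into a uniform statement at a fixed scale. There is a $\delta_0>0$ with the following property: for every $\xi\in\partial G$ and every small closed set $K\subset\partial G$ whose visual diameter is at most $\delta_0$, any two points of $\partial G\setminus N_{\delta_0}(K)$ that lie in a common ball of radius $r_0$ can be joined by a path in $\partial G\setminus K$. The proof uses compactness of $\partial G$ together with the absence of local cut points. In fact, to get exponential rather than merely unbounded cut sets, I would establish a stronger ``no small separating sets'' statement: a set separating $\partial G$ (or a large ball in it) into two pieces, each of diameter at least $\varepsilon$, must have visual diameter bounded below. This means it cannot be covered by few small balls of size $e^{-n}$. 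Ideally the number of such balls needed grows like $e^{\lambda n}$.

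The second step transfers this to spheres by cocompactness. Choose $t_0$ large, depending on the hyperbolicity constant, so that the thickened sphere $S_G(g,n)^{+t}$ is connected. It is then coarsely a discretisation at scale $e^{-n}$ of $\partial G$ in the visual metric based at $g$: each point of the sphere corresponds to a shadow of visual size $\asymp e^{-n}$, and adjacent vertices correspond to overlapping shadows. By translating, we may assume $g=1$.

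Now suppose a set $W$ of vertices cuts $S(1,n)^{+t}$ into pieces, each containing at least an $\varepsilon$-proportion of the vertices. Since the shadow map is roughly measure-comparable (by Patterson--Sullivan/Ahlfors regularity), each piece covers a definite proportion of $\partial G$. The shadows of $W$ then form a closed set $K_W$ separating these parts of the boundary, up to a thickening at scale $e^{-n}$. Thus $K_W$ is a union of $|W|$ balls of radius $\asymp e^{-n}$ that separates $\partial G$ into pieces of definite measure.

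The heart of the argument, and the main obstacle, is showing that such a separating set needs exponentially many balls. My plan is to use a self-similarity/induction argument. Consider $K_W$ at scale $e^{-n}$. It is a separating ``wall'', and its restriction to a ball at intermediate scale $e^{-m}$ still separates that ball, by the uniform local connectivity of the first step. Rescaling by a group element identifies that ball with $\partial G$ again, via cocompactness of the action on triples. Because the boundary has no local cut points, the wall cannot be localised near a single point. It must therefore cross at least two disjoint sub-balls of scale $e^{-(m+k)}$, for some fixed $k$, and in each of them it separates again. Iterating $n/k$ times gives $|W|\ge 2^{n/k}$, hence the bound $Ce^{\lambda n}$ with $\lambda=\log 2/k$.

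The delicate parts are these:
\begin{itemize}
\item making ``separates a ball into two big parts'' persist under the recursion, which needs a uniform version of the no-local-cut-point property at all scales;
\item controlling the constants that depend on $\varepsilon$ and $t$, since the $\varepsilon$-proportion must be converted into a uniform scale at which the recursion starts.
\end{itemize}
Note that this branching argument, in which separators cannot localise near a point, is exactly what fails for surface groups, where circles separate with linear size.
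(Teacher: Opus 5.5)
Your architecture matches the paper's. You find two points in different components of $S_n^{+t}\setminus Z_n$ far from the cut, and push to $\partial G$ via shadows so that $\sh(Z_n)$ separates their shadows. You then produce exponentially many well-separated points of the boundary separator by branching over scales, and pull them back to distinct points of $Z_n$.

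The gap is in the step you yourself call the heart of the argument, and it is more than bookkeeping. Your recursion needs the wall, restricted to each sub-ball, to again ``separate that ball into two big parts'', so that rescaling reproduces the initial configuration. Nothing you state guarantees this, and it can fail. The complementary components of a separator may have long thin fingers (already for Jordan curves in $S^2=\partial\mathbb{H}^3$). A sub-ball centred on the wall may then see one side only within a tiny fraction of its radius from the wall. After rescaling there is no quantitative separation left to which the absence of local cut points could be applied, so the recursion does not close. Similarly, your Step~1 inference, from a lower bound on the diameter of a separator to a lower bound on the number of $a^{-n}$-balls needed to cover it, is only valid for connected separators. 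Separators in $\partial G$, even minimal ones, need not be connected.

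The missing idea is to pass to a \emph{minimal} closed subset $Y\subseteq\sh(Z_n)$ separating the two boundary points $\xi_n,\eta_n$, and to use the uniform annulus property of such sets. This is Theorem~\ref{thm:LMM}, imported from Lazarovich--Margolis--Mj: there is $\lambda\in(0,1)$, depending only on $(\partial G,\rho)$, with $Y\cap(\bar B_\rho(\zeta,r)\setminus B_\rho(\zeta,\lambda r))\neq\emptyset$ for every $\zeta\in Y$ and every $0<r<\rho(\{\xi_n,\eta_n\},Y)$. This is exactly the scale-invariant no-local-cut-point statement you were reaching for. Because it holds relative to the fixed pair $\xi_n,\eta_n$ at every point of $Y$, no local separation has to persist. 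Picking a new point of $Y$ in an annulus around each existing point yields $2^p$ points that are $\lambda^{2p}r$-separated.

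Your other open point, the starting scale, need not be uniform in $n$. Note first that an $\varepsilon$-cut bounds components from \emph{above}, so your two big pieces must be unions of components, as in Lemma~\ref{lem:partition_connected_comp}. Using this and a volume count, Proposition~\ref{prop:big_spheres_farfrom_cuts} gives one of two outcomes. Either $|Z_n|\ge\Theta e^{\alpha n/2}$, or there are points at graph distance $>n/2$ from $Z_n$ in different components. In the second case $\rho(\{\xi_n,\eta_n\},\sh(Z_n))\gtrsim a^{-n/2}$, which leaves $p\approx\tau n$ levels before Lemma~\ref{lem:separated_family_boundary} stops producing distinct points of $Z_n$. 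Your Patterson--Sullivan count could give a comparable $n$-dependent scale, but it would have to be carried out.

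Finally, the fact that $\sh(Z_n)$ separates the two shadows at all requires Proposition~\ref{prop:shadows_separated_by_shadows}. Its proof uses the one-endedness-based distortion bounds of Proposition~\ref{prop:distortion_of_spheres} to make the two shadows disjoint before pulling boundary paths back to $d_0$-paths in $S_n$.
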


A rough outline of the proof is as follows. First, note that, since $G$ is one-ended, a thickened sphere $S_G(g,n)^{+t}$ is a connected subgraph of $G$, for $t$ large enough. Let $Z_n \subset S_G(g,n)^{+t}$ be an $\varepsilon$-cut, and suppose towards contradiction that $Z_n$ is ``small''. A first step shows that $S_G(g,n)^{+t}\setminus Z_n$ must contain two connected components with points $x_n,y_n$ lying very far from $Z_n$. We then pass from the sphere to the boundary by means of shadows: the shadows $\sh(x_n)$ and $\sh(y_n)$ lie in different connected components of $\partial G \setminus \sh(Z_n)$, so that $\sh(Z_n)$ separates the boundary. Moreover, they lie ``far'' from $\sh(Z_n)$. We then use a result of Lazarovich--Margolis--Mj \cite{lazarovich2024commensurated}, see \Cref{thm:LMM}, which provides, inside any minimal closed separator of the boundary, a uniform density property at all small scales. By iterating this density property, we obtain exponentially many well-separated points in $\sh(Z_n)$. Because they are well separated, we can pull this separated family back to a separated family inside $Z_n$, forcing $Z_n$ itself to be ``big'', a contradiction.

\Cref{thm:IntroHyp} follows from \Cref{thm:cut_spheres_exp} by a result from \cite{bensaid2024coarse}, see \Cref{thm:persist_family_expcut_coarse_sep}. The idea is as follows. First, thickened spheres form a \emph{persistent} family, in the sense that two neighbouring thickened spheres intersect in a positive proportion. Therefore, if two thickened spheres are separated by some subset, then, by moving from one sphere to the other through neighbouring spheres, one finds a thickened sphere whose intersection with the separating subset is a cut-set. Since such cut-sets have exponential size by \Cref{thm:cut_spheres_exp}, it follows that the separating subset itself must have exponential growth.

\paragraph{Applications of our techniques.}
As a consequence of \Cref{thm:cut_spheres_exp}, such hyperbolic groups have a separation profile that is bounded below by a polynomial:

\begin{thm}\label{thm:Intro_sep_profile_hyp_groups}
    Let $G$ be a one-ended hyperbolic group that is not virtually a surface group. If $G$ does not split over any two-ended subgroup, then there exist constants $C > 0$ and $\varepsilon > 0$ such that, for every $n \in \mathbb{N}$,
    $$
        \textup{sep}_G(n) \ge C n^\varepsilon.
    $$
\end{thm}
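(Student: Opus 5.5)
The plan is to derive the polynomial lower bound on $\textup{sep}_G$ directly from \Cref{thm:cut_spheres_exp}, using the thickened spheres themselves as test subgraphs. Recall that $\textup{sep}_G(N)$ is the supremum, over subgraphs $\Gamma \subset G$ with at most $N$ vertices, of the minimal size of a set $Z\subset\Gamma$ such that every component of $\Gamma\setminus Z$ has at most $|\Gamma|/2$ vertices. This minimal size is, up to the choice of $\varepsilon=1/2$ and a harmless change of constants, the quantity $\textup{cut}^{1/2}(\Gamma)$.

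\textbf{Step 1: fix the test subgraphs.} Fix $t\ge t_0$ as in \Cref{thm:cut_spheres_exp}, and fix $\varepsilon=1/2$, or whichever $\varepsilon$ matches the convention in the definition of $\textup{sep}$. The theorem then provides $\lambda,C>0$ such that, for all $n$ large enough,
$$\textup{cut}^{\varepsilon}\bigl(S_G(e,n)^{+t}\bigr)\ge Ce^{\lambda n}.$$

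\textbf{Step 2: bound the size of the test subgraphs.} Since $G$ is a finitely generated group, balls grow at most exponentially. Hence
$$|S_G(e,n)^{+t}|\le |B_G(e,n+t)|\le K a^{n}$$
for some constants $K\ge 1$ and $a>1$ depending on $t$ and the generating set.

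\textbf{Step 3: compare the two bounds.} Setting $N_n=|S_G(e,n)^{+t}|$, we get
$$\textup{sep}_G(N_n)\ge Ce^{\lambda n}\ge C'N_n^{\lambda/\log a}.$$
So the desired bound holds along the sequence $N_n$.

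\textbf{Step 4: pass from the sequence to all $N$.} The function $\textup{sep}_G$ is nondecreasing. So for arbitrary $N$ we choose the largest $n$ with $N_n\le N$, which gives $\textup{sep}_G(N)\ge \textup{sep}_G(N_n)$. We need $N_{n+1}\le K'N_n$ to conclude $N_n\ge N/K'$.
\begin{itemize}
\item For the upper side, $N_{n+1}\le |B(e,n+1+t)|\le a^{2t+1}|B(e,n)|$.
\item For the lower side, we need $N_n\ge c|B(e,n)|$. This follows in a hyperbolic group because a definite proportion of any ball lies within bounded distance of its boundary sphere (exponential growth of spheres, Coornaert-type estimates $|S(e,n)|\asymp e^{hn}$).
\end{itemize}
Alternatively, and more simply, we can use Coornaert's estimate $|B(e,n)|\asymp e^{hn}$ directly for both the upper and lower bounds. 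Small $N$, below the threshold where Step 1 applies, are absorbed by shrinking $C$, since $\textup{sep}_G\ge 1$ for $N\ge 2$ in a connected graph, and $\textup{sep}_G(0)$ or $\textup{sep}_G(1)$ can be handled by convention.

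\textbf{Main obstacle.} I expect the only delicate point to be matching the definition of $\textup{cut}^\varepsilon$ with the definition of $\textup{sep}$. If $\textup{cut}^\varepsilon$ requires components of size at most $\varepsilon|\Gamma|$ and $\textup{sep}$ uses $1/2$, I would simply take $\varepsilon=1/2$. If the conventions differ by a bounded factor, the standard comparison between $\varepsilon$-cuts for different $\varepsilon$ costs only a multiplicative constant together with an iteration argument; here it is easier to just choose the matching $\varepsilon$, which the theorem allows since it holds for all $\varepsilon\in(0,1)$.
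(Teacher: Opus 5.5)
Your proposal is correct and follows the route the paper intends: the paper presents this result as a direct consequence of Theorem~\ref{thm:cut_spheres_exp}, applied with $\varepsilon = 1/2$ to the thickened spheres $S_n^{+t}$, whose size is at most exponential in $n$, together with the monotonicity of $\textup{sep}_G$. Your Step 4 does not actually need the lower bound $|S_n^{+t}| \ge c\,|B(e,n)|$: taking $n$ maximal with $|B(e,n+t)| \le N$ already gives $n \gtrsim \log N$, and hence $\textup{sep}_G(N) \ge C e^{\lambda n} \gtrsim N^{\varepsilon}$.
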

We refer to \Cref{subsec:coarse_sep_cut} for the definition of the separation profile of a group. We note that this result was also obtained recently by Hume--Mackay \cite[Theorem~1.11]{arXiv:2511.10469} using different methods: 
instead of proving that spheres are hard to cut, they use techniques
from \cite{MR2667133} to embed ``sufficiently thick'' round trees.

\medskip\noindent
Another application of \Cref{thm:IntroHyp} concerns graph products of finite groups. Our main result in this direction is the following.

\begin{thm}\label{thm:GPsubsep}
Let $\Gamma$ be a finite $\square$-free graph and $\mathcal{G}$ a collection of finite groups indexed by $V(\Gamma)$. The graph product $\Gamma \mathcal{G}$ is coarsely separable by a family of subexponential growth if and only if $\Gamma$ contains a separating subgraph that can be written as a join $A \ast B$, where $A$ is a complete graph and $B$ is either empty or consists of two non-adjacent vertices labelled by $\mathbb{Z}_2$. 
\end{thm}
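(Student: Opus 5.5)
The plan is to reduce Theorem~\ref{thm:GPsubsep} to Theorem~\ref{thm:IntroHyp}, using the structure theory of graph products of finite groups. Since $\Gamma$ is $\square$-free, $\Gamma\mathcal{G}$ is hyperbolic; this is the standard criterion that a graph product of finite groups is hyperbolic iff $\Gamma$ has no induced square. We then treat two cases.

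\emph{Case 1: $\Gamma\mathcal{G}$ is not one-ended.} A graph product of finite groups is finite iff $\Gamma$ is complete. It has infinitely many ends iff $\Gamma$ is disconnected or contains a separating complete subgraph. In that case $\Gamma$ has a separating join $A\ast\emptyset$ with $A$ complete. Conversely, a multi-ended group is coarsely separated by a finite set, which has subexponential growth. If $\Gamma$ is complete the group is finite; here I expect both sides to fail by the conventions of \Cref{subsec:coarse_sep_cut}, and I would check that the definitions agree on this case. So Case 1 reduces to recalling the ends criterion, essentially Stallings' theorem applied to the visible amalgam $\Gamma\mathcal{G}=\langle\Gamma_1\rangle\ast_{\langle\Lambda\rangle}\langle\Gamma_2\rangle$ when $\Lambda$ separates $\Gamma$.

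\emph{Case 2: $\Gamma\mathcal{G}$ is one-ended.} We must first handle virtually surface groups. These are coarsely separable by subexponential families, namely thickened geodesics. So we need that a one-ended virtually surface graph product of finite groups always has a separating subgraph of the stated form. I would argue this via Bowditch/planarity: the boundary is a circle, and one can show that $\Gamma$ is then a cycle of length $\ge5$ with $\mathbb{Z}_2$ labels, possibly joined with a complete graph, coned by a finite normal subgroup. In such a cycle, two non-adjacent vertices together with the complete part separate. This identification is a point to verify carefully; Davis' description of the Davis complex as a surface when links are circles should give it.

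Otherwise, by Theorem~\ref{thm:IntroHyp}, it suffices to show that $\Gamma\mathcal{G}$ splits over a virtually cyclic subgroup iff $\Gamma$ contains a separating $A\ast B$ as in the statement. \emph{($\Leftarrow$)} If $\Lambda=A\ast B$ separates $\Gamma$, the visible splitting is over $\langle\Lambda\rangle=\prod_{A}G_v\times(\mathbb{Z}_2\ast\mathbb{Z}_2)$, which is virtually $\mathbb{Z}$ (or finite if $B=\emptyset$, excluded by one-endedness). \emph{($\Rightarrow$)} This is the main obstacle. If the group splits over a two-ended subgroup, I would use Bowditch's JSJ/local cut point characterisation. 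Alternatively, I would use the known fact for graph products of finite groups (as for Coxeter groups, via Mihalik--Tschantz style ``visual decomposition'' results) that splittings over a subgroup of a given class can be made visual: any splitting over a subgroup $H$ yields a separating subgraph $\Lambda$ with $\langle\Lambda\rangle$ conjugate into a subgroup commensurable with a subgroup of $H$. Here $H$ is virtually cyclic, so $\langle\Lambda\rangle$ is virtually cyclic, and in fact infinite because the group is one-ended.

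It then remains to classify the subgraphs $\Lambda$ with $\langle\Lambda\rangle$ infinite virtually cyclic. The group $\langle\Lambda\rangle$ is virtually $\mathbb{Z}$, so it has no free subgroup of rank $2$. Two non-adjacent vertices generate $G_u\ast G_v$, which is virtually free of rank $\ge 2$ unless both are $\mathbb{Z}_2$. Three pairwise non-adjacent vertices, or two disjoint non-edges forming a square, would produce too large a group; the latter configuration is excluded anyway by $\square$-freeness. Hence the complement graph of $\Lambda$ has exactly one edge, with both endpoints labelled $\mathbb{Z}_2$, so $\Lambda=A\ast B$ as claimed. The step I expect to need the most work is the visualisation of two-ended splittings. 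I would try first to adapt the Mihalik--Tschantz argument, acting on the Bass--Serre tree and using that finite vertex groups fix vertices.
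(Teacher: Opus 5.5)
Your skeleton matches the paper's. You get hyperbolicity from $\square$-freeness, treat the multi-ended and virtually-surface cases separately, and then combine Theorem~\ref{thm:IntroHyp} with a dictionary between virtually cyclic splittings and separating subgraphs.

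\textbf{The gap is the virtually-surface case.} You need the following: if $\Gamma\mathcal{G}$ is one-ended and virtually a surface group, then $\Gamma$ contains a separating $A\ast B$. The tool you invoke (the Davis complex is a surface when all links are circles) proves the converse, namely that $\mathbb{Z}_2$-labelled cycles give virtual surface groups. The step ``the boundary is a circle, so $\Gamma$ is a $\mathbb{Z}_2$-labelled cycle'' is asserted with no mechanism. The real difficulty is an induced cycle carrying a vertex-group of order $\geq 3$. The associated complex then branches, but a branching complex can still be quasi-isometric to $\mathbb{H}^2$. So you need an argument that the group itself is not virtually Fuchsian.

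The paper supplies this in Proposition~\ref{prop:NotVirtSurface}. In the one-ended case $\Gamma$ is not chordal, since otherwise Dirac's theorem gives a separating clique and hence a splitting over a finite group. By $\square$-freeness, $\Gamma$ then has an induced cycle $\Xi$ of length $\geq 5$.
\begin{itemize}
\item If $\Xi$ is $\mathbb{Z}_2$-labelled, $\langle\Xi\rangle$ is one-ended, so it cannot be an infinite-index (hence free) subgroup of a virtual surface group. Lemma~\ref{lem:GPwhenFI} then forces $\Gamma$ to be $\Xi$ joined with a finite clique, which visibly contains a separating $A\ast B$.
\item If some label on $\Xi$ has order $\geq 3$, Proposition~\ref{prop:BiLipEmbed} quasi-isometrically embeds $\Xi\mathcal{H}$ (one $\mathbb{Z}_3$, all other labels $\mathbb{Z}_2$) into $\Gamma\mathcal{G}$. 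Here $\Xi\mathcal{H}$ is one-ended hyperbolic but not virtually a surface group: it contains a right-angled Coxeter group $\Theta(2)$ that has an infinite-index one-ended subgroup. A boundary argument then rules out such an embedding into $\mathbb{H}^2$.
\end{itemize}
Some argument of this kind is indispensable, and your plan does not contain one.

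Three smaller points.
\begin{itemize}
\item The visualisation step you single out as the main obstacle is actually short (Proposition~\ref{prop:RelativeSplitting}). Each finite $G_u$ fixes a subtree $T_u$ of the Bass--Serre tree, and adjacent vertices give $T_u\cap T_v\neq\emptyset$. The Helly property then produces $u,v$ with $T_u\cap T_v=\emptyset$. For an edge $e$ on the bridge between them, the vertices whose groups fix $e$ form a separating $\Lambda$ with $\langle\Lambda\rangle\leq\mathrm{Stab}(e)$, a conjugate of $H$. No commensurability is needed, and the same argument gives the multi-ended criterion you use in Case~1.
\item In your classification of virtually cyclic $\langle\Lambda\rangle$ you omit one configuration: a vertex $u$ non-adjacent to both endpoints of an edge $vw$ of $\Lambda$. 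It is excluded because $G_u\ast(G_v\times G_w)$ is not virtually cyclic, so your conclusion still holds.
\item In Case~1, ``infinitely many ends'' should read ``more than one end''; think of $\mathbb{Z}_2\ast\mathbb{Z}_2$.
\end{itemize}
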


A brief outline of the proof is as follows. First, we determine exactly when a graph product is virtually cyclic or virtually a surface group. Then, we characterise splittings over virtually cyclic subgroups in terms of separating subgraphs of the defining graph. Finally, we combine this characterisation with \Cref{thm:IntroHyp} and with the classification of virtually cyclic graph products to obtain the theorem.

\subsection*{Acknowledgements}
The first-named author acknowledges support from the FWO and F.R.S.-FNRS under the Excellence of Science (EOS) programme (project ID 40007542).

\section{Preliminaries}

If $(X,d)$ is a metric space, we denote by $B(x,r)$ (resp.\ $S(x,r)$) the closed ball (resp.\ the sphere) of radius $r$:
$$
B(x,r) := \{ z \in X \mid d(x,z) \leq r \}, \qquad
S(x,r) := \{ z \in X \mid d(x,z) = r \}.
$$
If $A \subseteq X$ is a subset, we denote by $|A| \in \mathbb{N} \cup \{+\infty\}$ its cardinality, and by $A^{+r}$ its $r$-neighbourhood:
$$
A^{+r} := \{ x \in X \mid d(x,A) \leq r \}.
$$

\subsection{Coarse separation and cuts}\label{subsec:coarse_sep_cut}

We start by recalling some definitions from \cite{bensaid2024coarse}. 
Since we will only work with graphs in this paper, we restrict all definitions to graphs. 
We begin with the central notion of coarse separation.

\begin{definition}\label{def:coarse_sep}
Let $X$ be a graph and $\mathcal{Z}$ a collection of subgraphs. A connected subgraph $Y \subset X$ is \emph{coarsely separated by $\mathcal{Z}$} if there exists $L \geq 0$ such that for every $D \geq 0$, there is some $Z \in \mathcal{Z}$ such that $Y \setminus Z^{+L}$ has at least two connected components with points at distance $\geq D$ from $Z$. We will call $L$ the \emph{thickening constant} of $\mathcal{Z}$.
\end{definition}

\begin{definition}\label{def:growth_separating_family}
    Let $X$ be a graph of bounded degree, and let $\mathcal S$ be a family of subgraphs of $X$. We define its growth by
    $$
       V_{\mathcal{S}}(r) := \sup_{s\in Y,\ Y\in \mathcal S}|B_X(s,r)\cap Y|.
    $$
    We say that $\mathcal S$ has \emph{exponential volume growth} if
    $$
        \limsup_{r\to\infty} \frac{1}{r}\log V_{\mathcal S}(r) > 0.
    $$
    Otherwise, we say that $\mathcal S$ has \emph{subexponential volume growth}. We denote by $\mathfrak{M}_{\mathrm{exp}}$ the class of bounded degree graphs such that any coarsely separating family of subgraphs must have exponential volume growth.
\end{definition}

The following definitions of a ``cut'' of a subgraph and of the separation profile of a graph are due to Benjamini--Schramm--Tim{\'a}r \cite{benjamini2012separation} and quantify the connectivity of the graph.

\begin{definition}\label{def:cut}
    Let $X$ be a connected graph of bounded degree, and let $\delta \in (0,1)$. Let $A \subset X$ be a finite subgraph, and let $S \subset A$. Then $S$ is a \emph{$\delta$-cut} of $A$ if every connected component of $A \setminus S$ has size $\leq \delta |A|$. We denote by $\cut(A)$ the minimal size of a $\delta$-cut of $A$. The \emph{separation profile} of $X$ is the function
    $$
        \textup{sep}_X(n) := \sup\{\,\textup{cut}^{\frac{1}{2}}(S) : S \subset X,\ |S| \le n\,\},
        \qquad n \in \mathbb N.
    $$
\end{definition}
In the definition of the separation profile, the constant $1/2$ can be
replaced by any $\delta \in (0,1)$ without changing the function up to the
following equivalence: two functions $f,g : \mathbb{N} \to [0,\infty)$ are
said to be equivalent if there exists a constant $C \ge 1$ such that
$$
    \frac{1}{C} g(n)  \le f(n) \le C g(n) 
    \quad \text{for all } n \in \mathbb{N}.
$$

\begin{lemma}\label{lem:partition_connected_comp}
        Let $X$ be a connected graph of bounded degree, and $\delta \in (0,1)$. Set $\delta' = \min (\frac{\delta}{4}, \frac{1-\delta}{4})$. Then for any subset $E \subset X$ and any $\delta$-cut $S \subset E$ of $E$, one of the following holds
        \begin{itemize}
            \item [(1)] $|S| \geq \delta' |E|;$
            \item [(2)] $E \setminus S$ can be partitioned 
        $$ E \setminus S = A \sqcup B,$$
        where each of $A$ and $B$ is a union of connected components of $E \setminus S$, $|A| \geq \delta' |E|$ and $| B | \geq \delta' |E|$.
        \end{itemize}
\end{lemma}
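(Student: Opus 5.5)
Assume (1) fails, i.e.\ $|S| < \delta'|E|$; we produce the partition in (2) by a greedy grouping of the connected components of $E \setminus S$. Write $C_1, \dots, C_k$ for these components. Since $S$ is a $\delta$-cut, each satisfies $|C_i| \le \delta |E|$. Moreover
$$
\sum_i |C_i| = |E \setminus S| > (1-\delta')|E|,
$$
so the components carry almost all of $E$. (If $E$ is infinite, the statement should be read for finite $E$, as in Definition~\ref{def:cut}.)

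\textbf{Step 1: a large component.} If some component has $|C_i| \ge \delta'|E|$, put $A := C_i$ and let $B$ be the union of the remaining components. Then $|A| \ge \delta'|E|$. For $B$,
$$
|B| \ge (1-\delta')|E| - \delta|E| .
$$
It remains to check $1 - \delta - \delta' \ge \delta'$, i.e.\ $2\delta' \le 1-\delta$. This holds because $\delta' \le (1-\delta)/4$.

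\textbf{Step 2: all components small.} Otherwise every component has size $< \delta'|E|$. Add components to $A$ one at a time until $|A| \ge \delta'|E|$ for the first time. Each step adds fewer than $\delta'|E|$ points, so at that moment $|A| < 2\delta'|E|$. The total mass $(1-\delta')|E|$ exceeds $\delta'|E|$, since $\delta' \le 1/8$, so the process does terminate. Let $B$ be the union of the rest. Then
$$
|B| > (1-\delta')|E| - 2\delta'|E| = (1-3\delta')|E| \ge \delta'|E|,
$$
where the last inequality is $4\delta' \le 1$, which again holds since $\delta' \le 1/8$.

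In both cases $A$ and $B$ are unions of components of $E \setminus S$ and partition it, so (2) holds. There is no real obstacle here: the only care needed is the arithmetic with $\delta'$. Note that the bound $\delta' \le \delta/4$ is not actually needed in this argument; only $\delta' \le (1-\delta)/4$ is used.
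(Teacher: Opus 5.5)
Your proof is correct and is essentially the paper's argument: the paper splits into the same two cases, a component of size at least $\delta'|E|$ (using $|C_i|\le\delta|E|$ and $2\delta'\le 1-\delta$) versus greedily accumulating small components until the first time the total reaches $\delta'|E|$, at which point it is below $2\delta'|E|$ (using $\delta'\le 1/4$). The only cosmetic difference is that the paper first sorts the components by decreasing size, which your version shows is unnecessary; your remark that only $\delta'\le(1-\delta)/4$ is actually used is also accurate, since it already gives $\delta'<1/4$.
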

\begin{proof}
        Suppose $|S| < \delta' |E|$. Let $(A_i)_{i=1, \dots, n}$ be the connected components of $E \setminus S$. So
        $$ \sum_{i=1}^n |A_i| > (1-\delta') |E|.$$
        Suppose that the connected components are ordered such that $|A_i| \geq |A_{i+1}|$. If $|A_1| \geq \delta' |E|$, then 
        $$\sum_{i=2}^n |A_i| \geq (1-\delta') |E| - \delta |E| \geq \delta' |E|,$$
        because $|A_1| \leq \delta |E|$. Therefore we have the partition. Otherwise, let $p$ be the smallest integer such that $\sum_{i=1}^p |A_i| \geq \delta' |E|$. So $\sum_{i=1}^p |A_i| \leq 2 \delta' |E|$. 
        $$\sum_{i=p+1}^n |A_i| \geq (1-\delta') |E| - 2\delta' |E| \geq \delta' |E|,$$
        because $\delta' \leq \frac{1}{4}$.
\end{proof}

\begin{definition}\label{def:persistent_family}
     Let $X$ be a connected graph of bounded degree, and let $r_0 \ge 0$ and $\alpha > 0$. 
     A family $\left\{ A_x(r) \right\}_{x \in X,\, r \ge r_0}$ of subgraphs of $X$ is called \emph{$\alpha$-persistent} if, for every $r \ge r_0$, the following hold:
     \begin{itemize}
         \item for all $x \in X$, $A_x(r) \subset B_X(x, 4r)$;
         \item for all $x,y \in X$, $|A_x(r)| = |A_y(r)|$;
         \item for every pair of neighbouring vertices $x,y \in X$, $|A_x(r) \cap A_y(r)| \ge \alpha |A(r)|$.
     \end{itemize}
\end{definition}

\begin{ex}\label{exple:persistent}
    Let $X$ be a connected vertex-transitive graph of valency $d$, and let $t \ge 1$. For every $x \in X$ and every $r \ge t$, set 
    $$A_x(r) := S(x,r)^{+t}.$$
    Then the family $\{A_x(r)\}_{x \in X,\, r \ge t}$ is $\alpha$-persistent, with $\alpha = \frac{1}{|B(t)|}$, see \cite[Example 2.14]{bensaid2024coarse}.
\end{ex} 
We end this subsection with the following result, which is a special case of \cite[Theorem 3.5]{bensaid2024coarse} and motivates the definition of persistent families.

\begin{thm}\label{thm:persist_family_expcut_coarse_sep}
    Let $X$ be a connected graph of bounded degree, and let $\{A_{x}(r)\}_{x,r}$ be an $\alpha$-persistent family. 
    If there exist constants $C,\beta>0$ such that, for every $x \in X$ and every sufficiently large $r$,
    $$
        \cut(A_x(r)) \geq C e^{\beta r},
    $$
    where $\delta = 1 - \frac{\alpha}{2}$, then $X \in \mathfrak{M}_{\mathrm{exp}}$.
\end{thm}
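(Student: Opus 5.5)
Assume, towards a contradiction, that $\mathcal Z$ is a coarsely separating family of subgraphs of $X$ with thickening constant $L$. The aim is to find, for arbitrarily large $r$, some $Z \in \mathcal Z$ and some $s \in Z$ with $|B_X(s, 8r+3L) \cap Z| \gtrsim e^{\beta r}$. This forces $\limsup_{R\to\infty}\frac1R \log V_{\mathcal Z}(R) \ge \beta/8>0$, so $X \in \mathfrak M_{\exp}$. The strategy is to walk a persistent set from one deep component to the other and locate a position where the separator cuts it.

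\textbf{Setup.} Fix a large $r$ and put $D := 4r+L+1$. By \Cref{def:coarse_sep} there is $Z \in \mathcal Z$ such that, writing $W := Z^{+L}$, the graph $X \setminus W$ has two distinct components $U \neq V$ containing points $x, y$ with $d(x,Z), d(y,Z) \ge D$. Then $B_X(x,4r)$ is connected and disjoint from $W$, so $A_x(r) \subset B_X(x,4r) \subset U$. Similarly $A_y(r) \subset V$. Choose a path $x = x_0, x_1, \dots, x_m = y$ in $X$ and write $A_i := A_{x_i}(r)$; all of these have the common size $|A(r)|$.

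\textbf{Transition step (main point).} Set $\delta = 1-\frac{\alpha}{2}$. I claim that for some $i$ the set $A_i \cap W$ is a $\delta$-cut of $A_i$. Suppose not. Then for each $i$ some component $K_i$ of $A_i \setminus W$ has $|K_i| > \delta |A(r)|$. Since $K_i$ is connected and avoids $W$, it lies in a single component $C_i$ of $X \setminus W$. We have $C_0 = U$ and $C_m = V$, so some consecutive pair satisfies $C_i \neq C_{i+1}$, and hence $K_i \cap K_{i+1} = \emptyset$. On the other hand, $|A_i \setminus K_i| < \frac{\alpha}{2}|A(r)|$, and likewise for $i+1$, while persistence gives $|A_i \cap A_{i+1}| \ge \alpha|A(r)|$. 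Therefore
\[
|K_i \cap K_{i+1}| \ \ge\ |A_i\cap A_{i+1}| - |A_i\setminus K_i| - |A_{i+1}\setminus K_{i+1}| \ >\ 0,
\]
which is a contradiction. This is exactly where the choice $\delta = 1-\alpha/2$ enters. I expect this to be the only genuinely non-routine step.

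\textbf{Counting.} For the index $i$ found above, the hypothesis (with $r$ large, uniformly in the centre) gives $|W \cap A_i| \ge \textup{cut}^{\delta}(A_i) \ge C e^{\beta r}$. We may take $C e^{\beta r} \ge 1$, so pick a point $w \in W\cap A_i$. Every point of $W \cap A_i$ is within $L$ of $Z$, and each point of $Z$ is within $L$ of at most $|B(L)|$ points. Bounded degree makes $|B(L)|$ a uniform constant, so
\[
|Z \cap B(x_i,4r+L)| \ge \frac{C e^{\beta r}}{|B(L)|}.
\]
Now choose $s \in Z$ with $d(s,w)\le L$. Then $d(s,x_i) \le 4r+L$, so $B(x_i,4r+L) \subset B(s,8r+2L)$. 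Hence
\[
V_{\mathcal Z}(8r+2L) \ge \frac{C}{|B(L)|}\, e^{\beta r}.
\]
Since this holds for all large $r$, $\mathcal Z$ has exponential volume growth, and therefore $X \in \mathfrak M_{\exp}$.
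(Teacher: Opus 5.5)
Your proof is correct and follows the route the paper has in mind. The paper does not reprove this statement; it cites it as a special case of \cite[Theorem~3.5]{bensaid2024coarse}, and its introduction sketches exactly your argument: walk neighbouring persistent sets $A_{x_i}(r)$ from one deep component to the other, use the overlap bound $|A_{x_i}(r)\cap A_{x_{i+1}}(r)|\ge \alpha|A(r)|$ to find an index where $Z^{+L}\cap A_{x_i}(r)$ is a $(1-\frac{\alpha}{2})$-cut, and turn the exponential lower bound on that cut into exponential growth of $\mathcal{Z}$. Your reading of ``sufficiently large $r$'' as uniform in $x$ is the natural one, and it holds automatically for the translation-equivariant thickened spheres used later; the mismatch between the radii $8r+3L$ and $8r+2L$ is harmless.
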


\subsection{Hyperbolic groups and their boundaries}
\subsubsection{Gromov boundary}\label{subsection:background_prelims}
Let $G$ be a non-elementary hyperbolic group, i.e.\ an infinite hyperbolic group that is not virtually cyclic. We also denote by $G$ its Cayley graph with respect to a fixed finite generating set, and let $\delta$ be its hyperbolicity constant, in the sense that every geodesic triangle in $G$ is $\delta$-slim.
The Gromov boundary $\partial G$ admits a visual metric $\rho$: there exist $a>1$, $C \geq 1$ such that, for all $\xi, \eta \in \partial G$ and for any bi-infinite geodesic $\gamma$ connecting them, we have 
    $$ \frac{1}{C} a^{-d(e,\gamma)} \leq \rho(\xi,\eta) \leq  C a^{-d(e,\gamma)}.$$
Given $x,y \in G$ and $\xi \in \partial G$, we denote by $[x,y]$ a geodesic segment from $x$ to $y$, and by $[x,\xi)$ a geodesic ray from $x$ to $\xi$. The following proposition is standard. We include a proof for completeness.
\begin{prop}\label{prop:dist_boundary_sphere_comparison}
    There exist constants $C_1, C_2 > 0$ such that for any $\xi, \eta \in \partial G$, any $n \in \mathbb{N}$, and any $x,y \in S_n$ for which there exist geodesic rays $[e,\xi)$ and $[e,\eta)$ passing through $x$ and $y$, respectively, we have
    \begin{equation}\label{eq distance boundary from S_n upper bound}
        \rho(\xi,\eta) \leq C_2 \, a^{-n} a^{\frac{d(x,y)}{2}}.
    \end{equation}
    In particular, for every $ \omega > 0$,
    \begin{equation}\label{eq:if_rho_big_d_big}
        \textup{if } \quad \rho(\xi,\eta) \geq \omega   \qquad \text{   then   } \qquad d(x,y) \geq 2n + 2 \log_a \left(\frac{\omega}{C_2}\right).
    \end{equation}
    Moreover, if $d(x,y) \geq d_0 := 20 \delta+1 $, then
    \begin{equation}\label{eq distance boundary from S_n lowel bound}
        C_1 \, a^{-n} a^{\frac{d(x,y)}{2}} \leq \rho(\xi,\eta).
    \end{equation}
    In particular, if $d(x,y) \geq d_0$, then $\rho(\xi_x,\xi_y) >0$ for any $\xi_x \in \partial G$ (resp.\ $\xi_y$) extending $[e,x]$ (resp.\ $[e,y]$). 
\end{prop}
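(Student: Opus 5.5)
The plan is to reduce everything to the Gromov product $(\xi|\eta)_e$ and use the standard fact that the visual metric satisfies $\rho(\xi,\eta)\asymp a^{-(\xi|\eta)_e}$, together with $d(e,\gamma)=(\xi|\eta)_e+O(\delta)$ for a bi-infinite geodesic $\gamma$ from $\xi$ to $\eta$. These are consistent with the definition of $\rho$ given above. For $x,y\in S_n$ lying on rays $[e,\xi)$ and $[e,\eta)$, we have $(x|y)_e = n - d(x,y)/2$. The whole statement then reduces to comparing $(\xi|\eta)_e$ with $(x|y)_e$ up to additive constants depending only on $\delta$. Taking logarithms base $a$ turns these additive errors into the multiplicative constants $C_1,C_2$.

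\emph{Upper bound.} Let $x_m\in[e,\xi)$ and $y_m\in[e,\eta)$ be the points at distance $m\ge n$ from $e$. Along geodesics from $e$, the Gromov product $(x_m|y_m)_e$ is non-decreasing in $m$ up to $O(\delta)$, and its liminf recovers $(\xi|\eta)_e$ up to $2\delta$. This follows from the tree approximation of the quadrilateral $e,x,x_m,y_m$, or directly from the $4$-point condition
\[
(x_m|y_m)_e\ge\min\{(x_m|x)_e,(x|y)_e,(y|y_m)_e\}-2\delta,
\]
together with $(x_m|x)_e=n$ and $(y|y_m)_e=n$, while $(x|y)_e\le n$. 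Hence $(\xi|\eta)_e\ge (x|y)_e - c\delta$. Exponentiating gives $\rho(\xi,\eta)\le C a^{-(x|y)_e+c\delta}=C_2a^{-n}a^{d(x,y)/2}$. The implication \eqref{eq:if_rho_big_d_big} is then just a rearrangement: $\omega\le C_2a^{-n}a^{d(x,y)/2}$ gives $d(x,y)\ge 2n+2\log_a(\omega/C_2)$.

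\emph{Lower bound.} This is the main obstacle, because the reverse inequality $(\xi|\eta)_e\le (x|y)_e+c\delta$ fails when the two rays fellow-travel past radius $n$. The assumption $d(x,y)\ge d_0=20\delta+1$ is what rules this out. I would argue by thin triangles or the tripod picture. If $d(x,y)>4\delta$ or so, the rays $[e,\xi)$ and $[e,\eta)$ must already have diverged before time $n$; precisely, the branch point in the approximating tree lies at distance $(x|y)_e=n-d(x,y)/2 < n-10\delta$ from $e$.

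Beyond that point, the $4$-point condition applied with the roles reversed gives
\[
(x|y)_e\ge\min\{(x|x_m)_e,(x_m|y_m)_e,(y_m|y)_e\}-2\delta .
\]
Since $(x|x_m)_e=(y|y_m)_e=n > (x|y)_e+2\delta$, the minimum must be $(x_m|y_m)_e$. This yields $(x_m|y_m)_e\le (x|y)_e+2\delta$ for all $m$, hence $(\xi|\eta)_e\le (x|y)_e+c\delta$. Exponentiating gives $\rho\ge C_1a^{-n}a^{d(x,y)/2}$. This bound is positive for any extensions $\xi_x,\xi_y$, which proves the final claim.

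The care needed is only in tracking that $d_0=20\delta+1$ comfortably exceeds the accumulated $O(\delta)$ losses, namely $2\delta$ from each use of the $4$-point condition and $2\delta$ from the liminf definition.
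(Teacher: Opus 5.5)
Your proposal is correct, but it takes a different route from the paper. The paper does not prove the upper bound; it cites \cite[Claim 3.19 (i)]{bensaid2024coarse}. For the lower bound it works directly with the defining property of $\rho$. It takes a bi-infinite geodesic $\tilde\gamma$ from $\xi$ to $\eta$. By $5\delta$-slimness of the ideal triangle $[e,\xi)\cup\tilde\gamma\cup[e,\eta)$, the points $x,y$ are $5\delta$-close to $\tilde\gamma$ once $d(x,y)>10\delta$. The midpoint $m$ of $[x,y]$ is then $2\delta$-close to $\tilde\gamma$ and satisfies $d(e,m)\le n-d(x,y)/2+2\delta$, which gives $d(e,\tilde\gamma)\le (x|y)_e+4\delta$. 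You instead work entirely with Gromov products: the four-point inequality along the two rays yields both $(x_m|y_m)_e\ge (x|y)_e-O(\delta)$ and, once $n-(x|y)_e=d(x,y)/2$ exceeds the four-point loss, $(x_m|y_m)_e\le (x|y)_e+O(\delta)$. Both arguments establish the same comparison $d(e,\tilde\gamma)\approx(x|y)_e$. Yours is more uniform: one inequality handles both bounds, including the one the paper only cites, and it shows why a threshold on $d(x,y)$ is needed. It does, however, rest on three standard facts you should state explicitly. First, $d(e,\gamma)=(\xi|\eta)_e+O(\delta)$, which you need because $\rho$ is defined here through $d(e,\gamma)$, not through Gromov products. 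Second, $\liminf_m (x_m|y_m)_e$ along your particular rays agrees with $(\xi|\eta)_e$ up to $O(\delta)$ whatever sequences define the boundary product; one of your two directions needs this. Third, $\delta$-slim triangles give a four-point constant of $3\delta$, not $\delta$. With this, your threshold becomes $d(x,y)>12\delta$ rather than $4\delta$. That is still below $d_0=20\delta+1$, so nothing breaks, but your $2\delta$-per-step bookkeeping should be adjusted. The paper's route avoids the Gromov-product machinery altogether and produces the explicit constant $C_1=a^{-4\delta}/C$.
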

\begin{proof}
    For the first inequality, we refer to \cite[Claim 3.19 (i)]{bensaid2024coarse}. Suppose that $d(x,y) \geq 20\delta+1$, and let $\tilde \gamma$ be a bi-infinite geodesic with endpoints $\xi,\eta$. The (generalised) triangle $[e, \xi] \cup  \tilde \gamma \cup [e,\eta]$ is $5\delta$-slim, see \cite[Proposition 2.2]{CoornaertDelzantPapadopoulos1990}, so $x \in ( \tilde\gamma \cup [e,\eta])^{+5\delta}$. If $x \in  [e,\eta]^{+5\delta} $, we would get $d(x,y) \leq 10 \delta$. Therefore $x \in \tilde \gamma^{+5\delta}$, and similarly $y \in \tilde \gamma^{+5\delta}$. Let $x',y' \in \tilde \gamma$ such that
    $$ d(x,x') \leq 5\delta, \quad d(y,y') \leq 5\delta.$$
    Let $\gamma$ be a geodesic segment from $x$ to $y$, and let $m$ be its midpoint. Let $\alpha$ be a geodesic segment from $x$ to $y'$. In the geodesic triangle $\gamma \cup \alpha \cup [y,y']$, since $d(m,y) \geq 10\delta$, then $d(m, [y,y']) \geq 5\delta$. Therefore, there exists $m_1 \in \alpha$ such that $d(m,m_1) \leq \delta$. Now consider the geodesic triangle $\alpha \cup [x',y'] \cup [x,x']$. We have $d(m_1,x) \geq d(m,x) -\delta \geq 9\delta$, so $d(m_1,[x,x']) \geq 4 \delta$. Hence there exists $m' \in [x',y']$ such that $d(m_1,m') \leq \delta$. Therefore 
    $$d(m,m') \leq 2 \delta.$$
    Now consider the geodesic triangle $[e,x] \cup \gamma \cup [e,y]$. Again by $\delta$-slimness, $m \in ([e,x] \cup [e,y])^{+\delta}$. We may assume that there exists $z \in [e,x]$ such that $d(m,z) \leq \delta$. Since
    $$\frac{d(x,y)}{2} = d(x,m) \leq d(x,z)+d(z,m) \leq d(x,z)+ \delta, $$
    we get that $d(x,z) \geq \frac{d(x,y)}{2} - \delta$. Therefore $d(e,z) \leq n -\frac{d(x,y)}{2} - \delta$, and 
    $$d(e,m)\leq n- \frac{d(x,y)}{2}+ 2 \delta.$$
    Finally, we get
    $$d(e,\tilde \gamma) \leq d(e,m') \leq d(e,m) + d(m,m') \leq n - \frac{d(x,y)}{2}+4 \delta.$$
    Therefore,
    $$\rho(\xi,\eta) \geq \frac{1}{C}a^{-d(e,\tilde\gamma)} \geq \frac{1}{C} a^{-(n-\frac{d(x,y)}{2}+4\delta)} = \frac{a^{-4\delta}}{C}\, a^{-n} a^{\frac{d(x,y)}{2}}.$$
    Set $C_1:= \frac{a^{-4\delta}}{C}$.
\end{proof}
By \cite[Thm 7.2]{coornaert1993mesures}, there exist $\alpha>0$ and $K\geq 1$ such that for any $g \in G$ and any integer $n$,
    \begin{equation}\label{eq exp growth of balls}
        \frac{1}{K} e^{\alpha n}  \leq | B(g,n) | \leq K e^{\alpha n} .
    \end{equation}
    Since $G$ is non-elementary, therefore non-amenable, there exists $K' \geq 1$ such that for any $g \in G$ and any integer $n$,
    \begin{equation}\label{eq exp growth of spheres}
        \frac{1}{K'} e^{\alpha n}  \leq | S(g,n) | \leq K' e^{\alpha n} .
    \end{equation}
To simplify notation, we set $B_n := B(e,n)$ and $S_n := S(e,n)$.
As a consequence of Lemma~\ref{lem:partition_connected_comp}, the next proposition shows that, unless an $\varepsilon$-cut of a thickened sphere is exponentially large, it must leave two connected components containing points very far from the cut.

\begin{prop}\label{prop:big_spheres_farfrom_cuts}
    Let $G$ be a non-elementary hyperbolic group, and let $\alpha, K, K'$ be the constants given by \eqref{eq exp growth of balls} and \eqref{eq exp growth of spheres}. Let $\varepsilon \in (0,1)$ and $t \geq 1$. Then there exists $\Theta > 0$ such that, for every integer $n$ and every $\varepsilon$-cut $Z_n \subset S_n^{+t}$, one of the following holds:
    \begin{itemize}
        \item[(1)] there exist $x_n,y_n$ in different connected components of $S_n^{+t} \setminus Z_n$ such that
        $$
        d(\{x_n,y_n\},Z_n) > \frac{n}{2};
        $$
        \item[(2)] $|Z_n| \geq \Theta e^{\frac{\alpha}{2}n}$.
    \end{itemize}
\end{prop}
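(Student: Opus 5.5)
Apply Lemma~\ref{lem:partition_connected_comp} to $E := S_n^{+t}$ and the $\varepsilon$-cut $Z_n$, with $\delta' = \min(\frac{\varepsilon}{4},\frac{1-\varepsilon}{4})$. Throughout we use the two-sided bound $|S_n^{+t}| \asymp e^{\alpha n}$. The lower bound $|S_n^{+t}|\geq |S_n| \geq \frac{1}{K'}e^{\alpha n}$ comes from \eqref{eq exp growth of spheres}. The upper bound $|S_n^{+t}| \leq |B_{n+t}| \leq K e^{\alpha t} e^{\alpha n}$ comes from \eqref{eq exp growth of balls}.

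\emph{Case (1) of the lemma.} Here $|Z_n| \geq \delta' |S_n^{+t}| \geq \frac{\delta'}{K'} e^{\alpha n}$. For $n \geq 0$ this is at least $\frac{\delta'}{K'} e^{\frac{\alpha}{2}n}$, so alternative (2) of the proposition holds.

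\emph{Case (2) of the lemma.} We get a partition $S_n^{+t}\setminus Z_n = A \sqcup B$, where $A$ and $B$ are unions of components and $|A|,|B| \geq \delta' |S_n^{+t}| \geq \frac{\delta'}{K'} e^{\alpha n}$. Suppose alternative (1) fails. Then every point of $A$ lies within $n/2$ of $Z_n$, or every point of $B$ does. Indeed, if $x\in A$ and $y \in B$ both had $d(\cdot,Z_n) > n/2$, they would lie in different components and give (1).

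Say all of $A$ lies within $n/2$ of $Z_n$. Then $A \subset Z_n^{+n/2} = \bigcup_{z\in Z_n} B(z,n/2)$. By \eqref{eq exp growth of balls},
$$
\frac{\delta'}{K'} e^{\alpha n} \leq |A| \leq |Z_n|\, K e^{\alpha n/2}.
$$
Hence $|Z_n| \geq \frac{\delta'}{KK'} e^{\frac{\alpha}{2} n}$, which is alternative (2).

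Setting $\Theta := \frac{\delta'}{KK'}$ covers both cases, since $K \geq 1$. For the ball bound with radius $n/2$ we may replace $n/2$ by $\lfloor n/2\rfloor$; this only helps the inequality. The constant $\Theta$ depends only on $\varepsilon$, $K$ and $K'$.

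The only delicate point is the case analysis. The negation of (1) has to be read carefully: it forces one whole side of the partition to be close to $Z_n$, and this is exactly what the covering-by-balls count needs. Everything else is bookkeeping of constants.
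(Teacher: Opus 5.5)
Your proposal is correct and follows the paper's argument. You partition $S_n^{+t}\setminus Z_n$ via Lemma~\ref{lem:partition_connected_comp}, observe that failure of (1) forces one whole side into $Z_n^{+n/2}$, and cover that side by balls of radius $n/2$ to get $\Theta=\varepsilon'/(KK')$. You also treat explicitly the first alternative of the lemma ($|Z_n|\geq \varepsilon'|S_n^{+t}|$), which the paper's write-up passes over silently; this is a small but welcome addition.
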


\begin{proof}
      Let $n$ be an integer, and let
      $$
      S_n^{+t} \setminus Z_n = A \sqcup B
      $$
      be a partition of the connected components of $S_n^{+t} \setminus Z_n$ as in Lemma~\ref{lem:partition_connected_comp}. Denote the connected components by $A = \sqcup_{i \in I} A_i$ and $B = \sqcup_{j \in J} B_j$. If $A_i \subset Z_n^{+\frac{n}{2}} $ for any $i \in I$, then $A \subset Z_n^{+\frac{n}{2}} $, and
      $$ \varepsilon' |S_n^{+t}| \leq  |A| \leq |Z_n| \times \sup_{g \in G}|B\left(g,n/2\right)| \leq |Z_n| K e^{\alpha \frac{n}{2}},$$
      where $\varepsilon' = \min (\frac{\varepsilon}{4}, \frac{1-\varepsilon}{4})$. Since
      $$    \frac{1}{K'} e^{\alpha n} \leq  |S_n|   \leq  |S_n^{+t}|, $$
      we get
      $$|Z_n| \geq \Theta e^{\frac{\alpha}{2}n},$$ 
      for $\Theta = \frac{\varepsilon'}{K K'}$. Otherwise, let $i_0 \in I$ such that $A_{i_0} \not\subset Z_n^{+\frac{n}{2}}$. If $(1)$ is not satisfied, then for any $j \in J$, $B_j \subset Z_n^{+\frac{n}{2}}$. Therefore $B \subset Z_n^{+\frac{n}{2}}$ and the same argument implies once again that
      $|Z_n| \geq \Theta e^{\frac{\alpha}{2}n}$. 
\end{proof}

For every $n \in \mathbb{N}$, let $ \pi_n : G \setminus B_{n-1} \to S_n $ be the projection that associates to $g$ the intersection of $S_n$ with $[e,g]$, with respect to a fixed choice of geodesic segments $[e,g]$ from $e$.
\begin{prop}\label{prop:basics_hyp_groups1}
Let $G$ be a non-elementary hyperbolic group.
\begin{itemize}
    \item[(i)] For $d_0>0$ as in Proposition~\ref{prop:dist_boundary_sphere_comparison}, if $\gamma_1,\gamma_2$ are asymptotic geodesic rays emanating from $e$, then for any $t \geq 0$,
    $$
    d(\gamma_1(t),\gamma_2(t)) \leq d_0.
    $$
    \item[(ii)] There exists a constant $D \geq 0$ such that every $g \in G$ is at distance at most $D$ from a geodesic ray emanating from $e$.
    \item[(iii)]For every $x,y \in G \setminus B_{n-1}$,
    $$
    d(\pi_n(x),\pi_n(y)) \leq d(x,y) + 2\delta.
    $$
\end{itemize}
\end{prop}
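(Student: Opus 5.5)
The three items are standard consequences of $\delta$-hyperbolicity, and I will treat them separately.

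For (i), let $\gamma_1,\gamma_2$ be asymptotic rays from $e$ and fix $t\ge 0$. Choose $s\gg t$ with $d(\gamma_1(s),\gamma_2(s))$ bounded by a constant $c$ that depends only on $\delta$. Such an $s$ exists because asymptotic rays from a common basepoint are eventually uniformly close; this follows from slimness of ideal bigons, e.g.\ via \cite[Proposition 2.2]{CoornaertDelzantPapadopoulos1990}. Now look at the geodesic triangle $\gamma_1([0,s])\cup\gamma_2([0,s])\cup[\gamma_1(s),\gamma_2(s)]$. By $\delta$-slimness, $\gamma_1(t)$ lies within $\delta$ of either $\gamma_2([0,s])$ or the short side. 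Since $s-t$ is large compared with $c$, it must be close to some point $\gamma_2(t')$. The triangle inequality from $e$ then gives $|t-t'|\le\delta$, so
\[
d(\gamma_1(t),\gamma_2(t))\le 2\delta,
\]
which is well below $d_0=20\delta+1$. The generous constant in the statement absorbs the constant from ideal bigons, so I will not optimise it.

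For (ii), I will use cocompactness. The set of points lying on geodesic rays from $e$ is nonempty, and the same holds from every basepoint. The point is to show that every $g$ is uniformly close to a ray from $e$. Since $G$ is non-elementary, I would like to extend any geodesic $[e,g]$ beyond $g$ quasi-geodesically. First I pick finitely many boundary points of $\partial G$ (which is infinite) such that, for every direction, one of them is "opposite": specifically, finitely many rays $\beta_1,\dots,\beta_k$ from $e$ with endpoints pairwise far apart, so that for any point $p$ at most one $\beta_i$ has small Gromov product with $p$ based at $e$. Translating by $g$, I choose $i$ with $(e\cdot g\beta_i(\infty))_g$ bounded; this is possible because only one of the $g\beta_j$ can be "aligned toward $e$". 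Next I take a ray $[e,g\beta_i(\infty))$. By thin triangles in the ideal triangle with vertices $e$, $g$, $g\beta_i(\infty)$, the bounded Gromov product puts $g$ within a uniform distance $D$ of this ray. The main obstacle is making this choice uniform. I will phrase it via the standard fact that, for three boundary points with pairwise visual distance at least $\omega$, the Gromov products at $e$ are bounded by $\log_a(C/\omega)$, and then apply this after translating by $g^{-1}$.

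For (iii), let $x,y$ be outside $B_{n-1}$, with $x'=\pi_n(x)\in[e,x]$ and $y'=\pi_n(y)\in[e,y]$. Write $R=(x\cdot y)_e$. If $n\le R$, the tripod approximation of the triangle $[e,x]\cup[e,y]\cup[x,y]$ (insize $\le\delta$ up to constants, or directly $\delta$-slimness plus a distance comparison) gives $d(x',y')\le 2\delta$ after taking $\delta$ to be the thinness constant. If $n>R$, then
\[
d(x',y')\le d(x',x)+d(x,y)+d(y,y')-\bigl(\text{overcount}\bigr).
\]
More precisely, $x'$ lies within $\delta$ of a point $p\in[x,y]$ with $d(x,p)\approx d(x,x')$, and likewise $y'$ is close to a point $q$. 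This yields $d(x',y')\le d(p,q)+2\delta\le d(x,y)+2\delta$. I expect to check only the bookkeeping showing that $p$ and $q$ lie in the correct order along $[x,y]$; if they do not, the bound is even easier.
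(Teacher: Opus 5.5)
Your proof is correct. It follows the paper on (iii) but takes a different route on (i) and (ii).

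\textbf{Item (i).} The paper does not argue inside $G$. It applies the lower bound \eqref{eq distance boundary from S_n lowel bound} of Proposition~\ref{prop:dist_boundary_sphere_comparison}: if $d(\gamma_1(t),\gamma_2(t))\ge d_0$, then $\rho(\xi,\eta)>0$, which contradicts the rays having the same endpoint. This reuses machinery already in place, and it is why the statement is phrased with $d_0$. Your finite-triangle argument takes $s>t+c+\delta$, uses slimness of the triangle on $e,\gamma_1(s),\gamma_2(s)$, and deduces $|t-t'|\le\delta$. It is self-contained and gives the sharper bound $2\delta$. It also only needs the two rays to stay at bounded distance from each other, so no constant $c$ depending only on $\delta$ is required.

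\textbf{Item (ii).} The paper simply cites \cite[Lemma 3.1]{bestvina1991boundary}; your sketch is the standard proof of that lemma. Your first sentence has a slip. Endpoints that are pairwise far apart guarantee that at most one $\beta_i$ has \emph{large} Gromov product $(p\cdot\beta_i(\infty))_e$ with a given $p$, not small. This follows from $\min\{(p\cdot\xi_i)_e,(p\cdot\xi_j)_e\}\le(\xi_i\cdot\xi_j)_e+O(\delta)$. Your next sentence, that only one $g\beta_j$ can be aligned toward $e$, already uses the correct version. Two distinct boundary points suffice.

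\textbf{Item (iii).} This is essentially the paper's argument, with two remarks.
\begin{itemize}
\item \emph{Constant.} The paper splits cases by slimness directly. Either $\pi_n(x)$ is $\delta$-close to $[e,y]$ (or $\pi_n(y)$ to $[e,x]$), and comparing distances to $e$ gives $d(\pi_n(x),\pi_n(y))\le 2\delta$. Or both points are $\delta$-close to $[x,y]$. This is the ``directly $\delta$-slimness'' variant you mention, and it gives exactly $2\delta$ with $\delta$ the slimness constant. Your tripod split according to $(x\cdot y)_e$ uses the thinness constant instead, which is only bounded by a multiple of $\delta$. You must either renormalise $\delta$ (harmless here) or accept a larger additive constant.
\item \emph{Ordering.} The bookkeeping you anticipate is unnecessary, since any $p,q\in[x,y]$ satisfy $d(p,q)\le d(x,y)$.
\end{itemize}
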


\begin{proof}
Part $(i)$ follows directly from \eqref{eq distance boundary from S_n lowel bound}, and see \cite[Lemma 3.1]{bestvina1991boundary} for $(ii)$.

$(iii)$ Let $x,y \in G \setminus B_{n-1}$, and let $\alpha$ be the geodesic segment $[x,y]$. Consider the geodesic triangle $[e,x] \cup \alpha \cup [y,e]$. If $\pi_n(x) \in [e,y]^{+\delta}$ or $\pi_n(y) \in [e,x]^{+\delta}$, then it is easy to check that
$$
d(\pi_n(x),\pi_n(y)) \leq 2\delta.
$$
Otherwise, let $x',y' \in \alpha$ be such that $d(\pi_n(x),x') \leq \delta$ and $d(\pi_n(y),y') \leq \delta$. Since $d(x',y') \leq d(x,y)$, we conclude by the triangle inequality.
\end{proof}
\begin{prop}\label{prop:basic_hyp_groups2}
Let $G$ be a non-elementary hyperbolic group, $d_0>0$ be as in Proposition~\ref{prop:dist_boundary_sphere_comparison}, $n \in \mathbb{N}$, and $x,y \in S_n$. Then the following holds.
 \begin{itemize}
        \item[(i)] If $x$ lies in some geodesic ray $[e,\xi)$ and $y$ lies in some geodesic ray $[e,\eta)$, then any continuous path $\tilde \gamma \subset \partial G $ from $\xi$ to $\eta$ can be pulled-back to a $d_0$-path in $S_n$: there exists a sequence of points $x=x_0, x_1,\dots,x_k = y$ in $S_n$ such that, for any $i$,  $d(x_i,x_{i+1}) \leq d_0$, and $x_i$ lies in some geodesic ray $[e,\xi_i)$, with $\xi_i \in \tilde{\gamma}$.
        \item[(ii)] If $G$ is one-ended, set
        $$
        t_0 := 3 \max\{D, d_0\},
        $$
        where $D$ is the constant from Proposition~\ref{prop:basics_hyp_groups1} and $d_0$ is the constant from Proposition~\ref{prop:dist_boundary_sphere_comparison}. Then for every $t \ge t_0$ and $n \in \mathbb{N}$, $S_n^{+t}$ is connected.

        \item[(iii)] Let $t \geq t_0$. If there exists a path of length $d$ from $x$ to $y$ in $G \setminus B_{n-1}$, then there exists a path from $x$ to $y$ in $S_n^{+t}$ of length $\leq (1+2\delta)d$. 
    \end{itemize}   
\end{prop}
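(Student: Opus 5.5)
For (i), I would use the visual metric and Proposition~\ref{prop:dist_boundary_sphere_comparison}. By uniform continuity of $\tilde\gamma$ on a compact interval, subdivide it into points $\xi=\xi_0,\xi_1,\dots,\xi_k=\eta$ with $\rho(\xi_i,\xi_{i+1})$ so small that it is below $C_1 a^{-n}a^{d_0/2}$. For each $i$, choose a geodesic ray $[e,\xi_i)$ (for $i=0,k$ the given rays) and let $x_i$ be its point on $S_n$. If $d(x_i,x_{i+1})\ge d_0$, then \eqref{eq distance boundary from S_n lowel bound} gives $\rho(\xi_i,\xi_{i+1})\ge C_1a^{-n}a^{d(x_i,x_{i+1})/2}\ge C_1 a^{-n}a^{d_0/2}$, a contradiction. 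So consecutive points are at distance $<d_0$.

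For (ii), take $z,w\in S_n^{+t}$. By Proposition~\ref{prop:basics_hyp_groups1}(ii), there are geodesic rays from $e$ passing within $D$ of points of $S_n$ near $z$ and $w$. Precisely, let $z'\in S_n$ with $d(z,z')\le t$. A ray $[e,\xi)$ passes within $D$ of $z'$, so its point $x$ on $S_n$ satisfies $d(x,z')\le 2D$ by the triangle inequality on distances to $e$. The geodesic $[z',x]$, of length $\le 2D$, stays within $2D\le t$ of $S_n$. Likewise connect $z$ to $z'$ along a geodesic inside $S_n^{+t}$: points of $[z,z']$ lie within $t$ of $z'$. Do the same for $w$, reaching $y\in[e,\eta)$. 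Since $G$ is one-ended, $\partial G$ is connected, and for a compact metrisable connected space being locally connected gives path connectedness (Bestvina–Mess/Swarup). So there is a path $\tilde\gamma$ from $\xi$ to $\eta$, and (i) yields a $d_0$-chain in $S_n$. Geodesics between consecutive chain points stay within $d_0/2\le t$ of $S_n$. Concatenating gives a path in $S_n^{+t}$.

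For (iii), given a path $p=(p_0=x,\dots,p_d=y)$ in $G\setminus B_{n-1}$, I would project it with $\pi_n$. By Proposition~\ref{prop:basics_hyp_groups1}(iii), $d(\pi_n(p_j),\pi_n(p_{j+1}))\le 1+2\delta$, and $\pi_n(x)=x$, $\pi_n(y)=y$. Join consecutive projections by geodesics of length $\le 1+2\delta$. Each such geodesic point lies within $(1+2\delta)/2$ of $S_n$, which is within $t$ provided $t_0\ge 1+2\delta$. This holds since $d_0=20\delta+1$ and $t_0\ge 3d_0$. The total length is $\le(1+2\delta)d$.

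The main obstacle is (ii), specifically justifying path-connectedness of $\partial G$ from one-endedness. The hyperbolicity-only parts are straightforward.
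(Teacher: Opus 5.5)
Your proposal is correct and follows the paper's argument in all three parts: subdividing the boundary path and invoking the lower bound of Proposition~\ref{prop:dist_boundary_sphere_comparison} for (i), passing to nearby rays and pulling back a boundary path for (ii), and projecting with $\pi_n$ for (iii). Two small points in your favour: your subdivision threshold $C_1a^{-n}a^{d_0/2}$ is the one that actually makes the contradiction in (i) go through, whereas the paper's threshold $\frac{1}{C_1}a^{-n}$ does not obviously do so. Also, your appeal to local connectedness of $\partial G$ (Bestvina--Mess together with Bowditch/Swarup) makes explicit the path-connectedness of $\partial G$ that the paper uses in (ii) without comment.
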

\begin{proof}
    $(i)$ Let $\tilde \gamma :  [0,L] \to \partial G$ be a continuous path. By compactness of $\tilde{\gamma}([0,L])$, there exists a partition
$$
0 = t_0 \leq t_1 \leq \dots \leq t_k = L
$$
such that for any $i$,
$$
\rho\bigl(\tilde{\gamma}(t_i), \tilde{\gamma}(t_{i+1})\bigr) < \frac{1}{C_1} a^{-n},
$$
where $C_1$ is the constant from Proposition~\ref{prop:dist_boundary_sphere_comparison}. For each $i$, let $x_i$ be the intersection of a geodesic ray from $e$ to $\tilde{\gamma}(t_i)$ with $S_n$. By \eqref{eq distance boundary from S_n lowel bound}, we obtain
$$
d(x_i, x_{i+1}) \le d_0.
$$
$(ii)$ Let $t \geq t_0$ and let $n \in \mathbb{N}$. Let $x,y \in S_n^{+t}$. Choose $x', y' \in S_n$ such that
$$
d(x, x') \leq t \quad \text{and} \quad d(y, y') \leq t.
$$
There exist geodesic rays $c, c'$ starting from $e$, and points $x'' \in c$, $y'' \in c'$ such that
$$
d(x', x'') \leq D \quad \text{and} \quad d(y', y'') \leq D.
$$
Up to replacing $D$ by $2D$ in the previous inequalities, we may assume that $x'', y'' \in S_n$. Since $t \geq t_0$, we have
$$
d_0 \leq t \quad \text{and} \quad 2D \leq t.
$$
By considering a path in $\partial G$ from $c(+\infty)$ to $c'(+\infty)$ and applying $(i)$ to this path, we obtain a $d_0$-path in $S_n$ from $x''$ to $y''$. Replacing each step of this $d_0$-path by a geodesic segment in $G$, and since $d_0 \leq t$, we obtain a path contained in $S_n^{+t}$. Finally, by adding the geodesic segments
$$
[x, x'], \quad [x', x''], \quad [y'', y'], \quad [y', y],
$$
and since $2D \leq t$, these segments also lie in $S_n^{+t}$. Hence we obtain a path in $S_n^{+t}$ from $x$ to $y$.

$(iii)$ Let $\gamma$ be a path of length $d$ from $x$ to $y$ in $G \setminus B_{n-1}$. By Proposition~\ref{prop:basics_hyp_groups1}, $\pi_n(\gamma)$ is a $(1+2 \delta)$-path in $S_n$ from $x$ to $y$. By replacing each step of this $(1+2 \delta)$-path by a geodesic segment in $G$, and since $1+2 \delta \leq t_0 \leq t$, we obtain a path contained in $S_n^{+t}$. Moreover, its length is $\leq (1+2\delta)d$.
\end{proof}
We end this subsection by citing a result of Lazarovich--Margolis--Mj \cite[Claims 4.2 and 4.3]{lazarovich2024commensurated}, which describes minimal closed separators in the boundary and shows that they satisfy a uniform density property. This result will be crucial in the proof of \Cref{thm:cut_spheres_exp}.

\begin{thm}\label{thm:LMM}
    Let $G$ be a one-ended hyperbolic group that does not split over a two-ended subgroup, and let $\rho$ be a visual metric on $\partial G$. Then there exists a constant $\lambda \in (0,1)$ such that the following holds. If $Z \subseteq \partial G$ separates $\partial G$, and $\xi_1,\xi_2 \in \partial G \setminus Z$ belong to different connected components of $\partial G \setminus Z$, then there exists a minimal closed subset (with respect to inclusion) $Z'$ of $Z$ that separates $\xi_1$ and $\xi_2$. Moreover, for any $\xi \in Z'$ and any $0 < r < \rho \big( \{\xi_1,\xi_2\}, Z' \big)$,
    $$ Z' \cap \big( \bar B_\rho(\xi,r) \setminus B_\rho(\xi,\lambda r) \big) \neq \emptyset,  $$
    where $\bar B_\rho$ and $B_\rho$ denote the closed and open balls, respectively, in $(\partial G,\rho)$.
\end{thm}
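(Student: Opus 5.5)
The statement splits into an existence part (a minimal closed separator $Z'\subseteq Z$) and a quantitative part (uniform annular density of $Z'$). This plan treats both parts for a closed set $Z$; if $Z$ is not assumed closed, I would first replace it by a closed separating subset, which I have not addressed. For existence, I would use Zorn's lemma on the family of closed subsets of $Z$ separating $\xi_1$ from $\xi_2$, ordered by reverse inclusion. Given a chain $(Z_i)$, set $Z_\infty=\bigcap_i Z_i$, which is closed. Suppose $Z_\infty$ did not separate $\xi_1$ from $\xi_2$. Since $G$ is one-ended, Bestvina--Mess and Bowditch give that $\partial G$ is a Peano continuum, hence locally connected. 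Then the open connected set $\partial G\setminus Z_\infty$ is path-connected, so there is an arc $c$ from $\xi_1$ to $\xi_2$ avoiding $Z_\infty$. The compact set $c$ is disjoint from $\bigcap_i Z_i$, and the $Z_i$ form a nested family of compacta. By the finite intersection property, $c\cap Z_i=\emptyset$ for some $i$, contradicting that $Z_i$ separates. So a minimal $Z'$ exists.

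For density, I would argue by contradiction combined with the cocompactness of the action of $G$ on triples of $\partial G$. Suppose that for every $k$ there are data $Z'_k,\xi_1^k,\xi_2^k$, a point $\zeta_k\in Z'_k$ and a radius $r_k<\rho(\{\xi_1^k,\xi_2^k\},Z'_k)$ such that $Z'_k$ misses the annulus $\bar B_\rho(\zeta_k,r_k)\setminus B_\rho(\zeta_k,\lambda_k r_k)$, with $\lambda_k\to 0$. Write $P_k=Z'_k\cap B_\rho(\zeta_k,\lambda_k r_k)$; this is a nonempty closed piece of $Z'_k$ lying well inside the ball $\bar B_\rho(\zeta_k,r_k)$. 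By minimality, $Z'_k\setminus P_k$ no longer separates. Hence there is a path from $\xi_1^k$ to $\xi_2^k$ whose only essential crossings of $Z'_k$ occur inside the small ball $B_\rho(\zeta_k,\lambda_k r_k)$. The conclusion is that the small set $P_k$ separates two points of the annulus region, namely the entry and exit points of that path, which lie at distance $\gtrsim r_k$ from each other.

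The key step, and the main obstacle, is to turn ``a set of diameter $\le 2\lambda_k r_k$ separates points of the ball $\bar B_\rho(\zeta_k,r_k)$ at scale $r_k$'' into a contradiction. I would rescale using the group: by convergence-group and quasi-conformality properties of the visual metric, there are $g_k\in G$ mapping $\bar B_\rho(\zeta_k,r_k)$ to a set of definite size with bounded distortion. Passing to Hausdorff limits, $g_kP_k$ converges to a single point $p$. The separated pairs converge to points at definite distance, and they lie in distinct components of the complement of a neighbourhood of $p$ inside a fixed ball. This makes $p$ a local cut point of $\partial G$. Equivalently, it contradicts the uniform local connectedness statement that small sets cannot separate at a larger scale. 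Bowditch's theorem rules this out, since a one-ended hyperbolic group that does not split over a two-ended subgroup has no local cut points. Virtual surface groups are excluded automatically, because they split over $\mathbb Z$.

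The delicate point is making the limiting argument rigorous. Local disconnection must survive the limit, so I would formulate it through uniform linear local connectivity: there is $L$ such that any two points in $B(\zeta,r)\setminus B(\zeta,r/L)$ can be joined within $B(\zeta,Lr)\setminus B(\zeta,r/L^2)$, and the no-local-cut-point property ensures these constants are uniform by compactness. This yields a fixed $\lambda=1/L^2$, which gives the claimed annular density.
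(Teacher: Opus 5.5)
\textbf{Verdict: genuine gap, in the handling of virtual surface groups.} The paper does not prove this statement; it quotes it from Lazarovich--Margolis--Mj (Claims 4.2 and 4.3 there). Your reconstruction is a sound strategy \emph{when $\partial G$ has no local cut points}. For existence you use Zorn's lemma plus the finite intersection property on a compact arc. For density, minimality forces $P_k=Z'_k\cap B_\rho(\zeta_k,\lambda_k r_k)$ to separate at scale $r_k$, which uniform annular connectivity of $\partial G$ forbids.

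The gap is the sentence ``virtual surface groups are excluded automatically, because they split over $\mathbb{Z}$''. This is false. Hyperbolic triangle groups are one-ended and virtually surface groups. Yet they have Serre's property FA, being generated by two elements of finite order whose product has finite order, so they do not split at all; the paper itself notes this in the introduction. Bowditch's theorem gives the absence of local cut points only when $\partial G\not\cong S^1$. When $\partial G\cong S^1$, every point is a local cut point, so your key step has nothing to rest on.

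In fact no argument can close this case, because the statement as written fails there. In $S^1$, a minimal closed separator of $\xi_1,\xi_2$ is a pair $Z'=\{p,q\}$ with one point in each complementary arc. Take $\xi=p$ and $0<r<\min\{\rho(p,q),\rho(\{\xi_1,\xi_2\},Z')\}$. Then $\bar B_\rho(p,r)\setminus B_\rho(p,\lambda r)$ contains neither $p$ nor $q$, whatever $\lambda$ is. So the statement needs the extra hypothesis that $G$ is not virtually a surface group (equivalently $\partial G\not\cong S^1$). This is harmless for the paper, since \Cref{thm:cut_spheres_exp}, where the statement is used, assumes it. With that hypothesis, Bowditch's theorem gives exactly what you need.

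Under that hypothesis your argument works after some repairs:
\begin{itemize}
\item To invoke minimality you must record that $Z'_k\setminus P_k$ is closed. It is: since $Z'_k$ misses the annulus, $P_k$ is clopen in $Z'_k$.
\item What you need is not two points far from each other. You need points $a$, $b$ in the components of $\partial G\setminus Z'_k$ containing $\xi_1^k$ and $\xi_2^k$ respectively, at distance comparable to $r_k$ from $\zeta_k$. Take $a$ to be the last point at a prescribed radius on the arc before it first meets $P_k$, and $b$ symmetrically after the arc last meets $P_k$.
\item With your formulation of annular connectivity, the joining path may reach radius $Ls$. To stay inside $\bar B_\rho(\zeta_k,r_k)$ you must work at scale $s=r_k/L$, which gives $\lambda=L^{-3}$ rather than $L^{-2}$.
\item Uniformity of $L$ over all scales is not a compactness statement about $\partial G$ alone. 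It needs the group rescaling with bounded distortion of the visual metric that you describe. The resulting fact is known (Bonk--Kleiner, Mackay): boundaries of hyperbolic groups without local cut points are annularly linearly connected.
\end{itemize}
The closedness caveat you flag is also harmless for the paper: there $Z=\sh(Z_n)$ is a finite union of shadows, each closed by Arzel\`a--Ascoli.
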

\subsubsection{Uniform distortion of spheres in one-ended hyperbolic groups}

By Proposition~\ref{prop:basic_hyp_groups2}, for every $t \ge t_0$ and every $n \in \mathbb{N}$, $S_n^{+t}$ is connected. For $x,y \in S_n^{+t}$, we define
$d_{S_n^{+t}}(x,y)$ to be the length of a shortest path in $S_n^{+t}$ joining $x$ to $y$.

The aim of this subsection is to show that the family $(S_n^{+t},d_{S_n^{+t}})_{n \in \mathbb{N}}$ is uniformly exponentially distorted, independently of $n$:
\begin{prop}\label{prop:distortion_of_spheres}
    Let $G$ be a one-ended hyperbolic group, and let $t_0$ be as in Proposition~\ref{prop:basic_hyp_groups2}. Then for every $t \geq t_0$, there exist constants $\mu_1,\mu_2,A_1,A_2 >0$ such that the following holds. For every $n >t$, and every $x,y \in S_n^{+t}$ such that $d(x,y) \geq 9 \delta + 6t$, 
    $$ A_1 e^{\mu_1 d(x,y)} -2t \leq  d_{S_n^{+t}}(x,y) \leq A_2 e^{\mu_2 d(x,y)}.$$
\end{prop}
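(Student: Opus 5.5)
The plan is to prove the two inequalities separately, reducing both to the comparison between paths in $S_n^{+t}$ and paths avoiding a ball.

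\textbf{Lower bound.} This is the standard exponential divergence in hyperbolic spaces. Let $\gamma$ be a path in $S_n^{+t}$ from $x$ to $y$ of length $L=d_{S_n^{+t}}(x,y)$. Every point of $\gamma$ lies at distance $\ge n-t$ from $e$. Let $m$ be a midpoint-type point of a geodesic $[x,y]$. Using thin triangles $[e,x]\cup[x,y]\cup[y,e]$ with $d(e,x),d(e,y)\le n+t$, we get
\[
d(e,m)\le n+t-\tfrac{d(x,y)}{2}+O(\delta).
\]
The condition $d(x,y)\ge 9\delta+6t$ makes the distance from $e$ to $[x,y]$ drop by a definite amount $\tfrac{d(x,y)}{2}-2t-O(\delta)>0$ below the level $n-t$. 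We then apply the classical lemma (Bridson--Haefliger III.H.1.6): if a path of length $L$ joins the endpoints of a geodesic, then every point of the geodesic lies within $\delta|\log_2 L|+1$ of the path. This gives
\[
\delta\log_2 L+1\ \ge\ d(m,\gamma)\ \ge\ (n-t)-d(e,m)\ \ge\ \tfrac{d(x,y)}{2}-2t-O(\delta).
\]
Hence $L\ge A_1e^{\mu_1 d(x,y)}$ with $\mu_1=\tfrac{\ln 2}{2\delta}$, absorbing the $t$-dependence into $A_1$. The $-2t$ in the statement allows slack: we may first replace $x,y$ by nearby points of $S_n$, or apply the lemma to a slightly shortened path.

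\textbf{Upper bound.} We need a path in $S_n^{+t}$ of exponential length in $d(x,y)$. First move $x,y$ by at most $t$ to $x',y'\in S_n$, and then, as in Proposition~\ref{prop:basic_hyp_groups2}(ii), to points $x'',y''\in S_n$ on geodesic rays. Set $k=n-\lceil d(x,y)/2\rceil-c$ for a suitable constant $c$, and let $p=\pi_k(x'')$, $q=\pi_k(y'')$. By thinness, $d(p,q)$ is bounded by a constant $R$ depending only on $\delta,t$. The main step is to join $x''$ to $y''$ by a path outside $B_{n-1}$ of length $\le A e^{\mu d(x,y)}$, and then invoke Proposition~\ref{prop:basic_hyp_groups2}(iii), which costs only a factor $(1+2\delta)$.

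To build this path we use one-endedness via a uniform ``annulus connectivity'' statement. There is a constant $M$ such that any two points of $S_m$ at distance $\le R$ are joined by a path outside $B_{m-M}$ of length bounded by a constant $N$. By cocompactness it suffices to prove this for $e$ replaced by the basepoint and finitely many configurations, using that $G\setminus B(g,r)$ has one deep component. Iterating this level by level gives the path: at level $k$ we join $p,q$; we then push this path out to $S_{k+1}$ via rays (Proposition~\ref{prop:basics_hyp_groups1}(ii)), where consecutive points stay within bounded distance; each bounded step is again replaced by a path of length $\le N$ outside the corresponding ball. Each level multiplies the length by a constant $N'$. After about $d(x,y)/2+c$ levels the length is $\le N'^{\,d(x,y)/2+c}$, which yields $\mu_2$ and $A_2$.

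The main obstacle is this second step. One must make the connecting paths avoid $B_{n-1}$ while controlling the depth loss $M$ at each level; this means starting the construction at level $k+M$ and keeping the multiplicative constant uniform, independent of $n$. A cleaner alternative I would try first is to use that $\partial G$ is connected and locally connected (Bestvina--Mess/Bowditch–Swarup) with uniform local connectedness. Take $\xi,\eta$ the endpoints of the rays through $x'',y''$, so that $\rho(\xi,\eta)\lesssim a^{-n+d(x,y)/2}$ by Proposition~\ref{prop:dist_boundary_sphere_comparison}. Join them by a continuum $\tilde\gamma$ of diameter $\lesssim a^{-n+d(x,y)/2}$, and pull $\tilde\gamma$ back by Proposition~\ref{prop:basic_hyp_groups2}(i) to a $d_0$-path in $S_n$. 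Its points all lie in the shadow of a ball of radius about $d(x,y)/2$ around a point of $S_{n-d(x,y)/2}$. This shadow contains at most $Ke^{\alpha(d(x,y)/2+O(1))}$ points of $S_n$ by \eqref{eq exp growth of balls}, so after removing repetitions the $d_0$-path has at most that many steps. This gives the upper bound with $\mu_2=\alpha/2$.
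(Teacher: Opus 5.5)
Your lower bound is correct and follows the paper's argument. The paper also applies Bridson--Haefliger III.H.1.6 around the midpoint of $[x,y]$, after first moving $x,y$ to $S_n$, which is where its $-2t$ comes from. Applying the lemma directly to a path in $S_n^{+t}$, as you do, is equally valid and gives the slightly better exponent $\ln 2/(2\delta)$.

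\textbf{First upper-bound route: a genuine gap.} The uniform ``annulus connectivity'' you need is a bound $N$ independent of $m$. That is exactly Bestvina--Mess's property $({}^\ddagger_M)$, and it does not follow from one-endedness plus cocompactness.
\begin{itemize}
\item After translating $p$ to the basepoint, the removed ball becomes $B(p^{-1},m-M)$, whose radius tends to infinity. So there are infinitely many configurations, not finitely many; heuristically their limits are complements of horoballs, not balls.
\item One-endedness only tells you that $p$ and $q$ lie in the same unbounded component of $G\setminus B_{m-M}$. It gives no control on the length of a connecting path.
\end{itemize}
This uniformity is the deep input. As the paper recalls, Bestvina--Mess show that failure of $({}^\ddagger_M)$ for a one-ended group produces a global cut point of $\partial G$. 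So you need the Bowditch--Levitt--Swarup no-cut-point theorem, which is what the paper cites. Once $({}^\ddagger)$ is available, your level-by-level pushing is Hruska--Ruane's Lemma~4.1. The paper iterates it $\lceil d/2\rceil$ times, starting from the geodesic $[x',y']$, which already avoids $B(e,n-d/2-1)$. It then projects to $S_n$ using Proposition~\ref{prop:basic_hyp_groups2}(iii).

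\textbf{Second upper-bound route: sound, but a different proof.} It works provided you invoke the right statement. You need linear local connectedness of $(\partial G,\rho)$: $\xi,\eta$ lie in a connected set of diameter at most $C\rho(\xi,\eta)$, with $C$ uniform. This is a theorem of Bonk--Kleiner, itself resting on the absence of cut points; mere local connectedness is not enough. With it, the argument goes through:
\begin{itemize}
\item Proposition~\ref{prop:basic_hyp_groups2}(i) applies, since an $\epsilon$-chain inside the connected set suffices in place of a path.
\item By \eqref{eq distance boundary from S_n lowel bound}, every point of the pulled-back chain lies within $d(x'',y'')+O(1)$ of $x''$ in $S_n$.
\item Hence each such point is within $d/2+O(1)$ of the point of $[e,x'']$ at level $n-d/2-O(1)$.
\item Deleting loops then bounds the number of steps by $Ke^{\alpha(d/2+O(1))}$.
\end{itemize}

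\textbf{Comparison.} The paper stays in the Cayley graph and gets an inexplicit rate from Hruska--Ruane's constant. Your argument reuses the shadow dictionary the paper already sets up and gives the explicit rate $\mu_2=\alpha/2$. It also makes transparent that the only deep ingredient is the topology of $\partial G$. Both proofs ultimately rest on the same fact: $\partial G$ has no cut points.
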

The proof will follow from the next lemmas. The first one is standard.
\begin{lemma}\label{lem:disto_sphere_lowerbound}
    Let $X$ be a $\delta$-hyperbolic graph, where $\delta>0$, and let $t \geq 0$. Then for every $R>t$, and every $o,x,y \in X$ such that $x,y \in S(o,R)$, the following holds. If $d(x,y) \geq 9 \delta +4t$, then any path from $x$ to $y$ in the complement of $B(o,R-t)$ has length $\geq 2^{\frac{d(x,y)}{4 \delta}-1}$.
\end{lemma}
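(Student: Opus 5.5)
This is the classical exponential-divergence argument for hyperbolic spaces (Bridson--Haefliger III.H.1.6), adapted to a path that avoids the ball $B(o,R-t)$. Let $\gamma$ be a path from $x$ to $y$ in $X\setminus B(o,R-t)$, of length $\ell$. We may assume $\ell<\infty$ and, after reparametrising, that $\gamma$ is parametrised by arc length on $[0,\ell]$.

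\textbf{Step 1: a point of $\gamma$ close to $o$.} We will prove that $d(o,\gamma)\le \delta\log_2 \ell + c$, where $c$ is a small additive constant (roughly $R-\tfrac{d(x,y)}{2}$ plus a few $\delta$). Fix geodesics $[o,x]$ and $[o,y]$, and a geodesic $[x,y]$. Repeatedly bisect $\gamma$ to obtain a binary tree of geodesic triangles. At depth $k$, the chords join points $\gamma(i\ell/2^k)$ to $\gamma((i+1)\ell/2^k)$, so they have length at most $\ell/2^k$. Stop at depth $N=\lceil\log_2\ell\rceil$, where the chords have length $\le 1$. Each triangle is $\delta$-slim, so any point $p$ of $[x,y]$ lies within $N\delta + 1$ of some point of $\gamma$. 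This is the standard bound $d(p,\gamma)\le\delta|\log_2\ell|+1$.

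\textbf{Step 2: the midpoint of $[x,y]$ is close to $o$.} Let $m$ be the midpoint of $[x,y]$. Consider the triangle with sides $[o,x]$, $[o,y]$, $[x,y]$. By $\delta$-slimness, $m$ is $\delta$-close to a point $z$ on, say, $[o,x]$. Arguing as in the proof of Proposition~\ref{prop:dist_boundary_sphere_comparison}, we get
\[
d(x,z)\ge \tfrac{d(x,y)}{2}-\delta, \qquad\text{hence}\qquad d(o,m)\le R-\tfrac{d(x,y)}{2}+2\delta .
\]

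\textbf{Step 3: combine.} Step 1 gives a point $q\in\gamma$ with $d(m,q)\le \delta\log_2\ell+2$. Since $\gamma$ avoids $B(o,R-t)$, we have $d(o,q)>R-t$. The triangle inequality then yields
\[
R-t< R-\tfrac{d(x,y)}{2}+2\delta+\delta\log_2\ell+2,
\qquad\text{that is,}\qquad
\delta\log_2\ell > \tfrac{d(x,y)}{2}-t-2\delta-2 .
\]
Now use the hypothesis $d(x,y)\ge 9\delta+4t$, which gives $t\le \tfrac{d(x,y)-9\delta}{4}$. Substituting,
\[
\delta\log_2\ell > \tfrac{d(x,y)}{4}+\tfrac{9\delta}{4}-2\delta-2 .
\]
This is at least $\tfrac{d(x,y)}{4}-\delta$, provided the additive $+2$ can be absorbed, i.e.\ provided $\delta\ge 2$ or the $+1$ terms are handled more carefully. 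That gives $\ell\ge 2^{d(x,y)/(4\delta)-1}$.

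\textbf{Main obstacle.} The only delicate point is the bookkeeping of additive constants, so that the exponent comes out exactly as $\tfrac{d(x,y)}{4\delta}-1$ for all $\delta>0$. First I would tighten Step 1 by stopping the subdivision when the chords have length $\le 1$ and using that $X$ is a graph, so that points of $\gamma$ at parameter distance $\le 1/2$ are joined within the path. Then $d(m,\gamma)\le\delta\log_2\ell+\tfrac12$. If that is still not enough, the remaining slack comes from the $9\delta$ versus $8\delta$ in the hypothesis, which I would allocate to absorb these constants. In the worst case the statement can be kept as is, with the constant arising only in the $-1$ of the exponent.
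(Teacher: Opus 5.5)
Your proof is the paper's argument. It combines the midpoint estimate $d(o,m)\le R-\frac{d(x,y)}{2}+2\delta$ with Bridson--Haefliger III.H.1.6; the paper phrases the final step as $B\bigl(m,\frac{d(x,y)}{4}\bigr)\subseteq B(o,R-t)$, so that $\gamma$ avoids this ball, which is your Step 3 triangle inequality read in the other direction.

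The additive-constant issue you flag is also present in the paper, since the $+1$ in III.H.1.6 literally gives the exponent $\frac{d(x,y)}{4\delta}-\frac{1}{\delta}$. Citing that $+1$ bound directly, instead of your lossier $N\delta+1$ count, and using the extra $\frac{\delta}{4}$ that the hypothesis $9\delta$ provides, handles $\delta\ge\frac45$. For arbitrary $\delta>0$, your idea of using the graph structure works if you bisect the edge path at integer times and take the last-level chords to be edges of $\gamma$. This gives $d(m,\gamma)\le\delta\lceil\log_2\ell\rceil$, and hence $\ell>2^{\frac{d(x,y)}{4\delta}-\frac34}$.
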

\begin{proof}
    Set $d:= d(x,y)$, let $\alpha$ be a geodesic segment $[x,y]$ and let $m$ be its midpoint. By considering the geodesic triangle $[o,x] \cup \alpha \cup [y,o]$ and using its $\delta$-slimness, we show, as in the proof of Proposition~\ref{prop:dist_boundary_sphere_comparison}, that
    $$ d(o,m) \leq R - \frac{d}{2} + 2 \delta.$$
    By the triangle inequality, and since $d > 8 \delta +4t$, we obtain 
    $$B\left(m, \frac{d}{4}\right) \subseteq B(o,R-t).$$ 
    Let $\gamma$ be any path from $x$ to $y$ contained in $X \setminus B(o,R-t)$. Then $\gamma$ avoids $B(m,\frac{d}{4})$, and the conclusion follows from \cite[Chap.~3, Prop.~1.6]{bridson2013metric}.
\end{proof}
For the other inequality, we need one-endedness. Before that, let us recall the following. Let $D$ be the constant of Proposition~\ref{prop:basics_hyp_groups1}. In \cite{bestvina1991boundary}, Bestvina--Mess introduced the following property $({}^\ddagger_M)$, for $M>0$. We say $G$ satisfies the property $({}^\ddagger_M)$ if there exists a constant $L>0$, such that for every $R>0$ and for every pair of points $x,y \in S_R $ with $ d(x,y) \leq M$, there exists a path in $G \setminus B_{R-D}$ from $x$ to $y$ of length at most $L$. Bestvina--Mess show that if $({}^\ddagger_M)$ fails to hold for some $M>0$, then the boundary must contain a cut point. However, it was shown later by Bowditch, Levitt, and Swarup in \cite{Bowditch1999,Levitt1998,Swarup1996} that the boundary of a one-ended hyperbolic group can never contain a cut point. Therefore, $({}^\ddagger_M)$ holds for every $M>0$ for one-ended hyperbolic groups. As a consequence, we have
\begin{lemma}\label{lem:HruskRuaneLemma}(\cite[Lemma 4.1]{HruskaRuane})
Let $G$ be a one-ended hyperbolic group. There exists $\lambda>0$ such that the following holds. If $c$ and $c'$ are geodesic rays emanating from $e$, and $\alpha$ is a path of length at most $\ell$ from $c$ to $c'$ in the complement of the ball $B_r$, then there exists a path $\beta$ of length at most $\lambda \ell$ from $c$ to $c'$ in the complement of the ball $B_{r+1}$.
\end{lemma}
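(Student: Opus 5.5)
We must prove Lemma~\ref{lem:HruskRuaneLemma}: given a path $\alpha$ of length at most $\ell$ from $c$ to $c'$ outside $B_r$, we want a path of length at most $\lambda\ell$ from $c$ to $c'$ outside $B_{r+1}$. The plan is to push $\alpha$ outward one step, replacing each edge by a short detour. The detours come from property $({}^\ddagger_M)$, which holds for every $M$ by the Bestvina--Mess / Bowditch--Levitt--Swarup discussion above.

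\emph{Step 1: push $\alpha$ outward.} Let the vertices of $\alpha$ be $v_0\in c,\dots,v_k\in c'$, with $k\le \ell$ and $|v_i|\ge r+1$ (vertices avoid the open ball, or we adjust by one). Consider an index $i$ with $|v_i|\le r+D+1$. Use Proposition~\ref{prop:basics_hyp_groups1}(ii) to pick a geodesic ray $c_i$ from $e$ with $d(v_i,c_i)\le D$. Set $w_i:=c_i(R)$ with $R:=r+2D+2$, say. Then $w_i$ lies on a ray, and $d(v_i,w_i)\le 3D+3$.

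\emph{Step 2: bound the new steps.} Consecutive points satisfy $d(w_i,w_{i+1})\le 6D+7=:M$, and $w_i,w_{i+1}\in S_R$. By $({}^\ddagger_M)$ there is a path of length $\le L$ from $w_i$ to $w_{i+1}$ in $G\setminus B_{R-D}\subset G\setminus B_{r+1}$. Vertices already far out (say $|v_i|> r+D+1$) are kept. The transition between a kept vertex and a pushed one is handled by the geodesic $[v_i,w_i]$. This geodesic stays outside $B_{r+1}$ once we choose $R$ large relative to $D$, using $|v_i|\ge r+1$ and the bound $d(v_i,w_i)\le 3D+3$. To guarantee this cleanly, I would instead push every vertex radially along $[e,v_i]$-type rays to radius $\max(|v_i|,R)$. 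For $v_0\in c$ and $v_k\in c'$, take $c_0=c$ and $c_k=c'$, so the endpoints $w_0,w_k$ stay on $c,c'$.

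\emph{Step 3: total length.} Concatenating gives a path of length at most $k\,(L+2(3D+3))\le \lambda\ell$ with $\lambda:=L+6D+6$, independent of $r$ and $\ell$.

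The main obstacle is the bookkeeping: keeping the connecting segments between original and pushed vertices outside $B_{r+1}$ uniformly, and ensuring $d(w_i,w_{i+1})$ is bounded when neighbouring vertices have very different norms. The fallback is to project all vertices using $\pi_{R'}$-style projections to level sets and invoke Proposition~\ref{prop:basics_hyp_groups1}(iii) together with (i). As the citation indicates, one can also simply quote \cite[Lemma 4.1]{HruskaRuane}.
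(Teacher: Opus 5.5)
Your strategy (push $\alpha$ outward, replacing each edge by a bounded detour supplied by $({}^\ddagger_M)$) is the mechanism the paper has in mind. The paper gives no argument beyond citing Hruska--Ruane and recalling that $({}^\ddagger_M)$ holds for every $M$. However, the construction you actually carry out in Steps~1--3 contains a step that fails, and the fix is only gestured at.

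\textbf{The failing step.} You claim the transition segment $[v_i,w_i]$ avoids $B_{r+1}$ once $R$ is large. This is unjustified, and enlarging $R$ cannot help. A transition occurs at a pushed vertex with $|v_i|=r+D+1$. Let $c_i(s_i)$ be the point of $c_i$ within $D$ of $v_i$. Slimness of the triangle with vertices $v_i$, $c_i(s_i)$, $w_i$ (one side being $c_i([s_i,R])$) only gives $|p|\ge |v_i|-D-\delta=r+1-\delta$ for $p\in[v_i,w_i]$. When $s_i=|v_i|-D$, nothing prevents $[v_i,w_i]$ from passing through $c_i(s_i)$, which has norm $r+1$ and so lies in the closed ball $B_{r+1}$. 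The dip sits near $c_i(s_i)$, whose norm is dictated by $|v_i|$, not by $R$. (Also, your choice of $R$ only yields $d(v_i,w_i)\le 4D+1$, not $3D+3$.) Since the concatenation and length count of Step~3 rest on these segments, the path you build is not known to avoid $B_{r+1}$.

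\textbf{The repair.} Your fallback, pushing every vertex to radius $\max(|v_i|,R)$, is the correct fix, provided you discard the original vertices and the segments $[v_i,w_i]$ entirely. The projections $\pi_{R'}$ do not help, since they only move points inward. Take $R:=r+D+2$, rays $c_i$ from $e$ with $d(v_i,c_i(s_i))\le D$ (with $c_0=c$ and $c_k=c'$), and $w_i:=c_i(\rho_i)$ where $\rho_i:=\max(|v_i|,R)$.
\begin{itemize}
\item Then $d(v_i,w_i)\le 3D+1$, hence $d(w_i,w_{i+1})\le 6D+3$.
\item Also $|\rho_i-\rho_{i+1}|\le 1$, because adjacent vertices have norms differing by at most one. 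So there are never ``very different norms'' to handle.
\item If $\rho_i\ne\rho_{i+1}$, slide the outer point one unit back along its ray. Both points now lie on $S_\rho$ with $\rho=\min(\rho_i,\rho_{i+1})\ge R$, at distance at most $6D+4$.
\item Property $({}^\ddagger_{6D+4})$ then provides a connector of length at most $L$ inside $G\setminus B_{\rho-D}\subseteq G\setminus B_{r+1}$. Add at most one edge of a ray at norm $\ge R$.
\end{itemize}
Concatenating from $w_0\in c$ to $w_k\in c'$ yields $\beta$ of length at most $(L+1)k\le (L+1)\ell$. Your first scheme can also be rescued by raising both the keep-threshold and $R$ by a constant larger than $\delta$.

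\textbf{A normalisation.} Some condition such as $\ell\ge 1$ is needed. For $\ell=0$ the lemma would require a point of $c\cap c'$ outside $B_{r+1}$. No such point exists if $c$ and $c'$ have distinct endpoints and their last common vertex has norm $r+1$. This is harmless for Lemma~\ref{lem:disto_sphere_upperbound}, where $\ell=d+2L$.
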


One can therefore ``push'' outward a path between points in $S_n$, which will yield the upper bound.

\begin{lemma}\label{lem:disto_sphere_upperbound}
    Let $G$ be a one-ended hyperbolic group, and let $t_0$ be as in Proposition~\ref{prop:basic_hyp_groups2}. There exist constants $\mu,C>0$ such that the following holds. For every $n \in \mathbb{N}$, for every $t \geq t_0$, and every $x,y \in S_n$, there exists a path from $x$ to $y$, in $S_n^{+t}$, of length $\leq C e^{\mu d(x,y)}$.
\end{lemma}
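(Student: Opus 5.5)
We prove Lemma~\ref{lem:disto_sphere_upperbound} by starting from a short path between $x$ and $y$ that dips toward $e$, and then pushing it outward step by step with Lemma~\ref{lem:HruskRuaneLemma}. Each push costs a multiplicative factor $\lambda$, and about $d(x,y)$ pushes are needed, which gives the exponential bound.

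Fix $x,y \in S_n$ and set $d := d(x,y)$. By Proposition~\ref{prop:basics_hyp_groups1}(ii), choose geodesic rays $c,c'$ from $e$ and points $x''\in c$, $y''\in c'$ within $2D$ of $x,y$. As in the proof of Proposition~\ref{prop:basic_hyp_groups2}(ii), we may assume $x'',y''\in S_n$. Then $d(x'',y'')\le d+4D$.

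Let $m$ be the midpoint of a geodesic $[x'',y'']$. As in the proof of Proposition~\ref{prop:dist_boundary_sphere_comparison}, this geodesic stays outside $B_{r_0}$ with $r_0 := n - \tfrac{d}{2} - c_0$, where $c_0$ is a uniform constant (and $r_0 := 0$ if this is negative). Indeed, by slimness of the triangle $[e,x'']\cup[x'',y'']\cup[y'',e]$, every point of $[x'',y'']$ is within $\delta$ of a point of $[e,x'']$ or $[e,y'']$. This bounds its distance to $e$ from below by $n-\tfrac{d+4D}{2}-O(\delta)$. So we get a path $\alpha_0$ from $c$ to $c'$, of length $\ell_0 \le d+4D$, contained in $G\setminus B_{r_0}$.

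Now apply Lemma~\ref{lem:HruskRuaneLemma} iteratively. After $k$ applications we obtain a path $\alpha_k$ from $c$ to $c'$ in $G\setminus B_{r_0+k}$ of length at most $\lambda^k \ell_0$. Take $k := n - r_0 \le \tfrac d2 + c_0$. This gives a path $\beta$ from a point $c(s)$ to a point $c'(s')$ avoiding $B_{n}$, hence avoiding $B_{n-1}$, of length at most $\lambda^{d/2+c_0}(d+4D)$.

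Next we reattach the endpoints. The endpoints $c(s)$ and $c'(s')$ lie on the rays, possibly far out, so we do not walk along the rays directly. Instead we project. By Proposition~\ref{prop:basic_hyp_groups2}(iii), applied to the path $\beta$ in $G\setminus B_{n-1}$, its projection $\pi_n(\beta)$ yields a path in $S_n^{+t}$ of length at most $(1+2\delta)|\beta|$. This path joins $\pi_n(c(s))$ to $\pi_n(c'(s'))$.

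We then control how far these projections are from $x''$ and $y''$. Since $c(s)$ lies on the geodesic ray $c$ through $x''$, the projection $\pi_n(c(s))$ is a point of $S_n$ on a geodesic $[e,c(s)]$, which fellow-travels $c$. Hence $d(\pi_n(c(s)),x'')\le d_0$ by the same argument as Proposition~\ref{prop:basics_hyp_groups1}(i). Alternatively, we can choose the fixed geodesics so that $[e,c(s)]\subset c$, giving $\pi_n(c(s)) = x''$ exactly.

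Finally, we concatenate the pieces $[x,x'']$, then the short hop from $x''$ to $\pi_n(c(s))$, then $\pi_n(\beta)$, then the analogous pieces at $y$. All of these lie in $S_n^{+t}$ because $t\ge t_0 = 3\max\{D,d_0\}$. The total length is at most $C e^{\mu d}$ with $\mu=\tfrac12\log\lambda$ (taking $\lambda\ge 2$, say), once the polynomial factor $d+4D$ and the additive constants are absorbed into $C$ and a slightly larger $\mu$.

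The main obstacle is keeping every constant uniform in $n$, $t$ and the basepoint. The delicate point is the claim that the initial geodesic avoids $B_{n-d/2-c_0}$, which needs a careful slim-triangles estimate. The reattachment of the endpoints of the pushed path to $x$ and $y$ inside $S_n^{+t}$ also needs care. Handling the case $r_0<0$ (when $d>2n$) is trivial, since then at most $n\le d/2$ pushes are needed.
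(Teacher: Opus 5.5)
Your proposal is correct and is essentially the paper's proof. A geodesic joining points near $x$ and $y$ avoids $B_{n-d/2-O(1)}$; it is pushed outward about $d/2$ times by Lemma~\ref{lem:HruskRuaneLemma} at a cost of a factor $\lambda$ per push; the result is projected to $S_n$ via Proposition~\ref{prop:basic_hyp_groups2}(iii) and reattached to $x,y$. Starting directly from $[x'',y'']$ even spares you the paper's separate treatment of $d\le 2D$ through $({}^\ddagger_{2D})$.

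Two small points. The step you flag as delicate needs no slim triangles, and your slimness sketch as written does not quite deliver it. The triangle inequality alone gives $d(e,z)\ge n-\min\{d(z,x''),d(z,y'')\}\ge n-\frac{d(x'',y'')}{2}$ for every $z\in[x'',y'']$. Also, when $r_0<0$ you should take $B_{r_0}=\emptyset$ rather than setting $r_0:=0$, since $[x'',y'']$ may pass through $e$.
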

\begin{proof}
    Let $D$ be the constant of Proposition~\ref{prop:basics_hyp_groups1}, and let $L>0$ for which $({}^\ddagger_{2D})$ holds. If $d(x,y) \leq 2D$, then property $({}^\ddagger_{2D})$ implies that there exists a path $\gamma$ of length $\leq L$ from $x$ to $y$ in the complement of $B_{n-D}$. By projecting back to $S_n$ any sub-path of $\gamma$ that leaves $S_n^{+t}$, by item $(iii)$ of Proposition~\ref{prop:basic_hyp_groups2}, there is a path from $x$ to $y$ in $S_n^{+t}$ of length $\leq L (1+2\delta)$.
    
    Suppose that $d(x,y) > 2D$, and let $\gamma$ be a geodesic segment $[x,y]$. Set $d:= d(x,y)$. Then $\gamma$ lies in the complement of $B(e, n - \frac{d}{2}-1)$. Let $c,c'$ be geodesic rays emanating from $e$, and points $x' \in c$ and $y' \in c'$ such that
    $$
    d(x, x') \leq D \quad \text{and} \quad d(y, y') \leq D,
    $$
    as in Proposition~\ref{prop:basics_hyp_groups1}. Up to replacing $D$ by $3D$ in the previous inequalities, we assume that $x',y' \in S_n$. Therefore, there exists a path $\gamma'$ from $x'$ to $y'$, of length $ \leq \ell := d+2L$, that lies in the complement of $B(e, n - \frac{d}{2}-1)$. In particular, $\gamma'$ is a path between $c$ and $c'$. Therefore, there exists a path of length $\leq \lambda \ell$ between $c$ to $c'$ in the complement of $B(e, n - \frac{d}{2})$, where $\lambda$ is the constant from Lemma~\ref{lem:HruskRuaneLemma}. We can suppose that $\lambda >1$. By repeating the process $\lceil \frac{d}{2}  \rceil$ times, we get a path $\alpha$ between $c$ and $c'$ of length $\leq \lambda^{\lceil \frac{d}{2}  \rceil} \ell$ in the complement of $B(e,n-1)$. By item $(iii)$ of Proposition~\ref{prop:basic_hyp_groups2}, projecting $\alpha$ to $S_n$ yields a path $\beta$ from $x'$ to $y'$ in $S_n^{+t}$ of length $\leq (1+2\delta) \lambda^{\lceil \frac{d}{2}  \rceil} \ell$. Since $d(x,x') \leq 3D$, $d(y,y') \leq 3D$, and $3D \leq t$, there exists a path from $x$ to $y$ in $S_n^{+t}$ of length $\leq (1+2\delta) \lambda^{\lceil \frac{d}{2}  \rceil} (d+2L) + 6D$. Since $\lambda>1$, there exists a constant $C \ge 1$, depending only on $\delta,\lambda,L$ and $D$ (and not on $n$ or $t$), such that for every $d \ge 0$,
\[
(1+2\delta)\,\lambda^{\lceil d/2 \rceil}(d+2L) + 4D \le C e^{\mu d},
\]
where $\mu := \log(\lambda)$. This completes the proof.
\end{proof}
\begin{proof}[Proof of Proposition~\ref{prop:distortion_of_spheres}]
    $G$ is one-ended so $\delta >0$. Let $t \geq t_0$ be fixed. Let $n>t$ and $x,y \in S_n^{+t}$ such that $d(x,y) \geq 9 \delta + 6t$. Let $ x',y' \in S_n$ be such that 
    $$ d(x, x') \leq t \quad \text{and} \quad d(y, y') \leq t.$$
    Then $d(x',y') \geq 9 \delta +4t$. By Lemma~\ref{lem:disto_sphere_lowerbound},
    $$ d_{S_n^{+t}}(x',y') \geq 2^{\frac{d(x',y')}{4 \delta}-1}.$$
    Therefore,
    $$ d_{S_n^{+t}}(x,y) \geq 2^{\frac{d(x,y)-2t}{4 \delta}-1}-2t.$$
    Set $\mu_1 := \frac{\log 2}{4 \delta} $ and $A_1:= 2^{-\frac{t}{2\delta}-1} $. For the upper bound, we have that 
    $$d_{S_n^{+t}}(x',y') \leq C e^{\mu d(x',y')}, $$
    where $\mu,C>0$ are the constants from Lemma~\ref{lem:disto_sphere_upperbound}. Therefore,
    $$d_{S_n^{+t}}(x,y) \leq d_{S_n^{+t}}(x',y') +2t  \leq C e^{\mu (d(x,y)+2)} +2t. $$
    Set $A_2:= C e^{2 \mu} +2t$ and $\mu_2:= \mu$. 
\end{proof}

\subsubsection{Shadows}
In the rest of this subsection, $\partial G$ is equipped with its visual metric $\rho$ as in \Cref{subsection:background_prelims}, and constants $C_1,C_2$ as in Proposition~\ref{prop:dist_boundary_sphere_comparison}.
\begin{definition}\label{def:shadows}
    If $G$ is a non-elementary hyperbolic group, let $t_0 \geq 0$ be as in Proposition~\ref{prop:basic_hyp_groups2}. 
    For $x \in G$, we define its \emph{shadow} $\sh(x) \subset \partial G$ to be the set of boundary points $\xi$ for which there exists a geodesic ray $[e,\xi)$ passing through $B(x,t_0)$. If $A \subset G$ is a subset, we define its shadow by
    $$
    \sh(A) := \bigcup_{a \in A} \sh(a).
    $$
\end{definition}
By the choice of $t_0$, we have $t_0 \geq D$, where $D$ is the constant from Proposition~\ref{prop:basics_hyp_groups1}. So for every $x \in G$ there exists $x' \in B(x,t_0)$ such that the geodesic segment $[e,x']$ extends to a geodesic ray $[e,\xi)$. In particular, $\sh(x) \neq \emptyset$.
\begin{lemma}\label{Lem:empty_intersect_shadows}
    Let $G$ be a non-elementary hyperbolic group, $t_0$ as in Proposition~\ref{prop:basic_hyp_groups2}, and $t \geq t_0$. Then for any $n\in \mathbb{N}$, if $x,y \in S_n^{+t}$ and $d(x,y) \geq 5t_0+2t$, then
        $$\sh(x) \cap \sh(y) = \emptyset. $$
\end{lemma}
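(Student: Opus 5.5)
We argue by contradiction. Suppose $\xi \in \sh(x) \cap \sh(y)$. By Definition~\ref{def:shadows}, there are geodesic rays $\gamma_1 = [e,\xi)$ and $\gamma_2 = [e,\xi)$ and points $x' \in \gamma_1 \cap B(x,t_0)$, $y' \in \gamma_2 \cap B(y,t_0)$. The rays $\gamma_1,\gamma_2$ are asymptotic geodesic rays from $e$. Proposition~\ref{prop:basics_hyp_groups1}(i) therefore gives
$$d(\gamma_1(s),\gamma_2(s)) \le d_0 \quad \text{for all } s \ge 0.$$
The plan is to show from this that $d(x,y)$ must be small, below $5t_0+2t$.

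First we locate $x'$ and $y'$ radially. Since $x,y \in S_n^{+t}$, we have $|d(e,x)-n| \le t$ and likewise for $y$. Hence $x' = \gamma_1(s_1)$ and $y' = \gamma_2(s_2)$ with $s_1,s_2 \in [n-t-t_0,\, n+t+t_0]$. This gives $|s_1-s_2| \le 2t+2t_0$.

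Next we chain the estimates:
$$d(x,y) \le d(x,x') + d(\gamma_1(s_1),\gamma_2(s_1)) + d(\gamma_2(s_1),\gamma_2(s_2)) + d(y',y).$$
The right-hand side is at most $t_0 + d_0 + |s_1-s_2| + t_0$, hence at most $2t_0 + d_0 + 2t + 2t_0$. Using $d_0 \le t_0/3 \le t_0$, which holds since $t_0 = 3\max\{D,d_0\}$, we get $d(x,y) \le 5t_0 + 2t$.

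This bound is not strict, so it does not yet contradict the hypothesis $d(x,y) \ge 5t_0+2t$. The fix is to use $d_0 \le t_0/3 < t_0$, which gives the strict inequality $d(x,y) < 5t_0+2t$ and hence the contradiction. If one were worried that $t_0$ could be $0$, note that $d_0 = 20\delta+1 > 0$, so $t_0 > 0$.

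The only real point of care is to keep the radial bookkeeping consistent. The shadow condition is stated via $B(x,t_0)$, while membership in $S_n^{+t}$ only controls $d(e,x)$ up to $t$. This is why the slack $2t$ appears in the threshold. Everything else is a direct application of Proposition~\ref{prop:basics_hyp_groups1}(i).
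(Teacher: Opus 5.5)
Your proof is correct and is essentially the paper's argument in contrapositive form. The paper takes arbitrary $\xi\in\sh(x)$ and $\eta\in\sh(y)$, slides the corresponding rays to points $x'',y''\in S_n$ with $d(x'',y'')\geq 5t_0+2t-2(2t_0+t)=t_0\geq d_0$, and applies the visual-metric lower bound \eqref{eq distance boundary from S_n lowel bound} to get $\rho(\xi,\eta)>0$. You instead assume a common $\xi$ and use Proposition~\ref{prop:basics_hyp_groups1}(i), which is derived from that same bound, together with the triangle inequality; your bookkeeping ($|s_1-s_2|\le 2t+2t_0$, and $d_0\le t_0/3<t_0$ for strictness) is right.
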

\begin{proof}
Let $\xi \in \sh(x)$, and let $x' \in B(x,t_0)$ be such that $[e,\xi)$ extends $[e,x']$. Let $x''$ be the intersection of $[e,\xi)$ with $S_n$, so $d(x,x'') \leq 2t_0+t$. Let $\eta \in \sh(y)$, and let $y'' \in S_n$ be constructed similarly. So $d(x'',y'') \geq 5t_0+2t - 2(2t_0+t)= t_0$. Then 
$$\rho(\xi, \eta ) \geq \frac{1}{C_1}a^{-n}>0,$$ 
where $C_1$ is the constant from Proposition~\ref{prop:dist_boundary_sphere_comparison}.
\end{proof}
A family of points $\{x_i\}_{i \in I}$ in a metric space $(X,d)$ is called \emph{$r$-separated}, for some $r\geq 0$, if
$$
d(x_i,x_j) \geq r
$$
for every distinct $i,j \in I$.
\begin{lemma}\label{lem:separated_family_boundary}
    Let $G$ be a non-elementary hyperbolic group, and let $t_0$ be as in Proposition~\ref{prop:basic_hyp_groups2}. Let $t \geq t_0, $ $n \in \mathbb{N}$, $\omega>0$, and $Z \subset S_n^{+t}$ a subset. If there exists $\{\zeta_i\}_{i \in I}$ in $\sh(Z) \subset \partial G$ a $\omega$-separated family, then there exist $\{z_i\}_{i \in I}$ in $Z$ a $d_\omega$-separated family, with
    $$ d_\omega := 2n + 2 \log_a \left(\frac{\omega}{C_2}\right) - 4t_0-2t,$$
    where $C_2$ is the constant from Proposition~\ref{prop:dist_boundary_sphere_comparison}
\end{lemma}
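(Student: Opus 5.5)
For each $i \in I$ we pick a preimage point and then compare distances on the sphere with distances in the boundary. Since $\zeta_i \in \sh(Z)$, there is some $z_i \in Z$ with $\zeta_i \in \sh(z_i)$. By Definition~\ref{def:shadows}, this means there exist $z_i' \in B(z_i,t_0)$ and a geodesic ray $[e,\zeta_i)$ passing through $z_i'$. Since $z_i \in S_n^{+t}$, we have $d(e,z_i') \ge n - t - t_0$. We may assume $n > t + t_0$; otherwise $d_\omega$ is at most a constant and the claim is essentially vacuous, and we treat this case separately. Under this assumption the ray $[e,\zeta_i)$ meets $S_n$ at a point $x_i$ with
$$ d(x_i, z_i') = |d(e,z_i') - n| \le t + t_0,$$
so that $d(x_i, z_i) \le 2t_0 + t$, exactly as in the proof of Lemma~\ref{Lem:empty_intersect_shadows}.

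Now take distinct $i,j \in I$. The points $x_i, x_j \in S_n$ lie on geodesic rays $[e,\zeta_i)$ and $[e,\zeta_j)$, and by hypothesis $\rho(\zeta_i,\zeta_j) \ge \omega$. Hence \eqref{eq:if_rho_big_d_big} of Proposition~\ref{prop:dist_boundary_sphere_comparison} gives
$$ d(x_i,x_j) \ge 2n + 2\log_a\left(\frac{\omega}{C_2}\right).$$
By the triangle inequality,
$$ d(z_i,z_j) \ge d(x_i,x_j) - 2(2t_0+t) \ge d_\omega.$$
This also shows that the $z_i$ are pairwise distinct whenever $d_\omega > 0$, so $\{z_i\}_{i\in I}$ is a genuinely indexed $d_\omega$-separated family in $Z$. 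If $d_\omega \le 0$, the separation condition is trivial, though distinctness of the $z_i$ would then not follow from this argument.

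The only step needing care is the bookkeeping showing that $x_i$ is well defined on $S_n$ and satisfies $d(x_i,z_i) \le 2t_0 + t$. This uses that geodesic rays from $e$ pass through every sphere, and that $z_i'$ lies at distance at most $t + t_0$ from $S_n$. The rest is a direct application of Proposition~\ref{prop:dist_boundary_sphere_comparison}.
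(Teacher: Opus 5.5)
Your proof is correct and is essentially the paper's argument: the paper also replaces $z_i'$ by the point where the ray $[e,\zeta_i)$ meets $S_n$, at distance at most $2t_0+t$ from $z_i$. It then applies \eqref{eq:if_rho_big_d_big} and concludes by the triangle inequality, and your case split on $n > t+t_0$ is unnecessary, since $\max\{0,\, n-t-t_0\} \le d(e,z_i') \le n+t+t_0$ already gives $|d(e,z_i')-n| \le t+t_0$ for every $n$.
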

\begin{proof}
    For every $i \in I$, $\zeta_i \in \sh(Z)$, so there exists $z_i \in Z$ and $z_i' \in B(z_i,t_0)$ such that $[e,z_i']$ extends to $[e,\zeta_i)$. Up taking the ball $ B(z_i,2t_0+t)$ instead of $ B(z_i,t_0)$, we can assume that $z_i' \in S_n$. For every $i \ne j$ in $I$, $\rho(\zeta_i, \zeta_j) \geq \omega$, so by \eqref{eq:if_rho_big_d_big},
    $$ d(z_i',z_j') \geq 2n + 2 \log_a \left(\frac{\omega}{C_2}\right).$$
    Since $d(z_i,z_i') \leq 2t_0+t$, we get
    $$d(z_i,z_j) \geq 2n + 2 \log_a \left(\frac{\omega}{C_2}\right) -4t_0-2t.$$
    Set $d_\omega := 2n + 2 \log_a \left(\frac{\omega}{C_2}\right) -4t_0 - 2t $.
\end{proof}

\begin{prop}\label{prop:shadows_separated_by_shadows}
    Let $G$ be a one-ended hyperbolic group. Let $t_0$ be as in Proposition~\ref{prop:basic_hyp_groups2}, and let $t \geq t_0$. Then there exists a constant $\theta >0$ such that the following holds. For every $n \in \mathbb{N}$, every subset $Z \subseteq S_n^{+t}$, and every pair of points $x,y \in S_n^{+t}$ contained in different connected components of $S_n^{+t} \setminus Z$ and satisfying
    $$
    d(\{x,y\},Z) \geq \theta,
    $$
    the shadows $\sh(x)$ and $\sh(y)$ are disjoint and lie in different connected components of $\partial G \setminus \sh(Z)$. In particular, $\sh(Z)$ separates $\partial G$.
\end{prop}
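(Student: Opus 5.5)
We argue by contradiction, showing that any path in $\partial G\setminus \sh(Z)$ from $\sh(x)$ to $\sh(y)$ would produce a path in $S_n^{+t}\setminus Z$ from $x$ to $y$.

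\textbf{Disjointness.} Choose $\theta \ge 5t_0+2t+1$ large; other constraints will be added below. Since $d(x,y)$ is large, Lemma~\ref{Lem:empty_intersect_shadows} gives $\sh(x)\cap\sh(y)=\emptyset$. The same lemma applied to the pairs $(x,z)$ and $(y,z)$ for every $z\in Z$ gives $\sh(x)\cap \sh(Z)=\emptyset=\sh(y)\cap\sh(Z)$. Hence $\sh(x),\sh(y)\subset \partial G\setminus\sh(Z)$.

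\textbf{Pulling a boundary path back.} Pick $\xi\in\sh(x)$ and $\eta\in\sh(y)$, and suppose some continuous path $\tilde\gamma$ in $\partial G\setminus\sh(Z)$ joins $\xi$ to $\eta$. Apply Proposition~\ref{prop:basic_hyp_groups2}(i):
\begin{itemize}
\item Let $x'\in S_n$ be the point of the ray $[e,\xi)$ on $S_n$. This ray passes through $B(x,t_0)$, and $x\in S_n^{+t}$, so $d(x,x')\le 2t_0+t$. Define $y'$ similarly.
\item Proposition~\ref{prop:basic_hyp_groups2}(i) yields a $d_0$-chain $x'=x_0,\dots,x_k=y'$ in $S_n$, where each $x_i$ lies on a ray $[e,\xi_i)$ with $\xi_i\in\tilde\gamma$.
\end{itemize}

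\textbf{The chain avoids $Z$.} Each $\xi_i\notin\sh(Z)$. Therefore no ray $[e,\xi_i)$ meets $B(z,t_0)$ for any $z\in Z$, so $d(x_i,Z)>t_0$.

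\textbf{Filling in the chain.} Join consecutive $x_i$ by geodesics of length $\le d_0$. These geodesics lie in $S_n^{+t}$ since $d_0\le t$. They avoid $Z$ provided $d_0/2<t_0$, which holds because $t_0=3\max\{D,d_0\}$: a point on $[x_i,x_{i+1}]$ is within $d_0/2$ of an endpoint, hence at distance $>t_0-d_0/2>0$ from $Z$. Finally connect $x$ to $x'$ (and $y$ to $y'$) by geodesics of length $\le 2t_0+t$. These lie in $S_n^{+2t_0+2t}$, not necessarily in $S_n^{+t}$, so this step needs care.

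\textbf{Main obstacle: the end segments.} Here I expect the main difficulty. I would instead project $[x,x']$ via $\pi_n$ (Proposition~\ref{prop:basics_hyp_groups1}(iii)) when $x$ lies outside $B_{n-1}$, or otherwise use Proposition~\ref{prop:basic_hyp_groups2}(iii). This produces a path in $S_n^{+t}$ of length bounded by a constant $c(t,t_0,\delta)$. Such a path avoids $Z$ as soon as $\theta> c$, since it stays within $c$ of $x$, where $d(x,Z)\ge\theta$. The same applies at $y$. Taking $\theta$ larger than all these constants, the concatenation is a path from $x$ to $y$ in $S_n^{+t}\setminus Z$, a contradiction.

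\textbf{Conclusion.} No path in $\partial G\setminus\sh(Z)$ joins a point of $\sh(x)$ to a point of $\sh(y)$. So $\sh(x)$ and $\sh(y)$, both nonempty, lie in different components of $\partial G\setminus\sh(Z)$, and in particular $\sh(Z)$ separates $\partial G$. One subtlety remains: components versus path components. $\partial G$ is locally connected (Bestvina--Mess, since $G$ is one-ended). If $\sh(Z)$ is closed, which holds because $Z$ is finite and shadows of points are closed by Arzel\`a--Ascoli on rays, then open subsets of $\partial G$ have path-connected components, so the argument above suffices.
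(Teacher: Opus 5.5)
\textbf{Overall.} Your skeleton matches the paper's proof. You pull a path in $\partial G\setminus\sh(Z)$ back to a $d_0$-chain in $S_n$ via Proposition~\ref{prop:basic_hyp_groups2}(i), note that the chain avoids $Z^{+t_0}$, and contradict the separation of $x$ and $y$. Your remark on components versus path components is more careful than the paper. However, two steps do not hold as written.

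\textbf{Disjointness of $\sh(x)$ and $\sh(y)$.} The claim ``since $d(x,y)$ is large'' does not follow from the hypotheses. Lying in different components of $S_n^{+t}\setminus Z$ at distance $\ge\theta$ from $Z$ only gives $d_{S_n^{+t}}(x,y)\ge 2\theta$ for the intrinsic path metric. A priori, $x$ and $y$ could be close in $G$ yet far apart inside $S_n^{+t}$. This is exactly where the paper uses one-endedness, through the exponential upper distortion bound of Proposition~\ref{prop:distortion_of_spheres} (which rests on Lemma~\ref{lem:disto_sphere_upperbound}). The inequality $2\theta\le d_{S_n^{+t}}(x,y)\le A_2e^{\mu_2 d(x,y)}$ forces $d(x,y)\ge 7t$ once $\theta$ is large.

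You could also get disjointness from your own scheme. You have already shown $\sh(x)\cap\sh(Z)=\emptyset$, so any $\xi\in\sh(x)\cap\sh(y)$ lies outside $\sh(Z)$. The constant path at $\xi$ then runs into the same contradiction: the two rays to $\xi$ are asymptotic, so their points on $S_n$ are $d_0$-close by Proposition~\ref{prop:basics_hyp_groups1}(i). Either way, as written the step is unsupported.

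\textbf{The end segments.} Your fix here also fails as stated. The geodesic $[x,x']$ can enter $B_{n-1}$ even when $d(e,x)\ge n$, and $\pi_n$ is undefined there. Proposition~\ref{prop:basic_hyp_groups2}(iii) needs both endpoints on $S_n$ and a path outside $B_{n-1}$, so it does not cover the case $x\in B_{n-1}$ either.

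What is needed is again one-endedness. Pick $x''\in S_n$ with $d(x,x'')\le t$, so that $[x,x'']\subset S_n^{+t}$. Then apply Lemma~\ref{lem:disto_sphere_upperbound} to $x'',x'\in S_n$, which are at distance at most $2t_0+2t$. This gives a path from $x$ to $x'$ inside $S_n^{+t}$ of length at most $c:=t+Ce^{\mu(2t_0+2t)}$. That path avoids $Z$ as soon as $\theta>c$, and the same works at $y$.

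You were right to flag this step. The paper only asserts that $d(x,x')\le 2t_0+t<\theta$ puts $x$ and $x'$ in the same component of $S_n^{+t}\setminus Z$, and the lemma above is what actually justifies it. With these two repairs, and $\theta$ chosen larger than the resulting constants, your argument goes through.
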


\begin{proof}
    Let $A_1,A_2,\mu_1,\mu_2>0$ be the constants as in Proposition~\ref{prop:distortion_of_spheres}. Let $\theta \geq 10t$ be large enough such that 
    \begin{equation}\label{eq:def_of_theta}
         2(A_1 e^{\mu_1 \theta} -2t) >  A_2 e^{10t \mu_2 }.
    \end{equation}
    We show that the statement holds for this $\theta$. If $d(x,Z) \geq \theta$, then $d(x,Z) \geq 10t$, and $\sh(x)$ and $\sh(Z)$ are disjoint by Lemma~\ref{Lem:empty_intersect_shadows}. The same holds for $\sh(y)$ and $\sh(Z)$. Moreover, $d(x,Z) \geq \theta$ implies that
    $$ d_{S_n^{+t}}(x,Z) \geq A_1 e^{\mu_1 d(x,Z)} -2t \geq A_1 e^{\mu_1 \theta} -2t. $$
    The same holds for $y$. Since $x$ and $y$ are in different connected components of $S_n^{+t} \setminus Z$, we obtain
    $$  d_{S_n^{+t}}(x,y) \geq d_{S_n^{+t}}(x,Z)+ d_{S_n^{+t}}(y,Z) \geq 2(A_1 e^{\mu_1 \theta} -2t).  $$
    Therefore, by Proposition~\ref{prop:distortion_of_spheres},
    $$d(x,y) \geq \frac{\log\left(\frac{d_{S_n^{+t}}(x,y)}{A_2}\right)}{\mu_2} \geq 7t,$$
    where the right inequality holds by \eqref{eq:def_of_theta}. Therefore, $\sh(x)$ and $\sh(y)$ are disjoint by Lemma~\ref{Lem:empty_intersect_shadows}. 
    
    Now suppose that there exists a path $\tilde{\gamma}$ in $\partial G$ from $\sh(x)$ to $\sh(y)$ which avoids $\sh(Z)$. Let $x'$ (resp.\ $y'$) be the intersection of $S_n$ with a geodesic ray from $e$ to $\sh(x)$ (resp.\ $\sh(y)$), and let $\gamma$ be the pull-back of $\tilde{\gamma}$ to $S_n$ given by Proposition~\ref{prop:basic_hyp_groups2}. Then $\gamma$ is a $d_0$-path in $S_n$ from $x'$ to $y'$ which avoids $Z^{+t_0}$, where $d_0$ is the constant from Proposition~\ref{prop:dist_boundary_sphere_comparison}. However, since $d(x,x') \le 2t_0+t < \theta$, the points $x$ and $x'$ lie in the same connected component of $S_n^{+t} \setminus Z$, and similarly $y$ and $y'$ lie in a same connected component. Hence $x'$ and $y'$ lie in two different connected components of $S_n^{+t} \setminus Z$. In particular, every path in $S_n^{+t}$ from $x'$ to $y'$ intersects $Z$ and every $d_0$-path from $x'$ to $y'$ must intersect $Z^{+d_0}$, and in particular must intersect $Z^{+t_0}$ since $t_0 \geq d_0$. This contradicts the fact that $\gamma$ avoids $Z^{+t_0}$.
\end{proof}

\section{Proof of the main theorem}

\noindent
We are now ready to prove \Cref{thm:cut_spheres_exp}.

\begin{proof}[Proof of \Cref{thm:cut_spheres_exp}]
    Let $t_0\geq 1$ be as in Proposition~\ref{prop:basic_hyp_groups2}, and let $t \geq t_0$ and $\varepsilon \in (0,1)$. Let $n \in \mathbb{N}$, and let $Z_n \subset S_n^{+t}$ be a $\varepsilon$-cut. Let $\Theta$ and $\alpha$ be the constants from Proposition~\ref{prop:big_spheres_farfrom_cuts}. 

    \medskip\noindent
    If the case $(1)$ from Proposition~\ref{prop:big_spheres_farfrom_cuts} does not hold, then
    \begin{equation}\label{eq:size_Z_case1}
        |Z_n| \geq \Theta e^{\frac{\alpha}{2}n}.
    \end{equation}
    Otherwise, let $x_n,y_n$ be in different connected components of $S_n^{+t} \setminus Z_n$ such that 
    $$d(\{x_n,y_n\},Z_n) > \frac{n}{2}.$$ 
    So, for $n$ large enough, by Proposition~\ref{prop:shadows_separated_by_shadows}, $\sh(x_n)$ and $\sh(y_n)$ are contained in two different connected components of $\partial G \setminus \sh(Z_n)$. 

    \medskip\noindent
    Let $\xi_n \in \sh(x_n)$, $\eta_n \in \sh(y_n)$, and $\zeta \in \sh(Z_n)$. So there exits $z \in Z_n$, and $z' \in B(z,t_0)$ such that $[e,z']$ extends to $[e,\zeta)$. Up taking the ball $ B(z,3t)$ instead of $ B(z,t_0)$, we can assume that $z' \in S_n$. Similarly, let $x_n'$ be in $B(x_n,3t) \cap S_n$ such that $[e,x_n']$ extends to $[e,\xi_n)$. Since $d(x_n,Z_n) > \frac{n}{2}$, 
    $$\frac{n}{2} \leq d(x_n,z) \leq d(x_n,x_n')+d(x_n',z')+d(z',z) \leq d(x_n',z') +6t. $$
    So $d(x_n',z') \geq \frac{n}{2} - 6t$, which is greater than $d_0$ (of Proposition~\ref{prop:dist_boundary_sphere_comparison}) for $n$ large enough. Therefore, by Proposition~\ref{prop:dist_boundary_sphere_comparison}, 
    $$\rho(\xi_n, \zeta) \geq C_1 \, a^{-n} \, a^{\frac{n}{2}-6t} = \frac{C_1}{a^{6t}} a^{-\frac{n}{2}}.$$
    The same holds for $\rho(\eta_n,\zeta)$. Hence
    $$ \rho(\{\xi_n,\eta_n\},\sh(Z_n)) \geq \frac{C_1}{a^{6t}} a^{-\frac{n}{2}}. $$
    Set $r_0 := \frac{C_1}{a^{6t}} a^{-\frac{n}{2}}$. By \cite[Claims 4.2 and 4.3]{lazarovich2024commensurated}, see \Cref{thm:LMM}, there exists a minimal closed subset, with respect to inclusion, $Y \subseteq \sh(Z_n)$ that separates $\xi_n,\eta_n$. In particular
    $$\rho(\{\xi_n,\eta_n\}, Y) \geq r_0.$$
    Moreover, there exists a constant $\lambda \in (0,1)$, that only depends on $(\partial G, \rho)$, such that for all $\zeta \in Y$ and $r\in (0,r_0)$, we have
    \begin{equation}\label{eq:intersection_separatingset_boundary}
        Y \cap \left(  \bar B_\rho(\zeta,r) \setminus B_\rho(\zeta,\lambda r)   \right) \ne \emptyset,
    \end{equation}
    where $ \bar B_\rho$ denotes the closed ball in $(\partial G,\rho)$, and $B_\rho$ denotes the open ball.

    \medskip\noindent
    Since $\lambda <1$, up to replacing it by $\lambda^N$ for some $N \in \mathbb{N}$, we may assume that $\lambda< \frac{1}{3}$. Choose some $\zeta_1 \in Y$, and fix $r = \frac{r_0}{2}$.
    \begin{claim}
        For every $p \in \mathbb{N}^*$, there exist $2^p$ points in $Y$ that are $\lambda^{2p}r$-separated.
    \end{claim}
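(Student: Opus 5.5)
We argue by induction on $p$, proving a slightly stronger statement that keeps the points inside a controlled ball so the density property \eqref{eq:intersection_separatingset_boundary} can be iterated at smaller scales. Concretely, we prove: for every $p \ge 1$ there exist points $\zeta^{(p)}_1,\dots,\zeta^{(p)}_{2^p} \in Y$ that are pairwise $\lambda^{2p} r$-separated. Since every radius used below is $\le r < r_0$, \eqref{eq:intersection_separatingset_boundary} applies at every point of $Y$ and every scale we need.

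For the base case $p=1$, apply \eqref{eq:intersection_separatingset_boundary} at $\zeta_1$ with radius $\lambda r$. This gives $\zeta_2 \in Y$ with $\lambda^2 r \le \rho(\zeta_1,\zeta_2) \le \lambda r$. Hence $\{\zeta_1,\zeta_2\}$ is $\lambda^{2}r$-separated.

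For the induction step, suppose $\{\zeta_i\}_{i=1}^{2^p}$ is $\lambda^{2p}r$-separated in $Y$. For each $i$, keep $\zeta_i$ and apply \eqref{eq:intersection_separatingset_boundary} at $\zeta_i$ with radius $s := \lambda^{2p+1} r$. This yields $\zeta_i' \in Y$ with
$$\lambda^{2p+2} r \le \rho(\zeta_i,\zeta_i') \le \lambda^{2p+1} r.$$
So each new pair $\zeta_i,\zeta_i'$ is $\lambda^{2(p+1)}r$-separated. For $i\neq j$, the points $\zeta_i,\zeta_j$ are already $\lambda^{2p}r \ge \lambda^{2p+2}r$ apart. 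For the mixed pairs, the triangle inequality gives
$$\rho(\zeta_i',\zeta_j) \ge \lambda^{2p}r - \lambda^{2p+1}r,$$
$$\rho(\zeta_i',\zeta_j') \ge \lambda^{2p}r - 2\lambda^{2p+1}r = \lambda^{2p}r(1-2\lambda).$$
Since $\lambda<\frac13$, we have $1-2\lambda > \frac13 > \lambda^2$, so these distances are also $\ge \lambda^{2p+2}r$. Thus the $2^{p+1}$ points $\{\zeta_i,\zeta_i'\}_i$ are $\lambda^{2(p+1)}r$-separated.

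The only delicate point is checking that the radii stay admissible. Each radius used is $\lambda^{k}r$ with $k\ge1$, hence lies in $(0,r_0)$ because $r = r_0/2$. The density property from \Cref{thm:LMM} holds at every point of $Y$, not only at $\zeta_1$, so no localisation of the new points is needed. The other condition is that the spacing between generations stays comparable to the previous separation. This is exactly why we step down by two powers of $\lambda$ per generation and why we arranged $\lambda<\frac13$. With these choices no step should present a real obstacle.
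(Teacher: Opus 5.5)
Your proof is correct and follows the paper's argument step for step: the same base case at radius $\lambda r$, the same induction applying the annulus property at each $\zeta_i$ with radius $\lambda^{2p+1}r$, and the same triangle-inequality estimate $\rho(\zeta_i',\zeta_j')\ge \lambda^{2p}r(1-2\lambda)>\lambda^{2p+2}r$. Your direct bound for $\rho(\zeta_i',\zeta_j)$ is a slightly cleaner version of the paper's disjoint-balls remark, and the ``slightly stronger statement'' about keeping points in a controlled ball is never used or needed, so you can drop that sentence.
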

    \begin{proof}[Proof of the claim]
        We prove it by induction on $p$. For $p=1$, by \eqref{eq:intersection_separatingset_boundary}, there exists 
    $$ \zeta_2 \in Y \cap \left(  \bar B_\rho(\zeta_1,\lambda r) \setminus B_\rho(\zeta_1,\lambda^2 r)   \right) .$$ 
    So $\{\zeta_1,\zeta_2\}$ are $\lambda^2 r$-separated. 
    
    Suppose that we found, $2^p$ points $ \{\zeta_i \}_{ 1 \leq i \leq 2^p}$ in $Y$ that are $\lambda^{2p}r$-separated.  Note that, since $\lambda<\frac{1}{3}$, for any $i \ne j$ in $\{1,\dots,2^p\}$, 
    $$B_\rho(\zeta_i,\lambda^{2p+1}r) \cap  B_\rho(\zeta_j,\lambda^{2p+1}r) = \emptyset $$ 
    because $2 \lambda^{2p+1}r < \lambda^{2p}r \leq \rho(\zeta_i,\zeta_j)$. By applying \eqref{eq:intersection_separatingset_boundary}, for every $i \in \{1,\dots,2^p\}$, there exists
    $$ \zeta_i' \in Y \cap \left(  \bar B_\rho(\zeta_i,\lambda^{2p+1} r) \setminus B_\rho(\zeta_i,\lambda^{2p+2} r)   \right) .$$ 
    For any $i \ne j$, $B_\rho(\zeta_i,\lambda^{2p+1}r) \cap  B_\rho(\zeta_j,\lambda^{2p+1}r) = \emptyset $, so $$\rho(\zeta_i',\zeta_j) \geq \lambda^{2p+1}r > \lambda^{2p+2}r.$$
    Moreover,
    \begin{align*}
        \lambda^{2p}r \leq \rho(\zeta_i,\zeta_j) &\leq \rho(\zeta_i,\zeta_i') + \rho(\zeta_i',\zeta_j') + \rho(\zeta_j',\zeta_j)
        \\& \leq \lambda^{2p+1}r + \rho(\zeta_i',\zeta_j') + \lambda^{2p+1}r.
    \end{align*}
    Therefore,
    $$\rho(\zeta_i',\zeta_j') \geq \lambda^{2p}r  - 2 \lambda^{2p+1}r = \lambda^{2p}r(1-2\lambda) > \lambda^{2p+2}r, $$
    where the right inequality holds because $\lambda < \frac{1}{3}$.  Therefore, $ \{\zeta_i, \zeta_i' \}_{ 1 \leq i \leq 2^p}$ in $Y$ are $\lambda^{2(p+1)}r$-separated.
    \end{proof} 
    By Lemma~\ref{lem:separated_family_boundary}, every $\lambda^{2p}r$-separated family in $Y \subset \sh(Z_n)$ yields a  $d_p$-separated family in $Z_n$, where 
    $$d_p := 2n + 2 \log_a \left(\frac{\lambda^{2p}r}{C_2}\right) - 4t_0-2t.$$
    Therefore, to get distinct points in $G$, one should have $d_p \geq 1$. Since $r = \frac{r_0}{2} = \frac{C_1}{2a^{6t}} a^{-\frac{n}{2}} $,
    we have 
    $$
    \begin{aligned}
d_p \geq 1 &\iff 2n + 2 \log_a \left(\frac{\lambda^{2p} \frac{C_1}{2a^{6t}} a^{-\frac{n}{2}} }{C_2}\right) - 4t_0-2t \geq 1 \\
 &\iff p \leq \tau \, n+ \Omega,
\end{aligned}
$$
where
$$ \tau :=\frac{1}{4 \log_a\left(\frac{1}{\lambda}\right)} \qquad \textup{ and } \qquad \Omega :=  \frac{2 \,\log_a \left(\frac{C_1}{2C_2}\right) - 14t - 4t_0 - 1}{4  \log_a \left(\frac{1}{\lambda}\right)} . $$
Note that $\tau$ and $\Omega$ only depend on $G$ and $t$, and that $\tau >0$ because $\lambda <1$. Therefore $p$ can be taken $ p = \lfloor \tau \, n+ \Omega \rfloor \geq \tau \, n+ \Omega -1$. So $Z_n$ contains a family of at least $2^{\Omega-1} 2^{\tau n}$ points that are $1$-separated. Therefore
$$|Z_n| \geq 2^{\Omega-1} 2^{\tau n}.$$
Combining this with \eqref{eq:size_Z_case1}, we conclude that
$$ |Z_n| \geq \min\{ 2^{\Omega-1} 2^{\tau n}, \Theta e^{\frac{\alpha}{2}n}  \} . \qedhere $$
\end{proof}

\begin{proof}[Proof of \Cref{thm:IntroHyp}]
    If $G$ splits over a two-ended subgroup $H$, then $G$ is coarsely separated by $H$. Moreover, since $H$ is virtually cyclic, and cyclic subgroups are undistorted in hyperbolic groups, then $H$ is undistorted. Therefore $G$ is coarsely separated by a subset of linear growth.
    
    Suppose that $G$ does not split over a two-ended subgroup, and let $t \geq 0$ be as in \Cref{thm:cut_spheres_exp}. Since the family $\{S_G(g,n)^{+t}\}_{g \in G,\, n \ge t}$ is persistent (see Example~\ref{exple:persistent}), it follows from \Cref{thm:persist_family_expcut_coarse_sep} that $G$ cannot be coarsely separated by any family of subsets with sub-exponential growth.
\end{proof}

\section{Applications to graph products}

This section is dedicated to the proof of \Cref{thm:GPsubsep}. We start by recalling basic definitions and properties related to graph products in Section~\ref{section:GP}. Section~\ref{section:GPHyp} is dedicated to the hyperbolicity of such groups. In particular, we determine precisely when a graph product is a virtual surface group. In Section~\ref{section:GPsplit}, we study splittings of graph products, and in particular we characterise splittings over virtually cyclic groups. Finally, we prove Theorem~\ref{thm:GPsubsep} in Section~\ref{section:GPproof}, where we also mention a couple of concrete examples.

\subsection{Graph products}\label{section:GP}

\noindent
Given a graph $\Gamma$ and a collection of groups $\mathcal{G}=\{G_u \mid u \in V(\Gamma) \}$ indexed by the vertices of $\Gamma$, the \emph{graph product} $\Gamma \mathcal{G}$ is the group defined by the relative presentation
$$\langle G_u \ (u \in V(\Gamma)) \mid [G_u,G_v]=1 \ (\{u,v\} \in E(\Gamma)) \rangle,$$
where $E(\Gamma)$ denotes the edge-set of $\Gamma$ and where $[G_u,G_v]=1$ is a shorthand for ``$[a,b]=1$ for all $a \in G_u$ and $b \in G_v$. We refer to the groups in $\mathcal{G}$ as the \emph{vertex-groups}.

\medskip \noindent
Usually, one says that graph products interpolate between free products (when $\Gamma$ has no edge, i.e.\ ``nothing commute'') and direct sums (when $\Gamma$ is a complete graph, i.e.\ ``everything commute''). Classical examples also include right-angled Artin groups (= graph products of infinite cyclic groups) and right-angled Coxeter groups (= graph products of cyclic groups of order $2$). 

\medskip \noindent
\textbf{Convention.} In the rest of the article, we will always assume that the vertex-groups of our graph products are non-trivial. 

\medskip \noindent
Given a graph $\Gamma$ and a collection of groups $\mathcal{G}$ indexed by $V(\Gamma)$, an element $g \in \Gamma \mathcal{G}$ can be written as a product $s_1 \cdots s_n$ where each $s_i$ belongs to a vertex-group. We refer to such a product as a \emph{word} and to each $s_i$ as a \emph{syllable}. Notice that, given such a word $r_1 \cdots r_m$, applying the following operations does not modify the element of $\Gamma \mathcal{G}$ it represents:
\begin{description}
    \item[(Cancellation)] if there exists some $1 \leq i \leq m$ such that $r_i=1$, remove the syllable $r_i$;
    \item[(Merging)] if there exists some $1 \leq i \leq m-1$ such that $r_i$ and $r_{i+1}$ belong to the same vertex-group, replace the subword $r_ir_{i+1}$ with the single syllable $(r_ir_{i+1})$;
    \item[(Shuffling)] if there exists some $1 \leq i \leq m-1$ such that $r_i$ and $r_{i+1}$ belong to adjacent vertex-groups, replace the subword $r_ir_{i+1}$ with the subword $r_{i+1}r_i$.
\end{description}
A word is \emph{graphically reduced} if it cannot be shortened by applying a sequence of such moves. 

\begin{prop}[\cite{GreenGP}]\label{prop:NormalForm}
Let $\Gamma$ be a graph and $\mathcal{G}$ a collection of groups indexed by $V(\Gamma)$. Every element of $\Gamma \mathcal{G}$ can be represented by a graphically reduced word. Moreover, any two such words only differ from each other by a sequence of shufflings.
\end{prop}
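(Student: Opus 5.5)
The statement is Green's normal form theorem. I plan to prove it with the standard action of $\Gamma\mathcal{G}$ on the set $W$ of graphically reduced words modulo shuffling. Existence is immediate, since the three moves never increase length. Starting from any word representing $g$, apply cancellations and mergings (possibly after shufflings) until no further shortening is possible. The result is graphically reduced by definition, and each move preserves the element represented.

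For uniqueness, I would first give a combinatorial characterisation. A word $s_1\cdots s_n$ with all $s_i\neq 1$ is graphically reduced if and only if there is no pair $i<j$ with $s_i,s_j$ in the same vertex-group $G_u$ such that every $s_k$ with $i<k<j$ lies in a vertex-group adjacent to $u$. The forward direction is clear, because such a pair could be shuffled together and merged. The reverse direction is the content of the theorem, and it follows from the action below.

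Let $W$ be the set of words satisfying the characterisation, taken modulo shuffling equivalence $\sim$. I define an action of each $G_u$ on $W$. Given $a\in G_u\setminus\{1\}$ and a class $[w]$, check whether $w$ can be shuffled to begin with a syllable of $G_u$; this means some syllable $s_i\in G_u$ has all earlier syllables adjacent to $u$. Such an $s_i$ is unique by the characterisation. If none exists, set $a\cdot[w]=[a\,w]$. If one exists, set $a\cdot[w]=[(a s_i)\,w']$, where $w'$ is $w$ with $s_i$ removed, and drop the syllable $a s_i$ if it equals $1$.

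The steps to verify are the following. First, this map is well defined on $\sim$-classes and lands in $W$. Second, it gives an action of $G_u$, that is, $a\cdot(b\cdot[w])=(ab)\cdot[w]$. Third, for adjacent $u,v$ the actions of $G_u$ and $G_v$ commute. The universal property of the presentation then yields an action of $\Gamma\mathcal{G}$ on $W$.

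Checking that the actions of adjacent vertex-groups commute is the main obstacle, and the rest is routine case analysis. For this step I would use that the leading $G_u$-syllable and the leading $G_v$-syllable occupy different positions of $w$. Since $u,v$ are adjacent, both can be shuffled simultaneously to the front, so the two operations touch disjoint syllables.

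Finally, let $w=s_1\cdots s_n$ be any word in $W$ representing $g$. Induction on $n$ shows that $g\cdot[\emptyset]=[w]$: acting by $s_n$, then $s_{n-1}$, and so on, prepends each syllable without merging, precisely because of the characterisation. Hence two such words representing the same $g$ are shuffle-equivalent. A graphically reduced word satisfies the characterisation by the easy direction, so this proves uniqueness, and as a consequence the characterisation is exact.
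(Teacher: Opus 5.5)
The paper does not prove this proposition. It quotes it from Green and uses it as a black box.

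Your argument is a correct, self-contained proof: it is the van der Waerden permutation trick adapted to partial commutation. The verifications you defer do go through.
\begin{itemize}
\item Both the condition defining $W$ and the property of being a leading $G_u$-syllable are invariant under a single shuffle. Two syllables of $G_u$ are never swapped with each other. The syllables lying before, or between, occurrences from $G_u$ change only when a syllable crosses one of those occurrences, and such a syllable lies in a vertex-group adjacent to $u$.
\item Hence the leading $G_u$-syllable, when it exists, is unique and is tracked through shuffles.
\item Removing it leaves a suffix of $s_iw'\in W$, which therefore lies in $W$.
\item In $a\cdot(b\cdot[w])=(ab)\cdot[w]$, the case where a syllable collapses to $1$ works because $w'$ then has no leading $G_u$-syllable.
\item For adjacent $u,v$, your ``disjoint syllables'' remark handles the case where both leading syllables exist. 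In the remaining cases, where only one or neither exists, the two actions still commute. This is because prepending or deleting a $G_v$-syllable does not change whether some $G_u$-syllable is leading.
\end{itemize}

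Compared with the bare citation, your route costs some case-checking but gives more than the statement. It works for arbitrary $\Gamma$ and arbitrary vertex-groups, and it shows that the vertex-groups embed. It also proves the exact criterion: a word is graphically reduced if and only if its syllables are nontrivial and no two syllables of the same $G_u$ are separated only by syllables from vertex-groups in $\mathrm{link}(u)$.

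That criterion is what the paper tacitly needs in Proposition~\ref{prop:BiLipEmbed}. There, the words obtained by applying $\iota$ syllable-by-syllable must be graphically reduced in $\Gamma\mathcal{H}$. Since the criterion only depends on which vertex-groups the syllables lie in, and each $\iota_u$ is injective, this follows immediately from your proof. It does not follow from the statement as cited.
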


\noindent
In the following, we will need some vocabulary related to graphs, which we introduce now. 
\begin{itemize}
    \item A \emph{join} $A \ast B$ of two graphs $A$ and $B$ is the graph obtained from the disjoint union $A \sqcup B$ by connecting with an edge every vertex of $A$ to every vertex of $B$. 
    \item Given a graph $\Gamma$ and a vertex $u \in V(\Gamma)$, the \emph{link} of $u$ in $\Gamma$, denoted $\mathrm{link}(u)$, is the subgraph of $\Gamma$ induced by the neighbours of $u$. 
    \item Given a graph $\Gamma$ and a vertex $u \in V(\Gamma)$, the \emph{star} of $u$ in $\Gamma$, denoted $\mathrm{star}(u)$, is the subgraph of $\Gamma$ induced by $\mathrm{link}(u) \cup \{u\}$.
\end{itemize}
Finally, given a graph $\Gamma$, a collection of groups $\mathcal{G}$ indexed by $V(\Gamma)$, and a subgraph $\Lambda \leq \Gamma$, we denote by $\langle \Lambda \rangle$ the subgroup of $\Gamma \mathcal{G}$ generated by the vertex-groups indexed by the vertices of $\Lambda$. Notice that, if $\Lambda$ decomposes as a join $\Phi \ast \Psi$, then $\langle \Lambda \rangle = \langle \Phi \rangle \oplus \langle \Psi \rangle$.

\subsection{Hyperbolicity}\label{section:GPHyp}

\noindent
In this section, we focus on the hyperbolicity of graph products. First of all, we record the following characterisation:

\begin{thm}[\cite{GPHyp}]\label{thm:GPhyp}
Let $\Gamma$ be a finite graph and $\mathcal{G}$ a collection of finitely generated groups indexed by $V(\Gamma)$. The graph product $\Gamma \mathcal{G}$ is hyperbolic if and only if the following conditions hold:
\begin{itemize}
    \item every group of $\mathcal{G}$ is hyperbolic;
    \item no two infinite vertex-groups are adjacent in $\Gamma$;
    \item two vertex-groups adjacent to a common infinite vertex-group must be adjacent;
    \item the graph $\Gamma$ is $\square$-free.
\end{itemize}
\end{thm}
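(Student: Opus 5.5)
This characterisation is quoted from \cite{GPHyp}, so within the paper it is invoked rather than reproved. If I had to reprove it, I would use Meier's criterion, in the following order.

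\emph{Necessity.} Every $G_u$ is a subgroup of $\Gamma\mathcal{G}$, and it is a retract via the projection killing the other vertex-groups. A retract of a hyperbolic group is quasi-isometrically embedded, hence hyperbolic, which gives the first condition. For the other three conditions, I would look for a subgroup that is clearly non-hyperbolic and show it sits in $\Gamma\mathcal{G}$ as a retract $\langle\Lambda\rangle$ for an induced subgraph $\Lambda$.
\begin{itemize}
\item If two infinite vertex-groups $G_u,G_v$ are adjacent, then $\langle\{u,v\}\rangle=G_u\oplus G_v$ is a direct product of two infinite groups. It is therefore not hyperbolic, since it cannot contain a free subgroup commuting with an infinite-order element in the required way.
\item If $\Gamma$ contains an induced square $u,v,w,x$, then $\langle u,w\rangle\oplus\langle v,x\rangle=(G_u\ast G_w)\oplus(G_v\ast G_x)$. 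Each factor is an infinite free product, so this contains $\mathbb{Z}^2$.
\item If $G_w$ is infinite and $u,v$ are non-adjacent neighbours of $w$, then $\langle\{u,v,w\}\rangle=(G_u\ast G_v)\oplus G_w$. This contains $\mathbb{Z}\oplus\mathbb{Z}$, or at least an infinite group times an infinite group, again contradicting hyperbolicity.
\end{itemize}
Induced subgraphs give retracts, so in each case non-hyperbolicity passes up to $\Gamma\mathcal{G}$.

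\emph{Sufficiency.} I would induct on $|V(\Gamma)|$. If $\Gamma$ is complete, the conditions force at most one infinite vertex-group. So $\Gamma\mathcal{G}$ is finite-by-hyperbolic, a finite group times a hyperbolic group, hence hyperbolic. If $\Gamma$ is not complete, pick a vertex $u$ with $\mathrm{star}(u)\neq\Gamma$. Then
$$\Gamma\mathcal{G}=\langle\Gamma\setminus u\rangle\ast_{\langle \mathrm{link}(u)\rangle}\langle \mathrm{star}(u)\rangle.$$
The main obstacle is applying a combination theorem (Bestvina--Feighn) to this amalgam: it requires quasiconvexity of $\langle\mathrm{link}(u)\rangle$ together with a malnormality or annuli-flaring condition. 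The square-free hypothesis and the infinite-vertex condition should be exactly what guarantees flaring, by preventing long commuting strips. This is why I would rather cite Meier's original argument than redo the combination-theorem verification.
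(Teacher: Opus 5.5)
This matches the paper, which gives no proof of Theorem~\ref{thm:GPhyp} and simply imports it from \cite{GPHyp}. Your necessity argument is sound. Your sufficiency paragraph, however, is only an outline: its actual content, the annuli-flare or almost-malnormality check for $\langle \mathrm{link}(u)\rangle$ inside $\langle \Gamma\setminus u\rangle$, is left to Meier, so for that direction it is the citation, not your sketch, that does the work, exactly as in the paper.

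One repair in the first bullet: the reason you give there (about a free subgroup commuting with an infinite-order element) is not an argument. Use instead that $G_u$ and $G_v$ are infinite hyperbolic groups, by your retract argument, hence each contains an element of infinite order. So $G_u\oplus G_v$ contains $\mathbb{Z}^2$, which no hyperbolic group does.
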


\noindent
Now, we would like to identify when a given graph product belong to some specific families of hyperbolic groups.

\paragraph{When virtually cyclic.} We first consider the family of virtually cyclic groups. 

\begin{prop}\label{prop:GPvirtCyclic}
Let $\Gamma$ be a finite graph and $\mathcal{G}$ a collection of groups indexed by $V(\Gamma)$. The graph product $\Gamma \mathcal{G}$ is virtually cyclic if and only if 
\begin{itemize}
	\item either $\Gamma$ is a complete graph all of whose vertices are labelled by finite groups;
	\item or $\Gamma$ is a complete graph with one vertex labelled by a virtually infinite cyclic group and all the other vertices labelled by finite groups;
	\item or $\Gamma$ is a join between a complete graph all of whose vertices are labelled by finite groups and two non-adjacent vertices labelled by $\mathbb{Z}_2$.
\end{itemize}
\end{prop}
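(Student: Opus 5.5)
The converse direction is a direct check, and the forward direction goes by locating a non-virtually-cyclic subgroup whenever $\Gamma$ is not of one of the three listed forms.

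\textbf{Converse.} In the first case $\Gamma\mathcal{G}$ is a finite direct sum of finite groups, hence finite. In the second case it is a direct sum of a virtually cyclic group with a finite group. In the third case it is $F \oplus (\mathbb{Z}_2 \ast \mathbb{Z}_2)$ with $F$ finite, and $\mathbb{Z}_2 \ast \mathbb{Z}_2$ is the infinite dihedral group. All three are virtually cyclic.

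\textbf{Forward direction.} Assume $\Gamma\mathcal{G}$ is virtually cyclic. The basic tool is that, for every induced subgraph $\Lambda \leq \Gamma$, the subgroup $\langle \Lambda\rangle$ is naturally isomorphic to the graph product $\Lambda\mathcal{G}_{|\Lambda}$. Moreover it is a retract of $\Gamma\mathcal{G}$: kill all vertex-groups outside $\Lambda$, which is well-defined on the presentation. Hence every $\langle\Lambda\rangle$ is again virtually cyclic, and in particular cannot contain a non-abelian free subgroup or $\mathbb{Z}^2$.

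\textbf{Step 1: at most one infinite vertex-group.} Suppose $u,v$ are distinct vertices with $G_u$ infinite.
\begin{itemize}
\item If $G_v$ is also infinite and $u,v$ are adjacent, then $G_u\oplus G_v$ is not virtually cyclic: a virtually cyclic group has no infinite normal subgroup of infinite index.
\item If $G_v$ is infinite and $u,v$ are non-adjacent, then $G_u\ast G_v$ contains a non-abelian free subgroup.
\item If $G_v$ is finite and $u,v$ are non-adjacent, then $G_u \ast G_v$ is again not virtually cyclic. Indeed, a free product $A\ast B$ is virtually cyclic only when $A\cong B\cong\mathbb{Z}_2$.
\end{itemize}
So if some vertex-group is infinite, it is virtually cyclic (as a retract), it is unique, and its vertex is adjacent to all others.

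\textbf{Step 2: structure of the non-edges.} Any two non-adjacent vertices $u,v$ must both be labelled $\mathbb{Z}_2$, since otherwise $G_u\ast G_v$ is not virtually cyclic. Next, the complement of $\Gamma$ contains at most one edge.
\begin{itemize}
\item Two disjoint non-edges $\{u,v\}$ and $\{u',v'\}$ would give $D_\infty$ pieces whose product $D_\infty \times D_\infty$ contains $\mathbb{Z}^2$; if some of $u,v,u',v'$ are adjacent across, the subgroup generated is still an amalgam containing $\mathbb{Z}^2$ or $F_2$.
\item Two non-edges sharing a vertex, say $\{u,v\}$ and $\{u,w\}$, make $\langle u,v,w\rangle$ either $\mathbb{Z}_2\ast\mathbb{Z}_2\ast\mathbb{Z}_2$ or $(\mathbb{Z}_2\times\mathbb{Z}_2)\ast\mathbb{Z}_2$, and both contain $F_2$.
\end{itemize}
Checking that in every such configuration the group $\langle\Lambda\rangle$ on at most four vertices is not virtually cyclic is the main (routine but case-heavy) obstacle. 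I would handle it by a small finite case analysis, using the retraction to reduce to these tiny subgraphs.

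\textbf{Step 3: combining.} If the complement of $\Gamma$ has no edge, then $\Gamma$ is complete, and by Step 1 we are in the first or second case. If the complement has exactly one edge $\{u,v\}$, then $\Gamma=A\ast\{u,v\}$ with $A$ complete and $u,v$ labelled $\mathbb{Z}_2$. Then $\Gamma\mathcal{G}=\langle A\rangle\oplus D_\infty$. This can only be virtually cyclic if $\langle A\rangle$ is finite, since otherwise it contains $\mathbb{Z}\oplus\mathbb{Z}$. So all vertices of $A$ are labelled by finite groups, which is the third case.
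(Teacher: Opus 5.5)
Your proof is correct, but it takes a different route from the paper's.

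\textbf{The paper's route.} The paper decomposes $\Gamma$ globally as a join $\Gamma_0 \ast \Gamma_1 \ast \cdots \ast \Gamma_n$, with $\Gamma_0$ complete and each $\Gamma_i$ ($i \geq 1$) a non-join on at least two vertices. It rules out $n \geq 2$, and an infinite vertex-group in $\Gamma_0$ when $n=1$, because each would produce a product of two infinite groups. In the remaining case $n=1$, it picks non-adjacent $u,v \in \Gamma_1$ and forces both labels to be $\mathbb{Z}_2$. It then argues that the infinite dihedral group $\langle u,v\rangle$ must have finite index in $\langle \Gamma_1 \rangle$. Lemma~\ref{lem:GPwhenFI}, together with the fact that $\Gamma_1$ is not a join, then gives $\Gamma_1=\{u,v\}$.

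\textbf{Your route.} You work locally on the complement graph and show it has at most one edge. You do this by exhibiting non-virtually-cyclic subgroups $\langle \Lambda \rangle$ on induced subgraphs with at most four vertices. This uses only the retraction onto $\langle \Lambda \rangle$ and elementary facts about free products, $F_2$ and $\mathbb{Z}^2$.

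\textbf{Comparison.} Your argument is more self-contained. It bypasses Lemma~\ref{lem:GPwhenFI}, whose proof needs a normal-form argument. It also makes explicit the retraction fact that the paper uses tacitly when writing $\langle u,v\rangle \simeq \langle u\rangle \ast \langle v\rangle$. And it presents the result as a forbidden-induced-subgraph criterion. The paper's route buys uniformity: the same join decomposition and Lemma~\ref{lem:GPwhenFI} are reused almost verbatim in the proof of Proposition~\ref{prop:NotVirtSurface}.

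\textbf{A simplification.} The case analysis you flag as the main obstacle in Step~2 collapses. Given two disjoint non-edges $\{u,v\}$ and $\{u',v'\}$, there are only two possibilities:
\begin{itemize}
\item all four cross pairs are edges, so $\langle u,v,u',v'\rangle \cong D_\infty \times D_\infty$, which contains $\mathbb{Z}^2$;
\item some cross pair is a non-edge, which gives two non-edges sharing a vertex, a case you have already handled.
\end{itemize}
Your sentence about vertices being ``adjacent across'' has this backwards: the alternative to the $D_\infty \times D_\infty$ configuration is that some cross pair is \emph{not} adjacent.
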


\noindent
Our proof of the proposition will be based on the following observation:

\begin{lemma}\label{lem:GPwhenFI}
Let $\Gamma$ be a graph, $\mathcal{G}$ a collection a group indexed by $V(\Gamma)$, and $\Lambda \leq \Gamma$ a subgraph. The subgroup $\langle \Lambda \rangle$ has finite index in $\Gamma \mathcal{G}$ if and only if $\Gamma= \mathrm{star}(\Lambda)$ and $\mathrm{link}(\Lambda)$ is a finite complete graph all of whose vertices are labelled by finite groups.
\end{lemma}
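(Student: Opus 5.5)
We prove the two implications separately, using the normal form of Proposition~\ref{prop:NormalForm} throughout. Write $\mathrm{star}(\Lambda)$ for the subgraph induced by $\Lambda$ together with all vertices adjacent to every vertex of $\Lambda$, and $\mathrm{link}(\Lambda)$ for the subgraph induced by the vertices outside $\Lambda$ adjacent to every vertex of $\Lambda$.

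\emph{Sufficiency.} Assume $\Gamma = \mathrm{star}(\Lambda)$ and that $\mathrm{link}(\Lambda)$ is a finite complete graph labelled by finite groups. Then $\Gamma = \Lambda \ast \mathrm{link}(\Lambda)$ as a graph. Indeed, every vertex is either in $\Lambda$ or adjacent to all of $\Lambda$, and the edges inside each part are induced. By the remark at the end of Section~\ref{section:GP}, we get
$$\Gamma\mathcal{G} = \langle \Lambda \rangle \oplus \langle \mathrm{link}(\Lambda)\rangle.$$
Since $\mathrm{link}(\Lambda)$ is complete, $\langle \mathrm{link}(\Lambda)\rangle$ is the direct sum of finitely many finite vertex-groups. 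It is therefore finite, so $\langle\Lambda\rangle$ has finite index.

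\emph{Necessity.} Assume $\langle \Lambda\rangle$ has finite index. We argue by contrapositive in two steps.

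\emph{Step 1: $\Gamma = \mathrm{star}(\Lambda)$.} Suppose some vertex $u \notin \Lambda$ fails to be adjacent to some $v \in \Lambda$. Pick nontrivial $a \in G_u$ and $b \in G_v$. We claim the cosets $(ab)^k\langle\Lambda\rangle$, $k \geq 0$, are pairwise distinct, which contradicts finite index. It suffices to show that $(ab)^k \notin \langle\Lambda\rangle$ for $k \ge 1$.
\begin{itemize}
\item The word $(ab)^k$ is graphically reduced: consecutive syllables lie in distinct, non-adjacent vertex-groups, so no shuffling or merging is ever possible.
\item By Proposition~\ref{prop:NormalForm}, every graphically reduced word representing $(ab)^k$ contains the syllable $a$ from $G_u$.
\item Elements of $\langle\Lambda\rangle$ admit reduced words using only syllables from $\Lambda$, and these are again graphically reduced in $\Gamma\mathcal{G}$.
\end{itemize}
Since $u \notin \Lambda$, it follows that $(ab)^k \notin \langle\Lambda\rangle$. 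This step is the delicate point: one must justify using normal forms that membership in $\langle\Lambda\rangle$ is detected by the syllables appearing in a reduced word.

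\emph{Step 2: the link is complete with finite labels.} Now $\Gamma = \Lambda \ast \mathrm{link}(\Lambda)$, so
$$\Gamma\mathcal{G} = \langle \Lambda \rangle \oplus \langle \mathrm{link}(\Lambda)\rangle.$$
Finite index of $\langle\Lambda\rangle$ therefore forces $\langle\mathrm{link}(\Lambda)\rangle$ to be finite. A graph product whose vertex-groups are non-trivial is finite only when the graph is finite, complete, and labelled by finite groups. To see this, suppose $x,y$ are non-adjacent vertices of $\mathrm{link}(\Lambda)$. Then $\langle x,y\rangle = G_x \ast G_y$ is an infinite free product, since both factors are non-trivial. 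It embeds in $\langle\mathrm{link}(\Lambda)\rangle$, either by the same normal-form argument as in Step 1 or because it is a retract. So $\mathrm{link}(\Lambda)$ is complete. Its vertex-groups are subgroups of the finite group $\langle\mathrm{link}(\Lambda)\rangle$, hence finite, and they are pairwise distinct direct factors, so the graph itself must be finite.
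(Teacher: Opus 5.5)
Your proof is correct and takes essentially the same approach as the paper. For sufficiency you use the splitting $\Gamma\mathcal{G}=\langle\Lambda\rangle\oplus\langle\mathrm{link}(\Lambda)\rangle$; for necessity you show $(ab)^n\notin\langle\Lambda\rangle$ for a non-adjacent pair $u\notin\mathrm{star}(\Lambda)$, $v\in\Lambda$, and then reuse the same direct-sum decomposition. The only difference is that you spell out two points the paper treats as evident: the normal-form argument for $(ab)^n\notin\langle\Lambda\rangle$, and why a finite graph product with non-trivial vertex-groups must come from a finite complete graph with finite labels.
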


\begin{proof}
If $\Gamma= \mathrm{star}(\Lambda)$ and $\mathrm{link}(\Lambda)$ is a finite complete graph all of whose vertices are labelled by finite groups, then $\Gamma \mathcal{G}= \langle \Lambda \rangle \oplus \langle \mathrm{link}(\Lambda) \rangle$ where $\langle \mathrm{link}(\Lambda) \rangle$ is finite (as a product of finitely many finite groups). Consequently, $\langle \Lambda \rangle$ has finite index in $\Gamma \mathcal{G}$. 

\medskip \noindent
Conversely, assume that $\langle \Lambda \rangle$ has finite index in $\Gamma \mathcal{G}$. If $\Gamma \neq \mathrm{star}(\Lambda)$, then we can find two vertices $u \notin V(\mathrm{link}(\Lambda) )$ and $v \in V(\Lambda)$ that are not adjacent. Fix two non-trivial elements $a \in \langle u\rangle$ and $b \in \langle v \rangle$. By noticing that $(ab)^n$ does not belong to $\langle \Lambda \rangle$ for every $n \geq 1$, we deduce that $\langle \Lambda \rangle$ cannot have finite index in $\Gamma \mathcal{G}$. Consequently, we must have $\Gamma= \mathrm{star}(\Lambda)$. This implies that $\Gamma \mathcal{G}= \langle \Lambda \rangle \oplus \langle \mathrm{link}(\Lambda) \rangle$. Clearly, $\langle \Lambda \rangle$ has finite index in $\Gamma \mathcal{G}$ if and only if $\langle \mathrm{link}(\Lambda) \rangle$ is finite, which holds if and only if $\mathrm{link}(\Lambda)$ is a finite complete graph all of whose vertices are labelled by finite groups. 
\end{proof}

\begin{proof}[Proof of Proposition~\ref{prop:GPvirtCyclic}.]
Decompose $\Gamma$ as a join $\Gamma_0 \ast \Gamma_1 \ast \cdots \ast \Gamma_n$ where $\Gamma_0$ is a complete graph (possibly empty) and where $\Gamma_1, \ldots, \Gamma_n$ are not joins and each contains at least two vertices. Notice that
$$\Gamma \mathcal{G}= \langle \Gamma_0 \rangle \oplus \langle \Gamma_1 \rangle \oplus \cdots \oplus \langle \Gamma_n \rangle;$$
and that, for every $1 \leq i \leq n$, $\langle \Gamma_i \rangle$ is infinite since $\Gamma_i$ is not complete. Thus, if $n \geq 2$, then $\Gamma \mathcal{G}$ cannot be virtually cyclic since it would contain a product of two infinite groups. Similarly, if $n =1$ and $\Gamma_0$ has a vertex labelled by an infinite group, then $\Gamma \mathcal{G}$ cannot be virtually cyclic. So only two cases remain to be considered. 

\medskip \noindent
The first case is $n=0$. In this case, $\Gamma$ is a complete graph, which amounts to saying that $\Gamma \mathcal{G}$ is the direct sum of its vertex-groups. Thus, $\Gamma \mathcal{G}$ is virtually cyclic if and only if either vertex-groups are all finite (i.e.\ $\Gamma$ is a complete graph all of whose vertices are labelled by finite groups) or there is exactly one vertex-group that is virtually infinite cyclic while all the others are finite (i.e.\ $\Gamma$ is a complete graph with one vertex labelled by a virtually infinite cyclic group and all the other vertices labelled by finite groups). 

\medskip \noindent
The second case is $n=1$ and $\Gamma_0$ is a complete graph all of whose vertices are labelled by finite groups. Then, $\Gamma \mathcal{G}= \langle \Gamma_0 \rangle \oplus \langle \Gamma_1 \rangle$ with $\langle \Gamma_0 \rangle$ finite, so $\Gamma \mathcal{G}$ is virtually cyclic if and only if $\langle \Gamma_1 \rangle$ is virtually cyclic. Because $\Gamma_1$ is not complete and contains at least two vertices, it must contain two non-adjacent vertices $u,v \in V(\Gamma_1)$. Since $\langle u ,v \rangle \simeq \langle u \rangle \ast \langle v \rangle$ is necessarily virtually cyclic, the groups labelling $u$ and $v$ must be both $\mathbb{Z}_2$. Then, $\langle u, v \rangle$ is an infinite dihedral group; and, since it must have finite index in $\langle \Gamma_1 \rangle$, we deduce from Lemma~\ref{lem:GPwhenFI} and from the fact that $\Gamma_1$ is not a join, that $\Gamma_1$ is reduced to the pair of non-adjacent vertices $\{u,v\}$. Thus, we have proved that $\Gamma$ is a join between a finite complete graph all of whose vertices are labelled by finite groups (namely, $\Gamma_0$) and two non-adjacent vertices labelled by $\mathbb{Z}_2$ (namely, $u$ and $v$). 
\end{proof}

\paragraph{When a surface group.} Then, we would like to determine when a graph product is virtually a surface group. Here, we refer to a surface group as the fundamental group of a closed surface of genus $\geq 2$. 

\begin{prop}\label{prop:NotVirtSurface}
Let $\Gamma$ be a finite graph and $\mathcal{G}$ a collection of groups indexed by $V(\Gamma)$. The graph product $\Gamma \mathcal{G}$ is virtually a surface group if and only if 
\begin{itemize}
	\item either $\Gamma$ is a complete graph with one vertex labelled by a virtual surface group and all the other vertices by finite groups; 
	\item or $\Gamma$ decomposes as a join between a complete graph all of whose vertices are labelled by finite groups and a cycle of length $\geq 5$ all of whose vertices are labelled by $\mathbb{Z}_2$.
\end{itemize}
\end{prop}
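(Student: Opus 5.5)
The plan mirrors the proof of Proposition~\ref{prop:GPvirtCyclic}. Decompose $\Gamma = \Gamma_0 \ast \Gamma_1 \ast \cdots \ast \Gamma_n$, where $\Gamma_0$ is complete (possibly empty) and each $\Gamma_i$ ($i\ge 1$) is not a join and has at least two vertices. Then
$$\Gamma\mathcal{G} = \langle \Gamma_0\rangle \oplus \langle\Gamma_1\rangle \oplus \cdots \oplus \langle \Gamma_n\rangle,$$
and each $\langle \Gamma_i\rangle$ with $i \ge 1$ is infinite. The converse direction is routine. If $\Gamma$ is complete with one vertex-group a virtual surface group and the rest finite, the group is a finite extension of a virtual surface group. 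If $\Gamma = \Gamma_0 \ast C_m$ with $\Gamma_0$ finite-labelled and $m\ge 5$, then $\langle C_m\rangle$ is the right-angled Coxeter group of an $m$-cycle. This is the reflection group of a right-angled hyperbolic $m$-gon, hence virtually a surface group, and $\langle\Gamma_0\rangle$ is finite.

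For the forward direction, assume $\Gamma\mathcal{G}$ is virtually a surface group. A surface group contains no $\mathbb{Z}^2$ and is not virtually a product of two infinite groups. Indeed, in a hyperbolic group an infinite normal subgroup of infinite index cannot have infinite centraliser-like complement. More concretely, a direct product of two infinite groups is not hyperbolic unless one factor is finite, and a non-elementary hyperbolic group is not commensurable to such a product. So $n \le 1$. If $n=0$, $\Gamma$ is complete and $\Gamma\mathcal{G}$ is a direct sum of vertex-groups. Exactly one vertex-group must be infinite (two infinite factors give a product of infinite groups), and that one is then commensurable to the whole group, hence a virtual surface group. If $n=1$, the same argument forces $\langle\Gamma_0\rangle$ to be finite, so $\Gamma_0$ is complete with finite labels and $\langle\Gamma_1\rangle$ is virtually a surface group.

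The main step is to show that $\Gamma_1$ (not a join, at least two vertices, $\langle\Gamma_1\rangle$ virtually a closed surface group) is a cycle of length $\ge 5$ labelled by $\mathbb{Z}_2$. Here I would use two facts about virtual surface groups. They are one-ended, so $\Gamma_1$ is connected with no separating complete subgraph of finite-labelled vertices, since otherwise $\langle\Gamma_1\rangle$ splits over a finite group. They also have no splitting over a finite subgroup and have boundary $S^1$. First, any infinite vertex-group $G_u$ forces $\langle\Gamma_1\rangle$ to retract onto $G_u$ via a map killing other vertices. If $u$ has a non-adjacent vertex $v$, then $\langle u,v\rangle = G_u \ast G_v$ is a free product that must embed in a surface group with infinite index. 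Combined with the one-ended structure, I would argue via Theorem~\ref{thm:GPhyp}: the link of $u$ is complete with finite labels and $\Gamma_1 \neq \mathrm{star}(u)$. Then $\mathrm{link}(u)$ separates $u$ from the rest, giving a splitting over a finite group, which is a contradiction. So all labels are finite. Next, each vertex-group must be $\mathbb{Z}_2$. For a vertex $u$ with $|G_u|\ge 3$ and a non-neighbour $v$, the graph of groups over $\mathrm{star}(u)$ gives an amalgam along $\langle \mathrm{link}(u)\rangle$ of index at least $3$ in $\langle \mathrm{star}(u)\rangle$. For a finite vertex-group, the Bass--Serre tree then has branching at least $3$ at edges stabilised by a finite group, which should produce either infinitely many ends or a local cut point configuration impossible in $S^1$. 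I expect this step, and the following one, to be the main obstacle. Finally, for a right-angled Coxeter group to be a virtual surface group, the nerve (the flag completion of $\Gamma_1$) must be a triangulated circle. This follows from Davis's theory: virtual Poincaré duality groups of dimension $2$ have nerve a generalised homology $1$-sphere. Since $\Gamma$ is $\square$-free by Theorem~\ref{thm:GPhyp} and the nerve is a circle without triangles (no $\mathbb{Z}_2^3$ would otherwise change the nerve's dimension), $\Gamma_1$ is a cycle. Its length is $\ge 5$ because $\square$-freeness excludes $4$, and a $3$-cycle gives a finite group.
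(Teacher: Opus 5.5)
Your reduction to $n\le 1$, the cases $n=0$ and $n=1$, and your proof that every vertex-group of $\Gamma_1$ is finite are all sound. That last argument is even quicker than the paper's detour through chordal graphs: an infinite $G_u$ has complete, finite-labelled link by Theorem~\ref{thm:GPhyp}, and since $\Gamma_1$ is not a join, $\mathrm{link}(u)$ separates $u$ from the rest of $\Gamma_1$, giving a splitting over a finite group. Your final appeal to Davis's characterisation of virtual Poincar\'e duality Coxeter groups is also acceptable, since $\Gamma_1$ is not a join and so no spherical factor appears.

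The genuine gap is the step in between, ruling out vertex-groups of order $\geq 3$. This is the actual heart of the proposition, and the Davis step depends on it, because without it you do not have a right-angled Coxeter group. Your amalgam $\langle \mathrm{star}(u)\rangle \ast_{\langle \mathrm{link}(u)\rangle} \langle \Gamma_1\setminus\{u\}\rangle$ does not have finite edge groups. If $\mathrm{link}(u)$ were complete you would already be done by one-endedness. So in the only relevant case $\mathrm{link}(u)$ contains two non-adjacent vertices $v,w$, and the edge group contains the infinite group $G_v \ast G_w$. Valence $|G_u|\geq 3$ at a vertex of a Bass--Serre tree with infinite edge stabilisers tells you nothing about ends. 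The ``local cut point configuration impossible in $S^1$'' is asserted rather than derived; making it precise would require relating the branches of the tree at that vertex to complementary components of a limit set in $S^1$, which you do not do.

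The paper avoids any local analysis at $u$ and uses a global argument instead:
\begin{itemize}
\item It first finds an induced cycle $\Xi$ of length $\geq 5$ in $\Gamma_1$. The chordal case is excluded because a separating complete subgraph contradicts one-endedness, and $\square$-freeness excludes $4$-cycles.
\item If all vertices of $\Xi$ are labelled by $\mathbb{Z}_2$, then $\langle\Xi\rangle$ is a virtual surface subgroup. It must have finite index, because infinite-index subgroups of surface groups are free, and Lemma~\ref{lem:GPwhenFI} then forces $\Gamma_1=\Xi$.
\item If some vertex of $\Xi$ carries a group of order $\geq 3$, Proposition~\ref{prop:BiLipEmbed} gives a quasi-isometric embedding into $\Gamma\mathcal{G}$ of the graph product $\Xi\mathcal{H}$, with one label $\mathbb{Z}_3$ and all others $\mathbb{Z}_2$. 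Since $\Xi\mathcal{H}$ is one-ended hyperbolic but not virtually a surface group, this cannot happen in a group quasi-isometric to $\mathbb{H}^2$: its boundary would be a cut-point-free continuum inside $S^1$, hence all of $S^1$.
\end{itemize}
Some global argument of this kind is what your proposal is missing.
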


\noindent
Before turning to the proof of our characterisation, we observe that:

\begin{prop}\label{prop:BiLipEmbed}
Let $\Gamma$ be a finite graph and $\mathcal{G},\mathcal{H}$ two collections of finitely generated groups indexed by $V(\Gamma)$. If there exists a biLipschitz embedding $\iota_u : G_u \hookrightarrow H_u$ for every $u \in V(\Gamma)$, then there exists a biLipschitz embedding $\Gamma \mathcal{G} \hookrightarrow \Gamma \mathcal{H}$. 
\end{prop}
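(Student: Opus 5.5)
The natural map to use is the one induced on graphically reduced words. Fix finite generating sets $S_u \subseteq G_u$ and $T_u \subseteq H_u$. Generate $\Gamma\mathcal{G}$ by $\bigcup_u S_u$ and $\Gamma\mathcal{H}$ by $\bigcup_u T_u$. Let $L \geq 1$ be a common biLipschitz constant for all the $\iota_u$; this is possible since $\Gamma$ is finite.

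Given $g \in \Gamma\mathcal{G}$, choose a graphically reduced word $g = s_1 \cdots s_n$ with $s_i \in G_{u_i}$, and set
$$\Phi(g) := \iota_{u_1}(s_1)\cdots \iota_{u_n}(s_n).$$
By Proposition~\ref{prop:NormalForm}, any two graphically reduced words for $g$ differ by shufflings. Shufflings commute syllables from adjacent vertex-groups, and their images lie in adjacent vertex-groups of $\Gamma\mathcal{H}$, so they also commute there. Hence $\Phi$ is well defined. Each $\iota_u$ is injective, so a syllable $s_i\neq 1$ has $\iota_{u_i}(s_i) \neq 1$. Thus $\iota_{u_1}(s_1)\cdots\iota_{u_n}(s_n)$ is again graphically reduced in $\Gamma\mathcal{H}$: no merging or cancellation is available, because which syllables can be brought next to each other depends only on the vertices $u_i$ and $\Gamma$. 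Proposition~\ref{prop:NormalForm} then gives injectivity of $\Phi$. I would not assume $\iota_u(1)=1$; after translating each $\iota_u$ we may assume it, at the cost of an additive constant.

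For the metric estimates, the key step is a length formula for graph products. For a graphically reduced word $g=s_1\cdots s_n$, I claim that
$$|g|_{\Gamma\mathcal{G}} = \sum_i |s_i|_{S_{u_i}}.$$
The inequality $\leq$ is clear. For $\geq$, take a geodesic word in the generators, group maximal consecutive letters from the same vertex-group into syllables, and reduce by cancellation, merging and shuffling. Merging can only decrease the total vertex-group length, by the triangle inequality in $G_u$, and the result is a reduced word for $g$. By Proposition~\ref{prop:NormalForm} it agrees with $s_1\cdots s_n$ up to shuffles, so its syllables are the same multiset of pairs $(u_i,s_i)$. This argument is the main point to check carefully. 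It relies on the uniqueness of the normal form up to shuffles preserving the syllables themselves, which Proposition~\ref{prop:NormalForm} provides.

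With the formula, $|\Phi(g)| = \sum |\iota_{u_i}(s_i)|$ lies between $L^{-1}|g|$ and $L|g|$, since $|\iota_u(s)|_{T_u}$ is comparable to $|s|_{S_u}$ once $\iota_u(1)=1$. To compare distances, note that $\Phi$ is not a homomorphism, so I control $d(\Phi(g),\Phi(h))$ directly. Write $g^{-1}h$ in reduced form. Equivalently, take reduced words for $g$ and $h$ with a common prefix $p$: $g = p\,a$ and $h = p\,b$, where $a^{-1}b$ is reduced up to the merging of at most one syllable per vertex in a clique of ``last letters'' of $a$ and $b$. This is the standard prefix/suffix decomposition in graph products, obtained by cancelling a maximal common prefix.

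Then $\Phi(g)=\Phi(p)\Phi(a)$ and $\Phi(h)=\Phi(p)\Phi(b)$. Consequently $d(\Phi(g),\Phi(h)) = |\Phi(a)^{-1}\Phi(b)|$. The merged syllables pair up as $\iota_u(x)^{-1}\iota_u(y)$ against $x^{-1}y$, and these are comparable by the biLipschitz property of $\iota_u$. The other syllables are compared by the length formula. This yields the bounds $L^{-1}d(g,h) \leq d(\Phi(g),\Phi(h)) \leq L\,d(g,h)$, up to the additive constants coming from the normalisation, which can be absorbed.

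I expect the main obstacle to be the bookkeeping in this last step, that is, stating cleanly which syllables merge in $a^{-1}b$. If this becomes messy, an easier route is available. Prove only that $\Phi$ is Lipschitz on neighbours, checking $g$ versus $gs$ with a single generator $s$; this reduces to a single syllable change at one vertex $u$. The lower bound then follows from the length formula applied to the analogous map for $\iota_u^{-1}$ on images, or from the symmetric argument.
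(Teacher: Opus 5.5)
Your proposal is correct and is essentially the paper's argument: you use the same map $\Phi$ on graphically reduced words, well defined by Proposition~\ref{prop:NormalForm}, together with the common-prefix/clique decomposition of a pair (the paper's Lemma~\ref{lem:NFmedian}, which the paper derives from median triangles in $\mathrm{QM}(\Gamma,\mathcal{G})$ rather than by cancelling a maximal-length common prefix as you do) and the additivity of word length over syllables, which the paper uses implicitly and you justify. Two minor remarks: replacing $\iota_u$ by $\iota_u(1)^{-1}\iota_u$ is composition with an isometry of $H_u$, so no additive constant appears at all; and your fallback would not give the lower bound, since the would-be inverse is only defined on the image of $\Phi$, which is not connected, so checking neighbours says nothing there. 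Your main route does not need that fallback.
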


\noindent
The proof of our proposition will be based on the following technical lemma:

\begin{lemma}\label{lem:NFmedian}
Let $\Gamma$ be a graph and $\mathcal{G}$ a collection of groups indexed by $V(\Gamma)$. Any two elements $x,y \in \Gamma \mathcal{G}$ can be represented respectively as graphically reduced words $p a_1 \cdots a_n r$ and $p b_1 \cdots b_n s$ such that $s^{-1}(a_1b_1^{-1}) \cdots (a_nb_n^{-1}) r$ is a graphically reduced word representing $y^{-1}x$. 
\end{lemma}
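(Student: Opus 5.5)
The plan is to argue by induction on the sum of the syllable lengths of $x$ and $y$, working directly with graphically reduced words and Proposition~\ref{prop:NormalForm}. The base case, where $x$ or $y$ is trivial, is immediate. For instance, if $y=1$ we take $p,b_i,s$ empty, $n=0$, and $r$ a reduced word for $x$.

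The intended construction is as follows. Fix graphically reduced words $w_x$ and $w_y$ for $x$ and $y$. Let $p$ be a maximal common prefix, defined as the longest word such that $w_x$ and $w_y$ can both be shuffled to begin with $p$. Equivalently, we repeatedly peel off a syllable $c$ that can be shuffled to the front of both words and is literally equal in both. Write $w_x = p\,x'$ and $w_y = p\,y'$; then $y^{-1}x = (y')^{-1}x'$, so we may assume $p$ is empty. Next, collect the vertices $u$ such that both $x'$ and $y'$ admit a first syllable in $G_u$. Call these $a_u$ and $b_u$; maximality of $p$ forces $a_u \neq b_u$. The set of such $u$ must be pairwise adjacent, because two first syllables of the same reduced word lying in different vertex-groups commute, and hence their vertices are adjacent. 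So we may bring all of them to the front simultaneously, writing $x' = a_1\cdots a_n r$ and $y' = b_1\cdots b_n s$ with $a_i, b_i \in G_{u_i}$. Then
$$y^{-1}x = s^{-1} b_n^{-1}\cdots b_1^{-1} a_1 \cdots a_n r = s^{-1}(b_1^{-1}a_1)\cdots(b_n^{-1}a_n) r,$$
using that the $u_i$ pairwise commute. The syllable $b_i^{-1}a_i$ (equal to $a_ib_i^{-1}$ up to the stated convention, which may be adjusted by swapping roles) is nontrivial.

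It remains to check that $s^{-1}(a_1b_1^{-1})\cdots(a_nb_n^{-1})r$ is graphically reduced. By the standard reducedness criterion for graph products, a word is reduced iff no two syllables from the same vertex-group can be shuffled adjacent, so this amounts to a pairing argument. Within $r$, within $s^{-1}$, and among the middle syllables nothing cancels, since these come from reduced words and distinct vertices. A middle syllable in $G_{u_i}$ cannot merge with a syllable of $r$, because that syllable would then shuffle to the front of $a_i r$ next to $a_i$, contradicting reducedness of $x'$. The same holds symmetrically for $s$.

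The main obstacle is a syllable $c\in G_v$ of $r$ meeting a syllable $d\in G_v$ of $s^{-1}$ across the middle block. Then $c$ shuffles to the front of $r$ past everything, and it commutes with all the $a_i$, being adjacent to each $u_i$ since it passed the middle block, with $v\neq u_i$ by the previous step. Hence $c$ is a first syllable of $x'$, and likewise $d^{-1}$ is a first syllable of $y'$, both in $G_v$. By maximality in the choice of the $u_i$, $v$ would then be one of the $u_i$, a contradiction. Making these shuffle manipulations rigorous, in particular the claim that "first syllables" are well-defined up to shuffling by Proposition~\ref{prop:NormalForm}, is the step requiring the most care. I would first establish the lemma that the set of possible first syllables of a reduced word is well-defined and pairwise commuting.
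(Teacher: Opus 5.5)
Your proof is correct, and it takes a genuinely different route from the paper. The paper never manipulates words directly. It uses that $\mathrm{QM}(\Gamma,\mathcal{G})$ is quasi-median and takes an equilateral (quasi-median) triangle $p,pq_x,pq_y$ for $1,x,y$. It then reads the three distance equalities as graphically reduced words, because geodesics in $\mathrm{QM}(\Gamma,\mathcal{G})$ are exactly such words. The middle block comes from the fact that $q_x,q_y\in\langle\Lambda\rangle$ for a complete subgraph $\Lambda$. Your greedily peeled common prefix is the algebraic counterpart of the vertex $p$, and your set $\{u_1,\dots,u_n\}$ of vertices where both $x'$ and $y'$ have a head plays the role of $\Lambda$.

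The paper's route is short and conceptual, but it imports four results from \cite{QM} and compresses the final verification into ``since the triangle is equilateral''. Yours is self-contained and algorithmic, needing only Green's normal form and the standard reducedness criterion. Your three-case check shows exactly where each hypothesis enters: the absence of a common head makes the middle syllables nontrivial, and the maximality of $\{u_i\}$ prevents $r$ from merging with $s^{-1}$.

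The foundational point you flag is easy. A shuffle never changes the relative order of two syllables whose vertices are equal or non-adjacent. This immediately shows that the head of a reduced word in a given $G_u$ is well defined, that heads lie in pairwise adjacent vertex-groups, and that they can be brought to the front simultaneously. The induction on length is unnecessary, as is the claim that greedy peeling yields the \emph{longest} common prefix. All you use is that after peeling, $x'$ and $y'$ have no common head.

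One correction: the middle syllables you obtain, $b_i^{-1}a_i$, are the right ones. They are also what the paper's proof produces and what the proof of Proposition~\ref{prop:BiLipEmbed} uses. They are not ``equal to $a_ib_i^{-1}$ up to convention'', and swapping the roles of $x$ and $y$ does not turn one into the other. For instance, take $\Gamma$ a single vertex labelled by a non-abelian group and $x=a$, $y=b$ non-commuting; then no decomposition with middle syllable $a_1b_1^{-1}$ exists. The $(a_ib_i^{-1})$ in the statement is a slip that is harmless only for abelian vertex-groups. Just state your conclusion with $(b_1^{-1}a_1)\cdots(b_n^{-1}a_n)$.
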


\begin{proof}
We prove our lemma by using the geometric perspective given by \cite{QM}. We know from \cite[Proposition~8.2]{QM} that the Cayley graph
$$\mathrm{QM}(\Gamma, \mathcal{G}):= \mathrm{Cayl} \left( \Gamma \mathcal{G}, \bigcup\limits_{G \in \mathcal{G}} G \right)$$
is quasi-median, so it follows from \cite[Proposition~2.84]{QM} that there exist an equilateral triangle $p,pq_x,pq_y \in \Gamma \mathcal{G}$ such that equalities
$$\left\{ \begin{array}{l} d(1,x)=d(1,p)+d(p,pq_x) + d(pq_x,x) \\ d(1,y) = d(1,p)+d(p,pq_y)+d(pq_y,y) \\ d(x,y) = d(x,pq_x)+d(pq_x,pq_y)+d(pq_y,y) \end{array} \right.$$
hold in $\mathrm{QM}(\Gamma, \mathcal{G})$. Since geodesics in $\mathrm{QM}(\Gamma, \mathcal{G})$ are given by graphically reduced words \cite[Lemma~8.3]{QM}, the first (resp.\ second) equality shows that $x$ (resp.\ $y$) can be written as a graphically reduced word $pq_x r$ (resp. $pq_ys$). Then, the third equality shows that $y^{-1}x$ can be written as a graphically reduced word $s^{-1}qr$ where $q$ is a graphically reduced word representing $q_y^{-1}q_x$. But we know from \cite[Proposition~8.2 and Corollary~8.7]{QM} that $q_x$ and $q_y$ belong to $\langle \Lambda \rangle$ for some complete subgraph $\Lambda \leq \Gamma$, so we can write $q_x$ and $q_y$ respectively as graphically reduced words $a_1 \cdots a_n$ and $b_1 \cdots b_m$ of pairwise commuting syllables. Notice that, since our triangle $p,pq_x,pq_y$ is equilateral, necessarily $n=m$ and $(b_1^{-1}a_1) \cdots (b_n^{-1}a_n)$ is graphically reduced. The desired conclusion follows.  
\end{proof}

\begin{proof}[Proof of Proposition~\ref{prop:BiLipEmbed}.]
For convenience, let $\iota : \bigcup \mathcal{G} \hookrightarrow \bigcup \mathcal{H}$ denote the map that restricts to $\iota_u$ on each $G_u$. Given an element $g \in \Gamma \mathcal{F}$, we write $g$ as a graphically reduced word $s_1 \cdots s_n$ and we define $\Phi(g):= \iota(s_1) \cdots \iota(s_n)$. Notice that, as a consequence of Proposition~\ref{prop:NormalForm}, the element of $\Gamma \mathcal{H}$ that $\Phi(g)$ defines does not depend on the choice of the graphically reduced word representing $g$. We claim that the map $\Phi : \Gamma \mathcal{G} \to \Gamma \mathcal{H}$ is a biLipschitz embedding.

\medskip \noindent
So let $x,y \in \Gamma \mathcal{G}$ be two elements. Write $x$ and $y$ respectively as graphically reduced words $pa_1 \cdots a_n r$ and $p b_1 \cdots b_n s$ as given by Lemma~\ref{lem:NFmedian}. Let $r_1 \dots r_k$ (resp.\ $s_1 \cdots s_\ell$) be a graphically reduced word representing $r$ (resp.\ $s$). On the one hand, 
$$d(x,y) = \sum\limits_{i=1}^\ell \|s_i\| + \sum\limits_{i=1}^n \| b_i^{-1}a_i\| + \sum\limits_{i=1}^k \|r_i\|,$$
where $\|\cdot\|$ denotes the word-length of the syllable under consideration inside the vertex-group that contains it. On the other hand, 
$$d(\Phi(x),\Phi(y)) = \sum\limits_{i=1}^\ell \|\iota(s_i)\| + \sum\limits_{i=1}^n \| \iota(b_i)^{-1}\iota(a_i)\| + \sum\limits_{i=1}^k \|\iota(r_i)\|,$$
hence
$$d(\Phi(x),\Phi(y)) \geq \frac{1}{L} \left( \sum\limits_{i=1}^\ell \|s_i\| + \sum\limits_{i=1}^n \| b_i^{-1}a_i\| + \sum\limits_{i=1}^k \|r_i\| \right) = \frac{1}{L} d(x,y)$$
and
$$d(\Phi(x), \Phi(y)) \leq L \left( \sum\limits_{i=1}^\ell \|s_i\| + \sum\limits_{i=1}^n \| b_i^{-1}a_i\| + \sum\limits_{i=1}^k \|r_i\| \right)=L d(x,y),$$
where $L>0$ is a constant such that each $\iota_u$ is $L$-biLipschitz. 
\end{proof}

\begin{proof}[Proof of Proposition~\ref{prop:NotVirtSurface}.]
Decompose $\Gamma$ as a join $\Gamma_0 \ast \Gamma_1 \ast \cdots \ast \Gamma_n$ where $\Gamma_0$ is a complete graph and where $\Gamma_1, \ldots, \Gamma_n$ are not joins and each contains at least two vertices. Notice that, for every $1 \leq i \leq n$, $\langle \Gamma_i \rangle$ is infinite since $\Gamma_i$ is not complete and contains at least two vertices. Therefore, if $n \geq 2$, then $\Gamma \mathcal{G}$ contains a product of two infinite groups and consequently cannot be hyperbolic (and a fortiori not a virtual surface group). If $n=0$, then $\Gamma= \Gamma_0$ is complete and $\Gamma \mathcal{G}$ coincides with the product of its vertex-groups. In order to be a virtual surface group, again because a virtual surface group (and more generally any hyperbolic group) does not contain a product of two infinite groups, exactly one vertex of $\Gamma$ must be labelled by a virtual surface group and all the other vertices must be labelled by finite groups. It remains to consider the case $n=1$. Since $\Gamma \mathcal{G} = \langle \Gamma_0 \rangle \oplus \langle \Gamma_1 \rangle$ with $\langle \Gamma_1 \rangle$ infinite, necessarily $\langle \Gamma_0 \rangle$ must be finite, i.e.\ all the vertices of $\Gamma_0$ must be labelled by finite groups. Notice that, then, $\Gamma \mathcal{G}$ is a virtual surface group if and only if $\langle \Gamma_1 \rangle$ is virtually a surface group.

\medskip \noindent
Thus, in order to conclude the proof of our proposition, it remains to verify that, if $\Gamma$ is not a join and contains at least two vertices, then $\Gamma \mathcal{G}$ is a virtual surface group if and only if $\Gamma$ is a cycle of length $\geq 5$ all of whose vertices are labelled by $\mathbb{Z}_2$. 

\medskip \noindent
If $\Gamma$ is chordal, i.e.\ it does not contain an induced cycle of length $\geq 4$, then we know from \cite{MR130190} that it contains a separating complete subgraph, say $\Lambda \leq \Gamma$. We choose $\Lambda$ with the least number of vertices. If all the vertices of $\Lambda$ are labelled by finite groups, then $\Gamma \mathcal{G}$ splits over a finite subgroup (namely, $\langle \Lambda \rangle$), which implies that $\Gamma \mathcal{G}$ is multi-ended, and consequently not a virtual surface group. So there exists $u \in V(\Lambda)$ such that $\langle u \rangle$ is infinite. If $\mathrm{link}(u)$ is not complete, i.e.\ there exist two non-adjacent vertices $v,w$ adjacent to $u$, then $\Gamma \mathcal{G}$ cannot be hyperbolic (and a fortiori not a virtual surface group) since it contains a product of two infinite groups, namely $\langle u,v,w \rangle \simeq \langle u \rangle \oplus (\langle v \rangle \ast \langle w \rangle)$. So $\mathrm{link}(u)$ must be complete. But then $\Lambda \backslash \{u\}$ must also separate $\Gamma$, contradicting the minimality of $\Lambda$. Thus, we have proved that $\Gamma$ cannot be chordal.

\medskip \noindent
In other words, $\Gamma$ must contain a cycle of length $\geq 4$. If $(r,s,t,u)$ is a cycle of length $4$ in $\Gamma$, then $\Gamma \mathcal{G}$ cannot be hyperbolic (and a fortiori not a virtual surface group) since it contains a product of two infinite groups, namely $\langle r,s,t,u \rangle = (\langle r \rangle \ast \langle t \rangle) \oplus (\langle s \rangle \ast \langle u \rangle)$.  Thus, $\Gamma$ must contain a cycle $\Xi$ of length $\geq 5$. 

\medskip \noindent
First, assume that all the vertices of $\Xi$ are labelled by $\mathbb{Z}_2$. Then, $\langle \Xi \rangle$ is a virtual surface group. Since infinite-index subgroups in surface groups are free, it follows that $\Gamma \mathcal{G}$ is a virtual surface group if and only if $\langle \Xi \rangle$ has finite index in $\Gamma \mathcal{G}$. According to Lemma~\ref{lem:GPwhenFI}, and because $\Gamma$ is not a join, this amounts to saying that $\Gamma= \Xi$. In other words, $\Gamma$ is a cycle of length $\geq 5$ all of whose vertices are labelled by $\mathbb{Z}_2$.

\medskip \noindent
Next, assume that at least one vertex of $\Xi$ is labelled by a group of size $\geq 3$. As a consequence of Proposition~\ref{prop:BiLipEmbed}, there exists a quasi-isometric embedding $\Xi \mathcal{H} \to \Gamma \mathcal{G}$ where $\mathcal{H}$ is a collection with one $\mathbb{Z}_3$ while all its other groups are $\mathbb{Z}_2$. In order to conclude the proof of our proposition, we need to verify that $\Gamma \mathcal{G}$ is not a virtual surface group, which is a consequence of the following observation:

\begin{claim}
The group $\Xi \mathcal{H}$ is a one-ended hyperbolic group that is not virtually a surface group.
\end{claim}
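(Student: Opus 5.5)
Here $\Xi$ is a cycle $(v_1,\dots,v_m)$ with $m\geq 5$, where $v_1$ is labelled by $\mathbb{Z}_3$ and all other vertices by $\mathbb{Z}_2$. The plan is to show hyperbolicity, then one-endedness, and finally to rule out being virtually a surface group through an Euler characteristic computation.

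\emph{Hyperbolicity.} This follows directly from Theorem~\ref{thm:GPhyp}. All vertex-groups are finite, so the only condition left to check is that $\Xi$ is $\square$-free, which holds because $m\geq 5$.

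\emph{One-endedness.} Since $\Xi\mathcal{H}$ is infinite (two non-adjacent vertices already generate a free product of finite groups), we use Stallings' theorem: it suffices to show that $\Xi\mathcal{H}$ does not split over a finite subgroup. I would use the standard action of the graph product on its Davis complex, or equivalently exploit that every finite subgroup is conjugate into $\langle\Lambda\rangle$ for some complete subgraph $\Lambda$. The cleanest route seems to be the known criterion that a graph product of finite groups is one-ended if and only if $\Gamma$ is connected, not complete, and has no separating complete subgraph. In a cycle of length $\geq 5$, the complete subgraphs are vertices and edges, and none of them separates. If I cannot quote this criterion cleanly, an alternative is to use Proposition~\ref{prop:BiLipEmbed} to quasi-isometrically embed the right-angled Coxeter group $\Xi\mathcal{H}'$ (all labels $\mathbb{Z}_2$), a virtual surface group. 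Then I would argue that $\Xi\mathcal{H}$ is covered by coarsely connected overlapping copies of such planes, arranged along the Davis complex, which prevents a bounded set from separating. Of the two, quoting the graph-product criterion is preferable.

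\emph{Not virtually a surface group.} I would compute the rational Euler characteristic using the formula for graph products of finite groups:
$$\chi(\Gamma\mathcal{G})=\sum_{\Delta}\prod_{u\in\Delta}\Bigl(\frac{1}{|G_u|}-1\Bigr),$$
where the sum runs over the complete subgraphs $\Delta$, including the empty one. For $\Xi\mathcal{H}$ the vertex contributions are $-2/3$ for $v_1$ and $-1/2$ for each of the other $m-1$ vertices. The edge contributions are $(-2/3)(-1/2)=1/3$ for each of the two edges at $v_1$ and $1/4$ for each of the other $m-2$ edges. This gives
$$\chi = 1-\tfrac23-\tfrac{m-1}{2}+\tfrac23+\tfrac{m-2}{4}=\tfrac{4-m}{4}.$$
This is a sanity check only, since $\chi$ alone cannot decide the question: virtual surface groups realise every negative rational value.

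The real distinguishing argument should come from the boundary or the Davis complex. The Davis complex of $\Xi\mathcal{H}$ is a $2$-complex whose edges dual to the $\mathbb{Z}_3$-vertex lie in three $2$-cells (one for each element of $\mathbb{Z}_3$) rather than two. The plan is then to show that $\partial(\Xi\mathcal{H})$ is not a circle. Concretely, the Davis complex is a CAT(0) (indeed CAT($-1$) after hyperbolising the $m$-gons, $m\geq5$) $2$-complex with a branching edge, and such a branching edge produces a point of the boundary with a neighbourhood that is a tripod-like set, which cannot be embedded in $S^1$. Alternatively, the branching structure exhibits three half-planes glued along a geodesic line, whose boundary contains a theta-graph. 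Either way $\partial(\Xi\mathcal{H})\neq S^1$, and since a one-ended hyperbolic group is virtually a surface group exactly when its boundary is a circle (Tukia, Gabai, Casson--Jungreis), we conclude.

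The main obstacle is making rigorous that the branching in the Davis complex forces a non-circle boundary. The first thing I would try is to find three geodesic rays in the $\mathbb{Z}_3$-branched configuration that produce a theta-graph in $\partial(\Xi\mathcal{H})$.
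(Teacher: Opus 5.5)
Your hyperbolicity and one-endedness steps are the paper's: Theorem~\ref{thm:GPhyp}, and the criterion of Corollary~\ref{cor:GPends}, since no vertex or edge separates a cycle of length $\geq 5$. Your Euler characteristic is correct but, as you suspect, useless. The value $1-\frac{m}{4}$ is also the Euler characteristic of the right-angled Coxeter group on the $m$-cycle, which is a virtual surface group.

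\paragraph{The third step.} Here you take a different route, and the idea is sound, but the decisive step is only announced. Two of the mechanisms you suggest would not deliver it. Three geodesic rays give three boundary points, not a theta-graph. And no point of $\partial(\Xi\mathcal H)$ has a tripod neighbourhood, because far translates of the branching configuration put small circles in every open set. The obstruction is global and comes from a single wall.

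Work in the polygonal Davis complex $X$: one right-angled hyperbolic $m$-gon per group element, with the side of type $v_i$ shared by the $|G_{v_i}|$ chambers of a coset of $G_{v_i}$.
\begin{itemize}
\item At its $\{v_1,v_2\}$-corner, an edge of type $v_1$ has exactly $|G_{v_2}|-1=1$ geodesic continuation, and likewise at the other end. So, precisely because $v_2,v_m$ carry $\mathbb{Z}_2$, these edges form a geodesic line $\ell$.
\item $\ell$ corresponds to a vertex of valence $[\langle \mathrm{star}(v_1)\rangle:\langle v_2,v_m\rangle]=3$ in the Bass--Serre tree of $\Xi\mathcal H=\langle \Xi\setminus v_1\rangle\ast_{\langle v_2,v_m\rangle}\langle \mathrm{star}(v_1)\rangle$. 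Hence $X\setminus\ell$ has three components.
\item Their closures $\overline{X_i}$ are convex and pairwise intersect in $\ell$. They are cyclically permuted by a conjugate of $G_{v_1}$ fixing $\ell$.
\end{itemize}
Put $\Lambda_i=\partial\overline{X_i}$. A ray from $p\in\ell$ cannot leave $\ell$ and return, so $\partial X=\Lambda_1\cup\Lambda_2\cup\Lambda_3$. The ray from $p$ to a point of $\Lambda_i\cap\Lambda_j$ lies in $\overline{X_i}\cap\overline{X_j}=\ell$, so $\Lambda_i\cap\Lambda_j=\partial\ell$. Finally $\Lambda_i\neq\partial\ell$ by transitivity, since $\partial X$ is not two points.

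Thus $\partial X\setminus\partial\ell$ is a union of three disjoint non-empty relatively clopen sets. This is impossible in a circle minus two points. Alternatively, apartments through chambers on two sides of $\ell$ give three half-planes bounded by $\ell$, hence a theta-graph in $\partial X$. You only need the easy implication ``virtual surface group $\Rightarrow$ boundary $S^1$'', not Tukia--Gabai--Casson--Jungreis.

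\paragraph{Comparison with the paper.} The paper argues algebraically. $\Xi\mathcal H$ contains the right-angled Coxeter group $\Theta(2)$; one can take the subgroup generated by the three $G_{v_1}$-conjugates of $\langle v_2,\dots,v_m\rangle$, i.e.\ three copies of the path glued along $v_2,v_m$. Two of the three paths form a cycle whose Coxeter group is a non-free virtual surface subgroup of infinite index, which cannot occur in a virtual surface group. This is the algebraic shadow of your three sheets along $\ell$: two sheets form an apartment.

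The paper's proof is shorter, but it rests on the cited embedding lemma and on the subgroup structure of surface groups. Yours needs the wall and building structure of the Davis complex, but it is self-contained. In its theta-graph form it also proves more: $\partial(\Xi\mathcal H)$ does not even embed in $S^1$. That makes the subsequent combination of the claim with the quasi-isometric embedding $\Xi\mathcal H\to\Gamma\mathcal G$ of Proposition~\ref{prop:BiLipEmbed} immediate.
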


\noindent
We know from Theorem~\ref{thm:GPhyp} and \cite{GPends} (see also Corollary~\ref{cor:GPends} below) that $\Xi \mathcal{H}$ is a one-ended hyperbolic group. It follows from \cite[Lemma~21]{MR3010817} that $\Xi \mathcal{H}$ contains a subgroup isomorphic to the graph product $\Theta(2)$ of cyclic groups $\mathbb{Z}_2$ (i.e.\ a right-angled Coxeter group) where $\Theta$ is the union of three paths of length $(\mathrm{length}(\Xi)-2)$ glued along their endpoints. Two such paths yield a cycle $\Omega$ of length $2 ( \mathrm{length}(\Xi)-2) \geq 6$. Thus, $\Theta(2)$ contains a virtual surface subgroup, namely $\langle \Omega \rangle$, which has infinite index according to Lemma~\ref{lem:GPwhenFI}. Since infinite-index subgroups in surface groups are free, it follows that $\Theta(2)$, and a fortiori $\Xi \mathcal{H}$, is not a virtual surface group.
\end{proof}

\subsection{Splittings}\label{section:GPsplit}

\noindent
The last step towards the proof of Theorem~\ref{thm:GPsubsep} is to understand when a graph product of finite groups splits over a virtually cyclic group. Our characterisation will follow from the our next observation, which follows the lines of \cite{MR3728497}. Given a group $G$ and a collection of subgroups $\mathcal{H}$, we refer to a \emph{splitting of $G$ relative to $\mathcal{H}$} as a splitting of $G$ for which every subgroup in $\mathcal{H}$ is elliptic in the corresponding Bass-Serre tree (or equivalently, is contained in a conjugate of a factor of the decomposition). 

\begin{prop}\label{prop:RelativeSplitting}
Let $\Gamma$ be a finite graph, $\mathcal{G}$ a collection of groups indexed by $V(\Gamma)$, and $H \leq \Gamma \mathcal{G}$ a subgroup. If $\Gamma \mathcal{G}$ splits over $H$ relative to $\mathcal{G}$, then there exists a separating subgraph $\Lambda \leq \Gamma$ such that $\langle \Lambda \rangle$ has a conjugate contained in $H$. 
\end{prop}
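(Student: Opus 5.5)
We work with the Bass--Serre tree $T$ of the given splitting. Since $\Gamma\mathcal{G}$ splits over $H$, there is an edge $e_0$ of $T$ with stabiliser $H$. The action is minimal (after passing to the minimal subtree) and has no global fixed point, and every vertex-group $G_u$ is elliptic. We adapt the strategy of \cite{MR3728497}: build from $T$ a decomposition of the vertex set of $\Gamma$ that detects separation.

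\emph{Step 1: commuting elliptic subgroups.} If $u,v$ are adjacent in $\Gamma$, then $G_u$ and $G_v$ commute and are both elliptic. Hence $\langle G_u,G_v\rangle = G_u\oplus G_v$ is elliptic. Indeed, if $\mathrm{Fix}(G_u)$ and $\mathrm{Fix}(G_v)$ were disjoint, then $G_v$ would preserve $\mathrm{Fix}(G_u)$, and so would fix the projection of $\mathrm{Fix}(G_v)$ onto it. A symmetric argument then produces a common fixed point. More generally, for every complete subgraph $\Lambda$, the subgroup $\langle \Lambda\rangle$ is elliptic.

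\emph{Step 2: the subgraph $\Lambda$.} Consider the edge $e_0$, which separates $T$ into two half-trees $T_1$ and $T_2$. For each vertex $u$, choose the fixed point set $\mathrm{Fix}(G_u)$, which is a nonempty subtree. Let
\begin{itemize}
\item $\Lambda$ be the set of vertices $u$ such that $G_u$ fixes $e_0$, that is, $G_u\le H$;
\item $V_i$ be the set of vertices $u\notin\Lambda$ whose fixed subtree meets $T_i$ but not $e_0$. Since fixed sets are convex, these lie entirely in $T_i$.
\end{itemize}
After conjugating $e_0$ appropriately, every vertex falls into one of $\Lambda$, $V_1$ or $V_2$; the appropriate choice is the main obstacle and is handled in Step 3. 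If $u\in V_1$ and $v\in V_2$ were adjacent, then by Step 1 the subgroup $\langle u,v\rangle$ would fix a point $x$. Convexity then forces $\mathrm{Fix}(G_u)\cap\mathrm{Fix}(G_v)$ to be nonempty. On the other hand, this intersection lies in $T_1\cap T_2=\emptyset$, a contradiction. So there are no edges between $V_1$ and $V_2$, and $\Lambda$ separates $\Gamma$ provided both $V_1$ and $V_2$ are nonempty. Finally, $\langle\Lambda\rangle\le H$ by construction.

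\emph{Step 3: nonemptiness, the main obstacle.} We must ensure that $V_1$ and $V_2$ are both nonempty for a suitable translate $ge_0$. The plan is as follows. Since $\Gamma\mathcal{G}$ is generated by the $G_u$ and acts without a global fixed point, the fixed subtrees of the vertex groups cannot all contain a common point. Moreover, minimality together with Helly's property for finitely many subtrees implies that the union of these subtrees does not lie in a single half-tree. I would argue as follows.
\begin{itemize}
\item If for every translate $ge_0$ all the $\mathrm{Fix}(G_u)$ meet the closure of a single side, then one can push toward that side repeatedly.
\item Either this process produces an edge where both sides are occupied, or the whole generating set fixes a common point or a common end.
\item Since the groups $G_u$ are elliptic and finitely many generate, fixing a common end forces a global fixed point. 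This contradicts the nontriviality of the splitting.
\end{itemize}

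Replacing $e_0$ by the chosen translate $ge_0$ replaces $H$ by the conjugate $gHg^{-1}$. This is why the statement only asserts that a conjugate of $\langle\Lambda\rangle$ lies in $H$. An alternative for Step 3, which I would try if the pushing argument gets messy, is to take $e_0$ in the finite convex hull of the chosen fixed points of the $G_u$ and intersected by the subtree spanned by them. Such an edge exists, because otherwise this hull would be a single vertex fixed by the whole group.
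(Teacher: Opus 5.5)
Your Steps 1 and 2 are correct and use the same separation mechanism as the paper. Adjacent vertex-groups have intersecting fixed subtrees, so adjacent vertices outside $\Lambda$ have their fixed subtrees on the same side of $e_0$. Also $\langle \Lambda \rangle \leq \mathrm{Stab}(e_0)$, which is a conjugate of $H$.

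The gap is in Step 3, which is where the real content of the proof lies. The pushing argument is only a sketch: it does not say what is pushed, nor why the process terminates or converges to a common end. The fallback you propose is false as stated. An edge $e_0$ in the hull of arbitrarily chosen points $x_u \in \mathrm{Fix}(G_u)$ does separate two chosen points. However, the corresponding fixed subtrees may still contain $e_0$, which puts those vertices in $\Lambda$ rather than in $V_1$ or $V_2$. Here is a concrete counterexample.
\begin{itemize}
\item Let $\Gamma$ be the path with vertices $u,w,v$ and edges $\{u,w\},\{w,v\}$, and consider the splitting $(G_u \times G_w) \ast_{G_w} (G_v \times G_w)$.
\item $G_w$ is normal and acts trivially on the Bass--Serre tree, while $\mathrm{Fix}(G_u)=\{x_u\}$ and $\mathrm{Fix}(G_v)=\{x_v\}$ are adjacent vertices.
\item Choose $x_w$ so that $[x_u,x_w]$ avoids $x_v$. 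Then every edge of $[x_u,x_w]$ lies in your hull and gives $\Lambda=\{w\}$, $V_1=\{u,v\}$ and $V_2=\emptyset$.
\end{itemize}

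The missing idea is the one that makes Step 3 short, and it is exactly the paper's argument. The finitely many subtrees $\mathrm{Fix}(G_u)$ cannot pairwise intersect. Otherwise, by the Helly property, they would share a point, and that point would be fixed by all of $\Gamma\mathcal{G}$. Hence there are $u,v$ with $\mathrm{Fix}(G_u)\cap \mathrm{Fix}(G_v)=\emptyset$. Any edge $e_0$ on the bridge between these two subtrees lies in neither of them, so $u\in V_1$ and $v\in V_2$ automatically. You do invoke Helly, but only for the weaker claim that the union of the fixed subtrees is not contained in one half-tree. That claim does not produce two fixed subtrees lying strictly on opposite sides of a single edge.

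Your hull idea can also be repaired. Choose the points $x_u$ so that their hull has the minimal number of edges. Suppose that, for some edge of this hull, every $G_u$ with $x_u$ on one side fixed that edge. Then you could move those $x_u$ to the endpoint of the edge on the opposite side and strictly shrink the hull, contradicting minimality.
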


\begin{proof}
We know that $\Gamma \mathcal{G}$ acts non-trivially on some tree $T$ with  edge-stabilisers conjugate to $H$ and with elliptic vertex-groups, namely the Bass-Serre tree given by our splitting. For every vertex $u \in V(\Gamma)$, consider the non-empty subtree $T_u:= \mathrm{Fix}(\langle u \rangle)$. If the $T_u$ pairwise intersect, then they must globally intersect. In other words, we find a point globally fixed by $\Gamma \mathcal{G}$, but we know that it is not the case. Consequently, there exist $u,v \in V(\Gamma)$ such that $T_u \cap T_v= \emptyset$. Fix an edge $e$ of the geodesic connecting $T_u$ and $T_v$ and let $\Lambda$ denote the subgraph of $\Gamma$ induced by the vertices $w \in V(\Gamma)$ for which $\langle w \rangle$ fixes $e$ (or equivalently, $e \in E(T_w)$). Since $T_a \cap T_b \neq \emptyset$ if $a,b \in V(\Gamma)$ are adjacent, necessarily $\Lambda$ is a non-empty subgraph that separates $u$ and $v$ in $\Gamma$. Since $\langle \Lambda \rangle$ is contained in $\mathrm{stab}(e)$, which is conjugate to $H$, the desired conclusion follows.
\end{proof}

\begin{cor}\label{cor:GPends}
Let $\Gamma$ be a finite graph and $\mathcal{G}$ a collection of finite groups indexed by $V(\Gamma)$. The graph product $\Gamma \mathcal{G}$ 
\begin{itemize}
    \item splits over a finite subgroup, i.e.\ is multi-ended, if and only if $\Gamma$ contains a separating complete subgraph;
    \item splits over a virtually cyclic subgroup if and only if it contains a separating subgraph that is a join $A \ast B$ between a complete graph $A$ and a graph $B$ that is either empty or two non-adjacent vertices labelled by $\mathbb{Z}_2$. 
\end{itemize}
\end{cor}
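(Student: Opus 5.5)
Both items follow the same pattern: the easy direction comes from the standard amalgam decomposition along a separating subgraph, and the hard direction comes from Proposition~\ref{prop:RelativeSplitting}.

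\emph{If direction.} Let $\Lambda \leq \Gamma$ be a separating subgraph, and write $\Gamma \setminus \Lambda = \Gamma_1 \sqcup \Gamma_2$ with no edges between $\Gamma_1$ and $\Gamma_2$, both non-empty. Then
\[
\Gamma\mathcal{G}=\langle \Gamma_1\cup\Lambda\rangle \ast_{\langle\Lambda\rangle} \langle\Gamma_2\cup\Lambda\rangle .
\]
This follows directly from the presentation, and the amalgam is non-trivial because each factor is a proper subgroup; properness is checked with Proposition~\ref{prop:NormalForm}. If $\Lambda$ is complete, then $\langle\Lambda\rangle$ is a direct sum of finite groups, hence finite. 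If $\Lambda=A\ast B$ with $A$ complete and $B$ two non-adjacent $\mathbb{Z}_2$-vertices, then $\langle\Lambda\rangle=\langle A\rangle\oplus D_\infty$, which is virtually cyclic. For the parenthetical in the first item, I use Stallings' theorem: splitting over a finite subgroup is equivalent to being multi-ended once the group is infinite. If the group is finite it is not multi-ended, and $\Gamma$ is complete by Proposition~\ref{prop:GPvirtCyclic}, so a complete graph has no separating subgraph and the statement is consistent.

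\emph{Only if direction.} Suppose $\Gamma\mathcal{G}$ splits over $H$, with $H$ finite (resp.\ virtually cyclic). Every vertex-group is finite, so it fixes a point of the Bass--Serre tree. Hence the splitting is automatically relative to $\mathcal{G}$. Proposition~\ref{prop:RelativeSplitting} then gives a separating subgraph $\Lambda$ such that a conjugate of $\langle\Lambda\rangle$ lies in $H$. It remains to describe graphs $\Lambda$ for which $\langle\Lambda\rangle$ is finite (resp.\ virtually cyclic). The subgroup $\langle\Lambda\rangle$ is itself the graph product $\Lambda\mathcal{G}_{|\Lambda}$, which I take as standard, for instance via Proposition~\ref{prop:NormalForm}.

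\emph{Key step.} $\Lambda\mathcal{G}_{|\Lambda}$ is finite exactly when $\Lambda$ is complete: two non-adjacent vertices would generate a free product of non-trivial groups, which is infinite. So in the finite case $\Lambda$ is a complete subgraph, as required. In the virtually cyclic case I apply Proposition~\ref{prop:GPvirtCyclic} to $\Lambda$ with its finite labels. Its second option requires an infinite label and is excluded. What remains is that $\Lambda$ is complete, which has the form $A\ast\emptyset$, or a join of a complete graph with two non-adjacent $\mathbb{Z}_2$-vertices.

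\emph{Main obstacle.} The only delicate point is the claim that vertex-groups are elliptic, which holds because finite groups acting on trees fix a point. I also need to confirm that the splitting in Proposition~\ref{prop:RelativeSplitting} is non-trivial, meaning it has no global fixed point. That is part of the definition of a splitting, so I expect everything else to be bookkeeping.
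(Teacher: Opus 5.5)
Your proposal is correct and follows essentially the same route as the paper. For the forward direction, both use Proposition~\ref{prop:RelativeSplitting} and then characterise finite and virtually cyclic $\langle\Lambda\rangle$ via Proposition~\ref{prop:GPvirtCyclic}; for the converse, both use the amalgam along the separating subgraph. You also make explicit two points the paper leaves implicit: the ellipticity of the finite vertex-groups, which makes every splitting relative to $\mathcal{G}$, and the finite-group edge case of the multi-ended equivalence.
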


\begin{proof}
If $\Gamma \mathcal{G}$ splits over a finite subgroup, then we know from Proposition~\ref{prop:RelativeSplitting} that $\Gamma$ contains a separating subgraph $\Lambda$ such that $\langle \Lambda \rangle$ is finite. Necessarily, $\Lambda$ must be a complete subgraph all of whose vertices are labelled by finite groups. Conversely, it is clear that, if we know that $\Gamma$ contains such a subgraph, then $\Gamma \mathcal{G}$ splits over a finite subgroup.

\medskip \noindent
If $\Gamma \mathcal{G}$ splits over a virtually cyclic group, then we know from Proposition~\ref{prop:RelativeSplitting} that $\Gamma$ contains a separating subgraph $\Lambda$ such that $\langle \Lambda \rangle$ is virtually cyclic. According to Proposition~\ref{prop:GPvirtCyclic}, $\Lambda$ is either a complete graph all of whose vertices are labelled by finite groups or a join between a complete graph all of whose vertices are labelled by finite groups and two non-adjacent vertices labelled by $\mathbb{Z}_2$. Conversely, it is clear that, if we know that $\Gamma$ contains such a subgraph, then $\Gamma \mathcal{G}$ splits over a virtually cyclic subgroup.
\end{proof}

\subsection{Proof of Theorem~\ref{thm:GPsubsep} and examples}\label{section:GPproof}

\noindent
We are finally ready to prove our main result about hyperbolic graph products of finite groups. 

\begin{proof}[Proof of Theorem~\ref{thm:GPsubsep}.]
If $\Gamma$ contains a separating subgraph $\Lambda=\Lambda_1 \ast \Lambda_2$ where $\Lambda_1$ is a complete graph all of whose vertices are labelled by finite groups and where $\Lambda_2$ is either empty or two non-adjacent vertices both labelled by $\mathbb{Z}_2$, then $\Gamma \mathcal{G}$ splits over $\langle \Lambda \rangle$, which is virtually cyclic. This implies that $\Gamma \mathcal{G}$ is coarsely separable by a family of subexponential growth.

\medskip \noindent
Conversely, assume that $\Gamma \mathcal{G}$ is coarsely separable by a family of subexponential growth. Notice that we know from Theorem~\ref{thm:GPhyp} that $\Gamma \mathcal{G}$ is hyperbolic. Moreover, Corollary~\ref{cor:GPends} shows that $\Gamma \mathcal{G}$ is multi-ended if and only if $\Gamma$ contains a separating subgraph all of whose vertices are labelled by finite groups; and Proposition~\ref{prop:NotVirtSurface} shows that $\Gamma \mathcal{G}$ is virtually a surface group if and only if $\Gamma$ decomposes as a join between a complete graph all of whose vertices are labelled by finite groups and a cycle of length $\geq 5$ all of whose vertices are labelled by $\mathbb{Z}_2$. In the latter case, notice that $\Gamma$ contains a separating subgraph that is a join between a complete graph all of whose vertices are labelled by finite groups and two non-adjacent vertices labelled by $\mathbb{Z}_2$. 

\medskip \noindent
From now on, assume that $\Gamma \mathcal{G}$ is one-ended and is not virtually a surface group. Then, we know from Theorem~\ref{thm:IntroHyp} that $\Gamma \mathcal{G}$ splits over a virtually cyclic group. Then, the desired conclusion if provided by Corollary~\ref{cor:GPends}. 
\end{proof}

\noindent
We conclude this section with a few concrete examples. First, as an immediate consequence of Theorem~\ref{thm:GPsubsep}:

\begin{cor}
Let $\Gamma$ be a finite $\square$-free graph. The right-angled Coxeter group $C(\Gamma)$ is coarsely separable by a family of subexponential growth if and only if $\Gamma$ contains a separating subgraph $A \ast B$ where $A$ is complete and where $B$ is either complete or consists of a pair of non-adjacent vertices.
\end{cor}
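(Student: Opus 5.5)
This is a specialisation of Theorem~\ref{thm:GPsubsep} to right-angled Coxeter groups. The plan is to apply Theorem~\ref{thm:GPsubsep} to the collection $\mathcal{G}$ in which every vertex-group is $\mathbb{Z}_2$. Then $\Gamma\mathcal{G}=C(\Gamma)$, and since $\Gamma$ is $\square$-free, the theorem applies directly. It says that $C(\Gamma)$ is coarsely separable by a family of subexponential growth if and only if $\Gamma$ contains a separating subgraph $A\ast B$ with $A$ complete and $B$ either empty or two non-adjacent vertices labelled by $\mathbb{Z}_2$. Every vertex carries the label $\mathbb{Z}_2$, so the labelling condition holds automatically. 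The criterion therefore reads: $B$ is either empty or a pair of non-adjacent vertices.

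The only remaining task is to reconcile ``$B$ empty'' with ``$B$ complete'' in the statement of the corollary. If $B$ is empty, then $A\ast B=A$, and $B$ is the empty complete graph, so this case is included in ``$B$ complete''. Conversely, if $B$ is complete, then $A\ast B$ is a join of two complete graphs and hence itself complete. Writing $A\ast B=A'\ast\emptyset$ with $A'=A\ast B$ complete puts it in the form required by Theorem~\ref{thm:GPsubsep}. The two families of separating subgraphs therefore coincide, and the corollary follows.

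No step presents a real obstacle. The one point to check is that the hypotheses of Theorem~\ref{thm:GPsubsep} (a finite $\square$-free graph with finite vertex-groups) are met, which they are by assumption.
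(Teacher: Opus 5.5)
Your proposal is correct and matches the paper, which states this corollary as an immediate consequence of Theorem~\ref{thm:GPsubsep} with every vertex-group equal to $\mathbb{Z}_2$. Your one extra step is the right bookkeeping: if $B$ is complete, then $A \ast B$ is itself complete, so this case reduces to the ``$B$ empty'' case of the theorem.
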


\noindent
Our next examples will play an important role in our forthcoming work \cite{SepRAAGs}, dedicated to coarse separation in right-angled Artin groups. They also include the so-called Bourdon groups, hence extending our previous result \cite[Theorem~1.3]{bensaid2024coarse}. 

\begin{ex}
Let $\Gamma$ be a be a cycle of length $\geq 5$ and $\mathcal{G}$ a collection of finite groups indexed by $V(\Gamma)$. The graph product $\Gamma \mathcal{G}$ is coarsely separable by a family of subexponential growth if and only if $\Gamma$ has two non-adjacent vertices that are both labelled by~$\mathbb{Z}_2$. 
\end{ex}

\noindent
Finally, let us illustrate the fact that Theorem~\ref{thm:GPsubsep} can have concrete applications to the problem of determining whether or not there exist coarse embeddings between finitely generated groups. 

\begin{ex}\label{ex:CoarseEmb}
Let $G_1$ and $G_2$ be the two graph products illustrated by Figure~\ref{GP}. There is no coarse embedding $G_1 \to G_2$. Indeed, as indicated on the figure, $G_2$ can be decomposed as an amalgamated product over an infinite dihedral group of two smaller graph products, which turn out to be virtually free according to \cite{MR2294621}. Because $G_1$ is not coarsely separable by a family of subexponential growth, then it follows from \cite[Theorem~4.1]{bensaid2024coarse} that every coarse embedding $G_1 \to G_2$ must have its image contained in a neighbourhood of one these virtually free factors, which which would imply that $G_1$ is a quasi-tree. 
\end{ex}
\begin{figure}
\begin{center}
\includegraphics[width=0.7\linewidth]{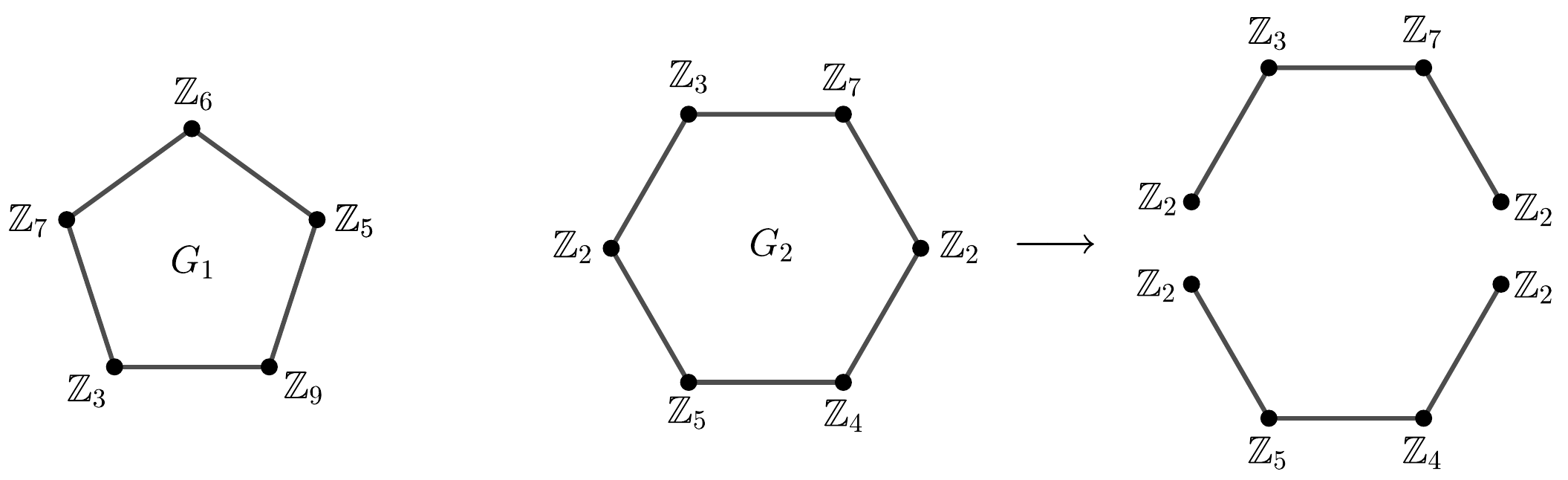}
\caption{The two graph products from Example~\ref{ex:CoarseEmb}.}
\label{GP}
\end{center}
\end{figure}

\addcontentsline{toc}{section}{References}

\bibliographystyle{alpha}
{\footnotesize\bibliography{CoarseSepHyp}}

\Address

\end{document}